\newtheorem{corollary}{Corollary}[section]
\newtheorem{lemma}[corollary]{Lemma}
\newtheorem{proposition}[corollary]{Proposition}
\newtheorem{theorem}[corollary]{Theorem}
\newcommand{\Prob} {{\bf P}}
\newcommand{\Z}{{\mathbb Z}}
\newcommand{\E}{{\bf E}}
\newcommand{\R}{{\mathbb{R}}}
\newcommand{\C}{{\mathbb C}}
\newcommand{\dist}{{\rm dist}}
\def \Im {{\rm Im}}
\def \Re {{\rm Re}}
\def \p {\partial}
\def \Half {{\mathbb H}}
\def \Disk {{\mathbb D}}
\def \hcap {{\rm hcap}}
\def \distsub {{\Upsilon}}
\def \F {{\cal F}}
\def  \G {{\cal G}}
\def \domain {{\cal D}}
\def \newtheta{\tilde \Theta}
\def \I {{\mathcal I}}
\def \dyadic {{\cal Q}}
\newenvironment{remark}[1][Remark]{\begin{trivlist}
\item[\hskip \labelsep {\bfseries #1}]}{\end{trivlist}}
\newenvironment{definition}[1][Definition]{\begin{trivlist}
\item[\hskip \labelsep {\bfseries #1}]}{\end{trivlist}}
\begin{document}

\title{The natural parametrization for the
Schramm-Loewner evolution}

\author{Gregory F. Lawler \thanks{Research supported by National
Science Foundation grant DMS-0734151.}\\
University of Chicago  \\ \\ \\
Scott Sheffield\thanks{Research supported by National Science
Foundation grants DMS-0403182, DMS-0645585 and
OISE-0730136.}\\Massachusetts Institute of Technology}

\maketitle

\begin{abstract}  The Schramm-Loewner evolution ($SLE_\kappa$)
is a candidate for the scaling limit of random curves arising in
two-dimensional critical phenomena.  When $\kappa < 8$, an instance
of $SLE_\kappa$ is a random planar curve with almost sure Hausdorff dimension $d = 1 + \kappa/8 < 2$.
This curve is conventionally parametrized by its half plane capacity, rather than by any measure
of its $d$-dimensional volume.

For $\kappa < 8$, we use a Doob-Meyer decomposition to construct the unique (under mild assumptions)
Markovian parametrization of $SLE_\kappa$ that transforms like a $d$-dimensional volume measure under conformal maps.
We prove that this parametrization is non-trivial (i.e., the curve is not entirely traversed
in zero time) for $\kappa < 4(7 - \sqrt{33}) = 5.021 \cdots$.
\end{abstract}

\section{Introduction}
\subsection{Overview}  \label{overviewsubsection} A number of measures on paths or clusters on two-dimensional
lattices arising from critical statistical mechanical
models are believed to exhibit some kind of conformal
invariance in the
scaling limit.
The Schramm-Loewner evolution ($SLE$ --- see Section \ref{SLEsubsec} for a definition) was created
by Schramm \cite{Schramm} as a candidate for the scaling limit
of these measures.

For each fixed $\kappa \in (0,8)$, an instance $\gamma$ of $SLE_\kappa$ is a random planar curve
with almost sure Hausdorff dimension $d = 1 + \kappa/8 \in (1,2)$ \cite{Beffara}.
This curve is conventionally parametrized by its half plane capacity (see Section \ref{SLEsubsec}), rather than by any measure
of its $d$-dimensional volume.  Modulo time parametrization, it has been shown that several
discrete random paths on grids (e.g., loop-erased random walk
\cite{LSWlerw}, harmonic explorer \cite{SSharm}) have $SLE$ as a scaling limit.  In these cases, one would
expect the natural discrete parametrization (in which each edge is traversed in the same amount of time)
of the lattice paths to scale to a continuum parametrization of $SLE$.  The goal of this paper is to construct
a candidate for this parametrization, a candidate which is (like $SLE$ itself) completely characterized by
its conformal invariance symmetries, continuity, and Markov properties.

When $\kappa \geq 8$ (and $SLE_\kappa$ is almost surely space-filling) the natural candidate is
the area parameter $\Theta_t :=\text{Area} \,\, \gamma([0,t])$.  One could
use something similar for $\kappa < 8$ if one could show that some conventional measure of the $d$-dimensional volume
of $\gamma([0,t])$ (e.g., the $d$-dimensional Minkowski content or some sort of Hausdorff content; see Section \ref{candidatesubsec}) was well-defined and non-trivial.  In that case, one could replace $\text{Area} \,\, \gamma([0,t])$ with the $d$-dimensional volume of
$\gamma([0,t])$.
We will take a slightly different route.  Instead of directly constructing a $d$-dimensional volume measure (using one
of the classical definitions), we will simply assume that there exists a locally finite measure on $\gamma$ that {\em transforms} like
a $d$-dimensional volume measure under conformal maps and then use this assumption (together with an argument based on the Doob-Meyer
decomposition) to deduce what the measure must be.
We conjecture that the measure we construct is equivalent to the $d$-dimensional Minkowski content, but we will not
prove this.  Most of the really hard work in this paper takes place in Section \ref{momentsec}, where certain second moment bounds
are used to prove that the measure one obtains from the Doob-Meyer decomposition is non-trivial (in particular, that it is
almost surely not identically zero). At present, we are only able to prove this for $\kappa < 4(7 - \sqrt{33}) = 5.021 \cdots$.

We mention that a variant of our approach, due to Alberts and the second co-author of this work,
appears in \cite{AlbertsSheffield}, which gives, for $\kappa \in (4,8)$, a natural {\em local} time parameter for the
intersection of an $SLE_\kappa$ curve with the boundary of the domain it is defined on.   The proofs in \cite{AlbertsSheffield}
cite and utilize the Doob-Meyer-based techniques first developed for this paper; however, the
second moment arguments in \cite{AlbertsSheffield} are very different
from the ones appearing in Section \ref{momentsec} of this work.  It is possible that our techniques will have other
applications.  In particular, it would be interesting to see whether
natural $d$-dimensional volume measures for other random $d$-dimensional sets with conformal invariance properties (such
as conformal gaskets \cite{SSW} or the intersection of an $SLE_{\kappa, \rho}$ with its boundary) can be constructed
using similar tools.  In each of these cases, we expect that obtaining precise second moment bounds will be the most difficult step.

A precise statement of our main results will appear in Section \ref{natparamsec}.  In the meantime, we present
some additional motivation and definitions.

We thank Brent Werness for useful comments on an earlier draft of this paper.

\subsection{Self avoiding walks: heuristics and motivation}
In order to further explain and motivate our main results, we include a
heuristic discussion of a single concrete example: the
self-avoiding walk (SAW).  We will not be very precise here; in fact, what we say
here about SAWs is still only conjectural.  All of the conjectural statements in this section
can be viewed as consequences of the ``conformal invariance Ansatz'' that is
generally accepted (often without a precise formulation) in the physics
literature on conformal field theory.
Let $D \subset \C$ be a simply connected bounded domain,
and let $z,w$ be distinct points on $\p D$.
Suppose that a lattice $\epsilon \Z^2$ is placed on $D$ and let
$\tilde z, \tilde w \in D$ be lattice points in $\epsilon
\Z^2$ ``closest'' to $z,w$.  A SAW  $\omega$
from
$\tilde z$ to $\tilde w$ is a sequence of distinct points
\[          \tilde z = \omega_0,\omega_1,\ldots,\omega_k=
    \tilde w , \]
with $\omega_j \in \epsilon \Z^2 \cap D$ and $|\omega_j
- \omega_{j-1}| = \epsilon$ for $1 \leq j \leq k$.  We write
$|\omega| = k$.
For each $\beta > 0$, we can consider the measure on SAWs from
$\tilde z$ to $\tilde w$ in $D$ that gives measure $e^{-\beta |\omega|},$
to each such
SAW.  There is a critical $\beta_0$, such that the
partition function
\[ \sum_{\omega: \tilde z \rightarrow
 \tilde w, \omega \subset D}  e^{-\beta_0 |\omega|} \]
neither grows nor decays exponentially as a function of
$\epsilon$ as $\epsilon \rightarrow 0.$   It is believed
that if we choose this $\beta_0$, and normalize so that this is
a probability measure, then there is a limiting measure on
paths that is the scaling limit.

It is further believed that the typical number of steps of a SAW in the measure
above is of order $\epsilon^{-d}$ where the exponent
$d = 4/3$ can be
considered the fractal dimension of the paths.  For fixed $\epsilon$,
let us define the scaled function
\[       \hat \omega ( {j} \epsilon^d )
                      =  \omega_{j} , \;\;\;\;
                           j=0,1,\ldots,|\omega|. \]
We use linear interpolation to make this a continuous
path $\hat \omega:[0,\epsilon^d|\omega|] \rightarrow \C$. Then
one expects that the following is true:
\begin{itemize}
\item  As $\epsilon \rightarrow 0$, the above probability measure
on paths converges to a probability measure $\mu_D^\#(z,w)$ supported
on continuous curves $\gamma:[0,t_\gamma] \rightarrow \C$ with
$\gamma(0) = z, \gamma(t_\gamma) = w, \gamma(0,t_\gamma) \subset
D$.
\item  The probability measures $\mu_D^\#(z,w)$ are conformally
invariant.  To be more precise, suppose $F: D \rightarrow D'$
is a conformal transformation that extends to $\p D$ at least
in neighborhoods of $z$ and $w$.  For each $\gamma$ in $D$
connecting $z$ and $w$, we will define a conformally transformed
path $F\circ \gamma$ (with a parametrization described below) on $D'$.
We then denote by $F \circ \mu_D^\#(z,w)$ the push-forward of the measure
$\mu_D^\#(z,w)$ via the map $\gamma \to F\circ \gamma$.  The conformal invariance assumption
is
\begin{equation} \label{confinvariance}
 F \circ \mu_D^\#(z,w) = \mu_{D'}^\#(F(z),F(w)). \end{equation}
\end{itemize}

Let us now define $F \circ \gamma$.  The path $F\circ \gamma$
will traverse the points $F(\gamma(t))$ in order; the only question
is how ``quickly'' does the curve traverse these points.  If we look
at how the scaling limit is defined, we can see that if $F(z)
= rz$ for some $r> 0$, then the lattice spacing $\epsilon$ on
$D$ corresponds to lattice space $r \epsilon$ on $F(D)$ and hence
we  would expect the time to traverse $r \gamma$
should be $r^d$ times the time to traverse $\gamma$.  Using this
as a guide locally, we say that
the amount of time needed to traverse $F(\gamma[t_1,t_2])$ is
\begin{equation}  \label{dec5.1}
        \int_{t_1}^{t_2 }
    |F'(\gamma(s))|^d \, ds .
\end{equation}
This tells us how to parametrize $F \circ \gamma$ and we include
this as part of the definition of $F \circ \gamma$.   This is analogous to the
known conformal invariance of Brownian motion in $\C$ where the
time parametrization must be defined as in \eqref{dec5.1} with
$d=2$.

If there is to be a family of probability measures $\mu_{D}^\#(z,w)$ satisfying
\eqref{confinvariance} for simply connected $D$, then we only need to define
$\mu_\Half^\#(0,\infty)$, where $\Half$ is the upper half plane.
To restrict the set of possible definitions, we introduce another property that one would
expect the scaling limit of SAW to satisfy.  The {\bf domain Markov property} states that if $t$ is a stopping time
for the random path $\gamma$ then given $\gamma([0,t])$,
the conditional law of the remaining path $\gamma'(s) := \gamma(t+s)$ (defined for $s \in [0, \infty)$) is
$$\mu_{\Half\setminus \gamma([0,t])}^\#(\gamma(t),\infty),$$ independent of the
parametrization of $\gamma([0,t])$.

If we consider $\gamma$ and $F \circ \gamma$ as being defined only up to reparametrization,
then Schramm's theorem states that \eqref{confinvariance} (here being considered as a statement about measures
on paths defined up to reparametrization) and the domain Markov property (again interpreted up to reparametrization) characterize the path
as being a chordal $SLE_\kappa$ for some $\kappa > 0$.  (In the case of the
self-avoiding walk, another property called the ``restriction
property'' tells us that we must have $\kappa = 8/3$ \cite{LSWrest,LSWsaw}.)
Recall that if $\kappa \in (0,8)$, Beffara's theorem (partially proved in \cite{RS} and completed
in \cite{Beffara}) states that the Hausdorff dimension of $SLE_\kappa$ is almost surely $d = 1 + \kappa/8$.

The main purpose of this paper is to remove the ``up to reparametrization'' from the above characterization.
Roughly speaking, we will show that the conformal
invariance assumption \eqref{confinvariance} and the domain Markov property uniquely characterize
the law of the random parametrized path as being an $SLE_\kappa$ with a {\em particular} parametrization
that we will construct in this paper.  We may interpret this parametrization as giving a $d$-dimensional volume measure
on $\gamma$, which is uniquely defined up to a multiplicative constant.  As mentioned in
Section \ref{overviewsubsection}, one major caveat is that, due to limitations
of certain second moment estimates we need, we are currently only able to prove that
this measure is non-trivial (i.e., not identically zero) for $\kappa < 4(7 - \sqrt{33}) = 5.021 \cdots$, although
we expect this to be the case for all $\kappa < 8$.




\section{$SLE$ definition and limit constructions}  \label{definitionsec}

\subsection{Schramm-Loewner evolution ($SLE$)} \label{SLEsubsec}

We now provide a quick review of the definition
of the Schramm-Loewner evolution;
see \cite{LBook}, especially
Chapters 6 and 7, for more details.
  We will discuss only  chordal $SLE$ in
this paper, and we will call it just $SLE$.

Suppose  that $\gamma:(0,\infty) \rightarrow
\Half =\{x+iy: y > 0\}$
is a non-crossing curve with $\gamma(0+) \in
\R$ and $\gamma(t) \rightarrow \infty$ as $t \rightarrow
\infty$.  Let $H_t$ be the unbounded component of
$\Half \setminus \gamma(0,t]$.   Using the Riemann mapping
theorem, one    can see that there is a unique conformal
transformation
\[            g_t: H_t \longrightarrow \Half \]
satisfying $g_t(z) - z \rightarrow 0$ as $z \rightarrow \infty$.
It has an expansion at infinity
\[           g_t(z) = z + \frac{a(t)}{z} + O(|z|^{-2}). \]
The coefficient $a(t)$  equals
$\hcap(\gamma(0,t])$ where $\hcap(A)$ denotes
the half plane capacity from infinity of a bounded set
$A$.  There are a number of ways of defining $\hcap$, e.g.,
\[             \hcap(A) = \lim_{y \rightarrow \infty}
   y\,  \E^{iy}[\Im(B_{\tau})], \]
where $B$ is a complex Brownian motion and $\tau = \inf\{t:
B_t \in \R \cup A\}$.

\begin{definition}
The {\em Schramm-Loewner evolution},  $SLE_\kappa$, (from
$0$ to infinity in $\Half$) is the
 random curve $\gamma(t)$ such that $g_t$
satisfies
\begin{equation}
\label{loewner}
         \dot g_t(z) = \frac{a}{g_t(z) - V_t} ,\;\;\;\;
   g_0(z) = z,
\end{equation}
where $a=2/\kappa$ and $V_t = -B_t$ is a standard Brownian
motion.
\end{definition}

Showing that the
conformal maps $g_t$ are well defined is easy.  In fact,
for given $z \in \Half$, $g_t(z)$ is defined up to
time $T_z = \sup\{t: \Im g_t(z) > 0\}.$   Also,
$g_t$ is the unique conformal transformation of $H_t  =\{
z \in \Half: T_z > t\}$ onto $\Half$ satisfying
$g_t(z) - z  \rightarrow 0$ as $z \rightarrow \infty$.
 It is not as easy to show that $H_t$ is given
by the unbounded component of $\Half \setminus
\gamma(0,t]$ for a curve $\gamma$.  However,
  this was shown
for $\kappa \neq 8$ by Rohde and Schramm
\cite{RS}. If $\kappa \leq 4$, the curve is simple and
$\gamma(0,\infty) \subset \Half$.  If $\kappa > 4$, the
curve has double points and $\gamma(0,\infty ) \cap
\R \neq \emptyset.$  For $\kappa \geq 8,$ $\gamma(0,\infty)$
is plane filling; we will restrict our consideration to
$\kappa < 8$.

\begin{remark}
We have defined chordal $SLE_\kappa$ so that it is {\em
parametrized
by capacity} with \[ \hcap(\gamma(0,t]) = at.\]  It is more often
defined with the capacity parametrization chosen so that
$ \hcap(\gamma[0,t]) = 2t$.  In this case we need to choose
$U_t = - \sqrt \kappa \, B_t$.  We will choose the parametrization
in (\ref{loewner}), but this is only for our convenience.
 Under our parametrization, if $z \in \overline \Half
\setminus \{0\}$, then
 $Z_t = Z_{t}(z)  := g_t(z) - U_t$ satisfies the Bessel equation
\[          dZ_{t} = \frac{a}{Z_{t}}\, dt + dB_t. \]
In this paper we will use both $\kappa$ and $a$ as notations;
throughout, $a = 2/\kappa$. \end{remark}

We let
\[  f_t = g_t^{-1},\;\;\;\; \hat f_t(z) = f_t(z + U_t). \]
We recall the following scaling relation
\cite[Proposition 6.5]{LBook}.

\begin {lemma}[Scaling]   \label{scalinglemma}
  If
$r > 0$, then the distribution of $g_{tr^2}(rz)/r$ is the same
as that of $g_t(z)$; in particular, $g_{tr^2}'(rz)$ has the
same distribution as $g'_t(z)$.
\end{lemma}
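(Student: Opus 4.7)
The plan is to reduce the lemma to Brownian scaling applied to the driving function. Define the candidate rescaled map $\tilde g_t(z) := g_{r^2 t}(rz)/r$ and verify that it satisfies the Loewner equation (\ref{loewner}) driven by another standard Brownian motion, then invoke uniqueness (weak uniqueness in law) of the driven Loewner evolution to conclude $\tilde g_t \stackrel{d}{=} g_t$.

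First I would compute $\frac{d}{dt}\tilde g_t(z)$ via the chain rule: the time reparametrization produces a factor $r^2$ and the division by $r$ produces a factor $1/r$, so combining with the Loewner equation for $g_{r^2 t}(rz)$ gives
\[
\frac{d}{dt}\tilde g_t(z) \;=\; r \cdot \frac{a}{g_{r^2 t}(rz) - V_{r^2 t}} \;=\; \frac{a}{\tilde g_t(z) - V_{r^2 t}/r}.
\]
Next I would identify the new driving function $\tilde V_t := V_{r^2 t}/r = -B_{r^2 t}/r$, which by the usual scaling of Brownian motion is itself a standard Brownian motion. Together with the initial condition $\tilde g_0(z) = z$ and the expansion $\tilde g_t(z) - z \to 0$ at infinity (which is preserved under the transformation), this exhibits $\tilde g_t$ as an $SLE_\kappa$ Loewner chain with the same parameter $a = 2/\kappa$. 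Thus $\tilde g_t$ and $g_t$ have the same law as processes in $t$ and $z$, proving the first claim.

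The derivative statement is then immediate: differentiating the identity $\tilde g_t(z) = g_{r^2 t}(rz)/r$ in $z$ yields $\tilde g_t'(z) = g_{r^2 t}'(rz)$, and the distributional equality $\tilde g_t \stackrel{d}{=} g_t$ specializes to $g_{tr^2}'(rz) \stackrel{d}{=} g_t'(z)$. There is really no main obstacle here; the only thing to be careful about is that one is comparing the laws of entire processes (so that uniqueness-in-law for the Loewner SDE, rather than a pointwise computation, justifies the distributional identification), and that the asymptotic normalization at infinity is preserved by the rescaling $(t,z) \mapsto (r^2 t, rz)/r$ so that $\tilde g_t$ is the unique hydrodynamically normalized map from its domain onto $\Half$.
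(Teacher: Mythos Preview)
Your proposal is correct and is the standard argument; the paper itself does not give a proof but simply cites \cite[Proposition~6.5]{LBook}, where essentially this same Brownian-scaling verification appears. The computation $\frac{d}{dt}\tilde g_t(z)=\frac{a}{\tilde g_t(z)-\tilde V_t}$ with $\tilde V_t=r^{-1}V_{r^2t}$ a standard Brownian motion, together with the preserved hydrodynamic normalization, is exactly the right route.
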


  For
$\kappa < 8$,  we let
\begin{equation}  \label{nov28.20}
       d = 1 + \frac \kappa 8 = 1 + \frac 1{4a}.
\end{equation}
 If $z \in \C$ we will write $x_z,y_z$ for the real and
imaginary parts of $z = x_z + iy_z$ and $\theta_z$ for the
argument of $z$.  Let
\begin{equation}  \label{green}
     G(z) := y_z^{d-2}\, [(x_z/y_z)^2 + 1]^{\frac 12 - 2a}
   =|z|^{d-2}  \, \sin^{\frac
  \kappa 8 + \frac 8{\kappa} -2} \theta_z   ,
\end{equation}
denote the ``Green's function'' for $SLE_\kappa$ in
$\Half$.  The value of $d$ and the function $G$ were first found in \cite{RS}
and are characterized by the scaling rule $G(rz) =
r^{d-2} \, G(z)$ and the fact that
\begin{equation}  \label{localmart}
    M_t(z) := |g_t'(z)|^{2-d} \, G(Z_t(z))
\end{equation}
is a local martingale.  In fact, for a given $\kappa$, the scaling rule $G(rz) =
r^{d-2} \, G(z)$ and the requirement that \eqref{localmart} is a local martingale
uniquely determines $d$ and (up to a multiplicative constant) $G$.
Note that
if $K < \infty$,
\begin{equation}  \label{greenintegral}
           \int_{|z| \leq K} G(z) \, dA(z)
  = K^{d} \, \int_{|z| \leq 1} G(z) \, dA(z) < \infty .
\end{equation}
Here, and throughout this paper, we use $dA$ to denote
integration with respect to area.  The Green's function will
turn out to describe the expectation of the measure we intend
to construct in later sections, as suggested by the following
proposition.

\begin{proposition} \label{expectationprop}
Suppose that there exists a parametrization for $SLE_\kappa$ in $\Half$ satisfying the domain Markov
property and the conformal invariance assumption \eqref{confinvariance}.  For a fixed Lebesgue measurable subset $S \subset \Half$,
let $\Theta_t(S)$ denote the process that gives the amount of time in this parametrization
spent in $S$ before time $t$
(in the half-plane capacity parametrization given above), and suppose further that $\Theta_t(S)$ is
$\mathcal F_t$ adapted for all such $S$.  If $\mathbb E \Theta_\infty(D)$ is finite for
all bounded domains $D$, then it must be the case that (up to multiplicative constant)
$$\mathbb E \Theta_\infty(D) = \int_D G(z) dA(z),$$ and more generally, $$\mathbb E[\Theta_\infty(D) - \Theta_t(D)|
\mathcal F_t] = \int_D M_t(z) dA(z).$$
\end{proposition}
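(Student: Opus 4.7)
The plan is to combine the domain Markov property, the conformal invariance hypothesis \eqref{confinvariance}, and the martingale property of $\mathbb{E}[\Theta_\infty(D) \mid \mathcal F_t]$ to pin down the density of the expected occupation measure $\nu := \mathbb{E}\Theta_\infty$.

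First, fix a bounded $D \subset \Half$ and $t > 0$. By the domain Markov property, conditionally on $\mathcal F_t$ the residual measure $S \mapsto \Theta_\infty(S) - \Theta_t(S)$ is distributed as the natural time measure of an $SLE_\kappa$ in $H_t$ from $\gamma(t)$ to $\infty$. By conformal invariance applied to $\hat f_t:\Half \to H_t$, this $SLE_\kappa$ is the $\hat f_t$-image of an independent $SLE_\kappa$ in $\Half$ started from $0$, with time reparametrized by the factor $|\hat f_t'|^d$ as in \eqref{dec5.1}. Using the $\mathcal F_t$-measurability of $\hat f_t$ together with this independence, I obtain
\[ \mathbb{E}[\Theta_\infty(D) - \Theta_t(D) \mid \mathcal F_t] = \int_{\hat f_t^{-1}(D)} |\hat f_t'(z)|^d \, d\nu(z). \]
At $t=0$ this reduces to $\mathbb{E}\Theta_\infty(D) = \nu(D)$, so the first claim will follow from the second once $\nu$ is identified.

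Next, after arguing that $\nu$ has a density $\rho$ with respect to area (see below), I would change variables via $w = \hat f_t(z)$, using $z = Z_t(w)$, $|\hat f_t'(z)| = |g_t'(w)|^{-1}$, and $dA(z) = |g_t'(w)|^2\, dA(w)$, to turn the identity above into
\[ \mathbb{E}[\Theta_\infty(D) - \Theta_t(D) \mid \mathcal F_t] = \int_D |g_t'(w)|^{2-d}\, \rho(Z_t(w))\, dA(w). \]
To establish absolute continuity of $\nu$, I would first use the scaling Lemma \ref{scalinglemma} together with the $|F'|^d$ time-change rule to get $\nu(rS) = r^d \nu(S)$ for $r>0$, and combine this with the fact that $\mathbb{E}\Theta_\infty(\{z\}) = 0$ for every fixed $z \in \Half$ (for $\kappa<8$ the curve almost surely avoids any fixed point) to conclude $\nu \ll dA$, with a density $\rho$ satisfying $\rho(rz) = r^{d-2}\rho(z)$.

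Finally, to identify $\rho$ with $G$, I would exploit that $\mathbb{E}[\Theta_\infty(D) \mid \mathcal F_t]$ is a true martingale while $\Theta_t(D)$ is $\mathcal F_t$-adapted and almost surely increasing; thus the compensator $\int_D |g_t'(w)|^{2-d}\rho(Z_t(w))\, dA(w)$ is the Doob--Meyer potential of $\Theta_t(D)$. Localizing $D$ to shrinking neighborhoods of a fixed $w_0$ and using Fubini together with the fact that $\mathbb{E}\Theta_t(\{w_0\}) = 0$ for a.e.\ $w_0$, a standard differentiation argument shows that for a.e.\ $w_0 \in \Half$ the process $t \mapsto |g_t'(w_0)|^{2-d}\rho(Z_t(w_0))$ is itself a local martingale on $\{t < T_{w_0}\}$. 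Combined with the scaling $\rho(rz) = r^{d-2}\rho(z)$, the uniqueness statement following \eqref{localmart} then yields $\rho = cG$, and the constant $c$ is absorbed into the multiplicative normalization of the proposition. The hardest step, in my view, will be this last one: rigorously passing from the measure-valued martingale identity to the pointwise local-martingale property for $\rho$, because $\Theta_t$ itself lives on a set of Lebesgue measure zero even though its expectation is absolutely continuous, so the ``density extraction'' must be done in expectation via test functions rather than sample-path-wise.
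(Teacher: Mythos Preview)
Your overall architecture is the same as the paper's: derive the conditional formula
\[
\mathbb{E}[\Theta_\infty(D)-\Theta_t(D)\mid\mathcal F_t]=\int_D |g_t'(w)|^{2-d}\rho(Z_t(w))\,dA(w),
\]
establish that $\nu=\mathbb{E}\Theta_\infty$ has a density $\rho$ with the scaling $\rho(rz)=r^{d-2}\rho(z)$, and then use the local-martingale characterization following \eqref{localmart} to force $\rho=cG$. The conditional step and the final identification are essentially what the paper does.

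The genuine gap is in your absolute-continuity argument. The implication ``$\nu(rS)=r^d\nu(S)$ for all $r>0$ and $\nu$ has no atoms $\Rightarrow$ $\nu\ll dA$'' is false. In polar coordinates any measure of the form $r^{d-1}\,dr\otimes\lambda(d\theta)$ on $\Half$, with $\lambda$ an arbitrary finite measure on $(0,\pi)$, satisfies the $d$-scaling and is atomless, yet is singular with respect to area whenever $\lambda$ is singular (e.g.\ $\lambda=\delta_{\pi/2}$ puts $\nu$ on the imaginary axis). Scaling only constrains the radial part; you need something that mixes angles to rule out a singular angular component.

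The paper supplies exactly that missing ingredient: for $S$ bounded away from $\R$ it writes
\[
\nu(S)=\mathbb{E}\int_{g_t(S)}|(g_t^{-1})'(z)|^{d}\,d\nu(z)
\]
for a small fixed $t>0$, and then observes that the joint law of $(g_t(z),g_t'(z))$ has a smooth density which varies smoothly in $z$. This Brownian smoothing of the Loewner flow averages over angles and forces $\nu$ to have a smooth Radon--Nikodym derivative; combined with the scaling you already have, one gets $\rho(z)=|z|^{d-2}F(\theta_z)$ with $F$ smooth. Once smoothness is in hand, the paper (like you) applies It\^o's formula to $|g_t'(w)|^{2-d}\rho(Z_t(w))$ and reads off the ODE for $F$, which pins it down as the angular part of $G$. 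This also obviates the delicate ``density extraction'' you flagged as the hardest step: with a smooth $\rho$ the pointwise local-martingale computation is just calculus, no measure-theoretic localization needed.
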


\begin{proof}
It is  immediate from the conformal invariance assumption (which in particular implies scale invariance)
that the measure $\nu$ defined by $\nu(\cdot) = \mathbb E \Theta_\infty(\cdot)$ satisfies $\nu(r \cdot) = r^d \nu(\cdot)$ for
each fixed $r > 0$.  To prove the proposition, it is enough to show that $\nu(\cdot) =  \int_\cdot G(z) dA(z)$ (up to a constant factor),
since the conditional statement at the end of the proposition then follows from the domain Markov property
and conformal invariance assumptions.

The first observation to make is that $\nu$ is absolutely continuous with respect to Lebesgue measure,
with smooth Radon-Nikodym derivative.  To see this, suppose that $S$ is bounded away from the real axis,
so that there exists a $t>0$ such that almost surely
no point in $S$ is swallowed before time $t$.  Then the conformal invariance assumption
\eqref{confinvariance} and the domain Markov property imply that
$$\nu (S) = \mathbb E \int_{g_t(S)} |(g_t^{-1})'(z)|^{-d} d\nu(z).$$
The desired smoothness can be then deduced from the fact that the law of the pair $g_t(z), g_t'(z)$
has a smooth Radon-Nikodym derivative that varies smoothly with $z$ (which follows
from the Loewner equation and properties of Brownian motion).
  Recalling the scale invariance, we conclude that $\nu$
has the form $$|z|^{d-2}  \, F(\theta_z) dA(z)$$ for some smooth function $F$.
Standard Ito calculus and the fact that $M_t$ is a local martingale determine $F$ up to a constant factor,
implying that $F(z)|z|^{d-2} = G(z)$ (up to a constant factor).
\end{proof}

\begin{remark}
It is not clear whether it is necessary to assume in the statement of Proposition \ref{expectationprop}
that $\mathbb E \Theta_\infty(D) < \infty$ for bounded domains $D$. It is possible that if one had
$\mathbb E \Theta_\infty(D) = \infty$ for some bounded $D$ then one could use some scaling arguments and the law of large numbers
to show that in fact $\Theta_\infty(D) = \infty$ almost surely for domains $D$ intersected by the path $\gamma$.
If this is the case, then the assumption $\mathbb E \Theta_\infty(D) < \infty$ can be replaced by the
weaker assumption that $\Theta_\infty(D) < \infty$ almost surely.
\end{remark}

\subsection{Attempting to construct the parametrization as a limit} \label{candidatesubsec}
As we mentioned in the introduction, the parametrization we will
construct in Section \ref{natparamsec} is uniquely determined by
certain conformal invariance assumptions and the domain Markov property.
Leaving this fact aside, one could also motivate our definition by noting its similarity
and close relationship to some of the other obvious candidates for a
$d$-dimensional volume measure on an arc of an $SLE_\kappa$ curve.

In this section, we will describe two of the most natural candidates: Minkowski measure and
$d$-variation.  While we are not able to prove that either of these candidates is well
defined, we will point out that both of these candidates have variants
that are more or less equivalent to the measure we will construct in Section \ref{natparamsec}.
In each
case we will define approximate parametrizations
$\tau_n(t)$ and propose that a natural parametrization $\tau$
could be given by
\[
           \tau(t) = \lim_{n \rightarrow \infty}
             \tau_n(t),
\]
if one could show that this limit (in some sense) exists and is non-trivial.

To motivate these constructions, we begin by assuming that any candidate for the natural parametrization should satisfy
the appropriate scaling relationship.  In particular if
$\gamma(t)$ is an $SLE_\kappa$ curve that is parametrized
so that $\hcap[\gamma(0,t]] = at$,  then $\tilde \gamma
(t) = r \gamma(t)$ is an $SLE_\kappa$ curve parametrized
so that  $\hcap[\gamma(0,t]] = r^2at$. If it takes time
$\tau(t)$ to traverse $\gamma(0,t]$ in the natural
parametrization, then it should take time
$r^d \, \tau(t)$ to traverse $\tilde \gamma(0,t]$ in
the natural parametrization.  In particular, it should take
roughly time $O(R^d)$ in the natural parametrization for the path to travel distance $R$.

\subsubsection{Minkowski content}  \label{minksec}

Let
\[    {\mathcal N}_{t,\epsilon}   =
              \{z \in \Half : \dist(z,\gamma(0,t]) \leq \epsilon\}, \]
 \[       \tau_n(t) =  n^{2-d}\, {\rm area}
\, (  {\mathcal N}_{t,1/n} )
   . \]
We call the limit $\tau(t) = \lim_{n \rightarrow \infty} \tau_n(t),$ if it exists, the Minkowski content of
$\gamma(0,t]$.
Using the local martingale \eqref{localmart}
one can show that  as $\epsilon \rightarrow
0+$,
\begin{equation}  \label{jul25.2}
          \Prob\{z \in   {\mathcal N}_{\infty,\epsilon}
   \}  \asymp  G(z) \, \epsilon^{2-d}.
\end{equation}

We remark that a commonly employed alternative to Minkowski content is the
$d$-dimensional Hausdorff content; the Hausdorff content
of a set $X \subset D$ is defined to be the limit as $\epsilon \to 0$
of the infimum --- over all coverings of $X$ by balls with some radii
$\epsilon_1, \epsilon_2, \ldots < \epsilon$ ---
of $\sum \Phi(\epsilon_i)$ where $\Phi(x) = x^d$.  We have at least some intuition, however, to
suggest that the Hausdorff content of $\gamma([0,t])$ will be almost surely zero for all $t$.
Even if this is the case, it may be that the Hausdorff content is non-trivial when $\Phi$ is replaced by another function
(e.g., $\Phi(x) = x^d \log \log x$), in which case we would expect
it to be equivalent, up to constant, to the $d$-dimensional Minkowski measure.

\subsubsection{Conformal Minkowski content}  \label{confminksec}

There is a variant of the Minkowski content that could be
called the {\em conformal Minkowski content}.  Let $g_t$ be the
conformal maps as above.  If $t < T_z$,   let
\[               \Upsilon_t(z) = \frac{\Im[g_t(z)]}
              {|g_t'(z)|}. \]
It is not hard to see that $\Upsilon_t(z)$ is the {\em conformal
radius} with respect to $z$ of the domain $ D(t,z)$,
   the component of $\Half \setminus \gamma(0,t]$
containing $z$.  In other words, if $F: \Disk \rightarrow D(t,z)$
is a conformal transformation with $F(0) = z$, then
$|F'(0)| = \Upsilon_t(z)$.
Using the Schwarz lemma or by doing a
simple calculation, we can see that  $\Upsilon_t(z)$ decreases in
$t$ and hence we can define
\[             \Upsilon_t(z) = \Upsilon_{T_z-}(z), \;\;\;\;
  t \geq T_z. \]
Similarly, $\Upsilon(z) = \Upsilon_\infty(z)$ is well defined; this
is the conformal radius with respect to $z$ of the domain
$D(\infty,z)$.
The Koebe $1/4$-Theorem implies that
$\Upsilon_t(z) \asymp \dist[z,\gamma(0,t] \cup \R]$; in fact, each
side is bounded above by four times the other side.  To prove
 \eqref{jul25.2} one can show that there is a $c_*$ such
that
\[       \Prob\{\Upsilon (z) \leq \epsilon\}
               \sim c_* \, G(z) \, \epsilon^{2-d},
\;\;\;\;  \epsilon \rightarrow 0+. \]
This was first established in \cite{LBook} building on the
argument in \cite{RS}.  The conformal Minkowski content
is defined as in the previous paragraph replacing $
{\mathcal N}_{t,\epsilon} $
with
\[ {\mathcal N}_{t,\epsilon}^*
 =  \{z \in \Half :  \Upsilon_t(z)  \leq \epsilon\}. \]
It is possible that this limit will be easier to establish.
Assuming the limit exists, we can see that the expected amount of
time (using the natural
parametrization)
that $\gamma(0,\infty)$ spends in a bounded domain $D$
should be given (up to multiplicative constant) by
\begin{equation}  \label{jul25.4}
              \int_D G(z) \, dA(z) ,
\end{equation}
where $A$ denotes area.  This formula agrees with Proposition \ref{expectationprop}
and will be the starting point
for our construction of the natural parametrization
in Section \ref{natparamsec}.

\subsubsection{$d$-variation}

The idea that it should take roughly time $R^d$ for the path to move
distance $R$ --- and thus $\tau(t_2)-\tau(t_1)$ should
be approximately $|\gamma(t_2)-\gamma(t_1)|^d$ ---
motivates the following definition.  Let
\[    \tau_n(t)  = \sum_{k=1}^{\lfloor tn
  \rfloor}  \left|\gamma\left(\frac kn \right)
   - \gamma \left(\frac{k-1}{n}\right)\right|^d . \]
More generally, we can consider
\[    \tau_n(t) = \sum_{t_{j-1,n} <t} \left|\gamma(t_{j,n}),
   - \gamma(t_{j-1,n})\right|^d, \]
where $t_{0,n} < t_{1,n} < t_{2,n} < \infty$ is a partition,
depending on $n$, whose mesh goes to zero as $n \rightarrow \infty$,
and as usual $d=1+\kappa/8$.  It is natural
to expect that for a wide class of partitions this limit
exists and is independent of the choice of partitions.
In the case $\kappa = 8/3$, a version of this
was studied numerically
by Kennedy \cite{Kennedy}.

\subsubsection{A variant of $d$-variation}  \label{mysec}
We next propose a variant of the $d$-variation in which
an expression involving derivatives of $\hat f'$ (as defined
in Section \ref{SLEsubsec}) takes the
place of $|\gamma(t_2)-\gamma(t_1)|^d$.
Suppose
$\tau(t)$ were the natural parametrization.
 Since $\tau(1) < \infty$, we would
expect that the average value of
\[               \Delta_n \tau (j) := \tau\left(
   \frac {j+1}n \right) - \tau\left(\frac{j}n\right) \]
would be of order $1/n$ for typical $j \in \{1,2, \ldots, n\}$.  Consider
\[           \gamma^{(j/n)}\left[0,\frac 1n\right]
  := g_{j/n}\left(\gamma\left[\frac jn, \frac{j+1}n\right]
 \right). \]
Since the $\hcap$ of this set is $a/n$, we expect that the
diameter of the set is of order $1/\sqrt n$.  Using
the scaling properties, we guess that the time needed
to traverse $ \gamma^{(j/n)}\left[0,\frac 1n\right]$
in the natural parametrization is of order $n^{-d/2}.$
Using
the scaling properties again, we guess that
\[            \Delta_n \tau (j) \approx
                 n^{-d/2} \, |\hat f'_{j/n}(i/\sqrt n)|^d. \]
This leads us to define
\begin{equation}  \label{apr5.5}
           \tau_n(t) =  \sum_{k=1}^{\lfloor tn
  \rfloor}    n^{-d/2} \, |\hat f'_{k/n}(i/\sqrt n)|^d.
\end{equation}

More generally, we could let
\begin{equation}  \label{oct9.1}
     \tau_n(t) =  \sum_{k=1}^{\lfloor tn
  \rfloor}    n^{-d/2}
  \int_\Half  |\hat f'_{k/n}(z/\sqrt n)|^d\, \nu(dz),
\end{equation}
where $\nu$ is a finite measure on $\Half$.
It will turn out that the
parametrization we construct in Section \ref{natparamsec} can be realized as a limit of this
form with a particular choice of $\nu$.  We expect that (up to a constant factor)
this limit is independent of $\nu$, but we will not prove this.

\section{Natural parametrization} \label{natparamsec}

\subsection{Notation} \label{notationsection}

We now summarize some of the key notations we will use throughout the paper.
For $z \in \Half$, we write
\[   Z_t(z) = X_t(z) + i Y_t(z) = g_t(z) - V_t ,  \]\[
   R_t(z) = \frac{X_t(z)}{Y_t(z)} , \;\;\;\;
      \distsub_t(z) = \frac{Y_t(z)}{|g_t'(z)|},\]
\[   M_t(z) = \distsub_t(z)^{d-2}\, (R_t(z)^2 + 1)
  ^{\frac 12 - 2a} = |g_t'(z)|^{2-d}\, G(Z_t(z)). \]
At times we will write just
$Z_t,X_t,Y_t,R_t,\distsub_t,M_t$ but
it is important to remember that these quantities   depend on $z$.

\subsection{Definition}

We will now give a precise definition of the natural time parametrization.
It will be easier to restrict our attention to the time spent in a fixed domain
bounded away from the real line.
Let $\domain$ denote the set of bounded domains $D \subset
\Half$ with $\dist(\R,D) > 0$.  We write
\[          \domain = \bigcup_{m=1}^\infty
                 \domain_m , \]
where $\domain_m$ denotes the set of domains $D$ with
\[              D \subset \{x+iy : |x| < m , 1/m < y < m\}.\]
 Suppose for the moment that $\Theta_t(D)$ denotes the amount
of time in  the natural parametrization that the curve
spends in the domain $D$.  This is {\em not} defined at
the moment so we are being heuristic.  Using
 \eqref{jul25.4} or Proposition \ref{expectationprop} we expect (up to a multiplicative constant that
we set equal to one)
\[   \E\left[\Theta_\infty(D)\right]
    = \int_D G(z) \, dA(z) . \]
In particular, this expectation is finite for bounded $D$.

Let
$\F_t$ denote the $\sigma$-algebra generated by
$\{V_s: s \leq t\}$.
For any process $\Theta_t$ with finite expectations, we would expect that
\[   \E[\Theta_\infty(D) \mid \F_t] = \Theta_t(D) +
          \E[\Theta_\infty(D) - \Theta_t(D) \mid \F_t]. \]
If $z \in D$, with $t < T_z$, then the Markov property
for $SLE$ can be used to see that the conditional distribution
of $\distsub(z)$ given $\F_t$ is the same as the distribution
of $|g_t'(z)|^{-1}\, \distsub^*$ where $\distsub^*$ is independent
of $\F_t$ with the distribution of $\distsub(Z_t(z))$.
This gives us another heuristic way of deriving the formula in Proposition \ref{expectationprop}:
\begin{eqnarray*}
 \lim_{\delta \rightarrow 0+} \delta^{d-2}
  \, \Prob\{\Upsilon(z)< \delta \mid \F_t\}
 & = &
\lim_{\delta \rightarrow 0+} \delta^{d-2}
  \, \Prob\{ \distsub^* \leq \delta\,  |g_t'(z)| \}\\
  & = & c_* \, |g_t'(z)|^{2-d} \, G(Z_t(z)) = c_* \, M_t(z) .
 \end{eqnarray*}
We therefore see that
\[    \E[\Theta_\infty(D) - \Theta_t(D) \mid \F_t]=
  \Psi_t(D) , \]
where
\[ \Psi_t(D) =      \int_D M_t(z)\,
                 1\{T_z > t\} \, dA(z) . \]

We now use the conclusion of Proposition \ref{expectationprop} to give a precise definition for $\Theta_t(D)$.
The expectation formula from this proposition is
\begin{equation}  \label{psi}
\Psi_t(D)  =
          \E[\Theta_\infty(D) \mid \F_t] - \Theta_t(D).
\end{equation}
The left-hand side is clearly supermartingale in $t$ (since it is a weighted average
of the $M_t(x)$, which are non-negative local martingales and hence supermartingales).  It
is reasonable to expect (though we have not proved this) that $\Psi_t(D)$ is in fact continuous as a function of $D$.
Assuming the conclusion of Proposition \ref{expectationprop}, the first term on the right-hand side is a martingale and the map
$t \mapsto \Theta_t(D)$ is increasing.
The reader may recall the continuous case of the standard
Doob-Meyer theorem \cite{DM}: any
continuous supermartingale can be written uniquely as the sum of a continuous adapted decreasing process
with initial value zero and a continuous local martingale.  If $\Psi_t(D)$ is a continuous supermartingale,
it then follows that \eqref{psi} is its Doob-Meyer decomposition.
Since we have a formula for $\Psi_t(D)$, we could (if we knew $\Psi_t(D)$ was continuous)
simply {\em define} $\Theta_t(D)$ to be the unique continuous,
increasing, adapted process such that
\[
 \Theta_t(D)  +
 \Psi_t(D)
\]
is a local
martingale.

Even when it is not known that $\Psi_t(D)$ is continuous, there is a canonical
Doob-Meyer decomposition that we could use to define $\Theta_t(D)$, although the details are more complicated (see
\cite{DM}).  Rather than focus on these issues,
what we will aim to prove in this paper is that there
exists an adapted continuous decreasing $\Theta_t(D)$ for which $\Theta_t(D) + \Psi_t(D)$ is a {\em martingale}.
If such a process exists, it is obviously unique, since if there were another such
process $\tilde \Theta_t(D)$, then $\Theta_{t}(D) - \tilde \Theta_{t }(D)$ would be a continuous
martingale with paths of bounded variation and hence identically zero.
One consequence of having $\Theta_t(D) + \Psi_t(D)$ be a martingale (as opposed to merely a local martingale) is that $\Theta_t(D)$
is not identically zero; this is because $\Psi_t(D)$ is a strict supermartingale (i.e.,
not a martingale), since it is an average of processes $M_t(x)$ which are strict supermartingales (i.e.,
not martingales).  Another reason for wanting $\Theta_t(D)  +
 \Psi_t(D)$ to be a martingale is that this will imply that $\Theta_t$ (defined below) actually satisfies the hypotheses
Proposition \ref{expectationprop}, and (by Proposition \ref{expectationprop}) is the unique
process that does so.
Showing the existence of an adapted continuous increasing $\Theta_t(D)$ that makes $\Theta_t(D) + \Psi_t(D)$ a martingale takes
work.  We conjecture that this is true for all $\kappa <
8$; in this paper we prove it for
\begin{equation}  \label{kappadef}
 \kappa < \kappa_0 :=  4(7 - \sqrt{33}) = 5.021 \cdots         .
\end{equation}

\begin{definition}$\;$
\begin{itemize}

\item
If $D \in \domain$, then the natural parametrization
$\Theta_t(D)$ is the unique continuous, increasing process such
that
\[         \Psi_t(D) + \Theta_t(D) \]
is a martingale (assuming such a process exists).

\item  If $\Theta_t(D)$ exists for each $D \in \domain$,
we define
\[            \Theta_t = \lim_{m \rightarrow \infty}
    \Theta_t(D_m) , \]
where
$D_m = \{x+iy: |x|<m, 1/m < y < m\}. $
\end{itemize}
\end{definition}

The statement of the main theorem includes a function $\phi$
related to the Loewner flow that is defined
later in \eqref{phidef}.  Roughly speaking, we think
of $\phi$ as
\[   \phi(z) = \Prob\{z \in \gamma(0,1] \mid z \in \gamma(0,
 \infty)\}. \]
This equation as written does not make sense because we
are conditioning on an event of probability zero.  To
be precise it is defined by
\begin{equation}  \label{phidef2}
   \E\left[M_1(z)\right] = M_0(z) \, [1-\phi(z)].
\end{equation}
Note that the conclusion of Proposition \ref{expectationprop} and our definition of $\Theta$ imply
that
$$\mathbb E \Theta_1(D) = \int_D \phi(z) G(z) dA(z).$$  This a point worth highlighting: the hypotheses of
Proposition \ref{expectationprop} determine not only the form of $\mathbb E[\Theta_\infty(D)]$ (up to multiplicative constant)
but also $\mathbb E[\Theta_1(D)]$ and (by scaling) $\mathbb E[\Theta_t(D)]$ for general $t$.

In the theorem below, note that
\eqref{aug14.1} is of the form
 \eqref{oct9.1} where $\nu(dz) = \phi(z) \, G(z) \,
dA(z)$.  Let $\kappa_0$ be as in \eqref{kappadef}
 and let $a_0 = 2/\kappa_0$.  Note that
\begin{equation}  \label{kappa}
\frac {16}{\kappa} + \frac{\kappa}{16} > \frac 72 , \;\;\;\;
  0 < \kappa < \kappa_0.
\end{equation}
We will need this estimate later which puts the
restriction on $\kappa$.

\begin{theorem}  $\;$  \label{maint}

\begin{itemize}

\item For
$\kappa < 8$ that are good in the sense of \eqref{aug9.3}
and all $D \in \domain$,
 there is an adapted,
increasing, continuous process $\Theta_{t}(D)$ with $\Theta_0(D) = 0$
such that
\[     \Psi_{t}(D) + \Theta_{t}(D) \]
is a martingale. Moreover, with probability one for all $t$
\[   \Theta_{t}(D) = \hspace{2in} \]
\begin{equation}  \label{aug14.1}
 \lim_{n \rightarrow \infty}
             \sum_{j \leq t2^n}
      \int_\Half |
 \hat f_{\frac{j-1}{2^{n}}}'(z)|^d \,\phi(z 2^{n/2})\,  G(z)
     \,1\{\hat f_{ \frac{j-1}{2^{n}}}(z) \in D\} \,  dA(z)
,
\end{equation}
where
$\phi$ is defined in \eqref{phidef2}.

\item If $\kappa < \kappa_0$, then
$\kappa$ is good.

\end{itemize}

\end{theorem}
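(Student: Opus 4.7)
The plan is to construct $\Theta_t(D)$ as an $L^2$ limit of discrete-time compensators and then, separately, to identify the range of $\kappa$ for which the limit exists. For each $n$, define the dyadic compensator
\[
\Theta^n_t(D) = \sum_{j \le t 2^n} \E\bigl[\Psi_{(j-1)/2^n}(D) - \Psi_{j/2^n}(D) \bigm| \F_{(j-1)/2^n}\bigr],
\]
so that $\Psi_t(D) + \Theta^n_t(D)$ is automatically a martingale on the dyadic grid of mesh $2^{-n}$. Applying the Markov property of $SLE$ at time $(j-1)/2^n$, the continuation of $\gamma$ is the image under $\hat f_{(j-1)/2^n}$ of an independent $SLE_\kappa$ from $0$ to $\infty$ in $\Half$; inserting this into the formula for $\Psi$ and using the scaling of $M_s$ together with the definition \eqref{phidef2} of $\phi$ identifies each conditional expectation with the inner integral in \eqref{aug14.1}. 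Thus the partial sums in \eqref{aug14.1} are precisely $\Theta^n_t(D)$, and the first statement amounts to showing that this sequence of nondecreasing adapted processes converges to a continuous, increasing, adapted limit.

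The first main task is to prove $\{\Theta^n_t(D)\}$ is Cauchy in $L^2$. Because $\Theta^{n+1}_t(D) - \Theta^n_t(D)$ is the compensator of a discrete-time martingale, orthogonality of martingale increments reduces its $L^2$ norm to a sum of terms $\E[(\delta_j^n)^2]$, where $\delta_j^n$ measures the discrepancy on an interval of length $2^{-n}$ between the one-step compensator and the two corresponding compensators at scale $2^{-n-1}$. Expanding $(\delta_j^n)^2$ as a double integral over $\Half \times \Half$, each term is controlled by the two-point quantity $\E[M_s(z)M_s(w)]$ for $s$ of order $j 2^{-n}$. The "good" hypothesis \eqref{aug9.3} asserts exactly the bound on these two-point expectations needed to make the resulting double sum summable in $n$ with a power-of-two rate, giving Cauchy-ness and a continuous limit via a maximal inequality applied to the martingale differences. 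The limit is increasing and adapted because each approximant is, and $\Psi_t(D) + \Theta_t(D)$ is a martingale because the discrete martingale identity on finer and finer grids passes to the limit using $L^2$ convergence together with continuity of $\Psi_t(D)$ in $t$.

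For the second bullet, the content is to prove the two-point bound
\[
\E[M_t(z) M_t(w)] \;\lesssim\; G(z)\, G(w) \cdot K(z,w),
\]
where $K$ is a kernel integrable against $dA(z)\,dA(w)$ on $D \times D$, and to verify that the integrability forces exactly the condition \eqref{kappa}. The standard route is a Girsanov-style change of measure: weighting by $M_t(z)/M_0(z)$ produces an SLE with an extra drift that pushes $V_t$ toward the marked point $z$, after which estimating $\E[M_t(z) M_t(w)]$ becomes estimating $\E[M_t(w)]$ under the tilted measure. Analyzing the joint Bessel-type SDE for $(Z_t(z), Z_t(w))$ under this tilt, one extracts the precise power of $|z-w|/\min(y_z, y_w)$ governing the decay, and a direct computation shows that this kernel is integrable against $G(z)G(w)$ exactly when $16/\kappa + \kappa/16 > 7/2$, i.e.\ $\kappa < \kappa_0$. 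The main obstacle is this two-point estimate: producing sharp enough control on the joint exponent, and tracking the Girsanov error terms carefully enough that the exponent budget actually matches \eqref{kappa}. Once the two-point bound is in hand, the rest of the argument is comparatively standard Doob-Meyer and $L^2$ machinery.
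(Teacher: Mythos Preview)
Your outline for the first bullet is essentially correct and matches the paper: the discrete compensators $\Theta^n_t(D)$ are identified with the sums in \eqref{aug14.1} via the Markov property and the definition of $\phi$ (this is the content of Section~\ref{approxsec}), and then the ``good'' hypothesis \eqref{aug9.3} is fed into a Doob--Meyer-style $L^2$ argument (Section~\ref{DMsec}, following Bass) to produce a continuous limit. One clarification: the hypothesis \eqref{aug9.3} is a bound directly on $\E[(\Theta_{t,n}-\Theta_{s,n})^2]$, not on $\E[M_s(z)M_s(w)]$; for the first bullet you never need to unpack it into a double integral, and the Cauchy argument uses only this black-box moment bound together with the martingale structure of $\Theta_{t,m}-\Theta_{t,n}$.

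For the second bullet your route diverges from the paper's, and the sketch has real gaps. You propose a forward-time Girsanov tilt by $M_t(z)/M_0(z)$ and a bound on $\E[M_t(z)M_t(w)]$. The paper instead passes to the \emph{reverse-time} Loewner flow: since $\hat f_t'\stackrel{d}{=}h_t'$ with $h_t$ solving the backward equation, the objects to control are $F_D(z,w,t)=\E[|h_t'(z)|^d|h_t'(w)|^d\,\I_t(z,w;D)]$. The reverse flow admits explicit one- and two-point positive martingales (Lemma~\ref{jan17.lemma}), and a change-of-variables identity (Lemma~\ref{aug11.lemma4}) collapses the different-time cross terms $\E[I_{j+t,D}I_{t,D}]$ into same-time two-point integrals against $d\mu_{s^2}$. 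The sharp bound for $\kappa<\kappa_0$ then comes from a stopping-time argument in a reparametrization where $Y_t(w)$ grows deterministically (Section~\ref{hardsec}). Two things are missing from your sketch that the reverse-flow machinery handles: (i) the compensator second moment involves the $\phi$ factors and cross terms at \emph{different} times $s_1\ne s_2$, not just $\E[M_s(z)M_s(w)]$ at a single time, and it is precisely Lemma~\ref{aug11.lemma4} that reduces these to tractable same-time quantities; (ii) mere integrability of a kernel $K(z,w)$ is not enough---you need an explicit decay $t^{-\zeta}$ in the two-point bound in order to extract the H\"older exponent $(t-s)^{1+\alpha}$ required by \eqref{aug9.3}. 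The paper's reverse-flow martingales are what deliver both pieces; a forward-time argument might be made to work, but it would need analogues of these two steps, which you have not supplied.
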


\begin{remark}
The hypotheses and conclusion of Proposition \ref{expectationprop} would imply that the
summands in \eqref{aug14.1} are equal to the conditional expectations $$\mathbb E [\Theta_{j 2^{-n}} (D) - \Theta_{(j-1) 2^{-n}}(D) \mid
\F_{(j-1) 2^{-n}}].$$
\end{remark}

\begin{theorem}  \label{newt2}
For all  $\kappa < 8$ and all $t < \infty$.
\begin{equation} \label{newt}
          \lim_{m \rightarrow \infty}
  \E\left[\Theta_t(D_m)\right] < \infty.
\end{equation}
In particular, if $\kappa <8$ is good, then $\Theta_t$
is a continuous process.
\end{theorem}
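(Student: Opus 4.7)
The plan is to use the defining martingale property: since $\Theta_t(D_m) + \Psi_t(D_m)$ is a martingale starting at $\Psi_0(D_m)$,
\[
\E[\Theta_t(D_m)] = \Psi_0(D_m) - \E[\Psi_t(D_m)] = \int_{D_m} \bigl(G(z) - \E[M_t(z)\,1\{T_z>t\}]\bigr)\, dA(z).
\]
Applying the scaling Lemma \ref{scalinglemma} to the non-negative supermartingale $M_t$ (extended by zero past $T_z$), one obtains $\E[M_t(z)\,1\{T_z>t\}] = G(z)\,[1 - \phi(z/\sqrt{t})]$, where $\phi$ is defined by \eqref{phidef2}. Hence
\[
\E[\Theta_t(D_m)] = \int_{D_m} G(z)\,\phi(z/\sqrt{t})\, dA(z),
\]
and after the substitution $z = \sqrt{t}\,w$, using $G(\sqrt{t}\,w) = t^{(d-2)/2} G(w)$, the first claim reduces to showing
\[
\int_\Half G(w)\,\phi(w)\, dA(w) < \infty.
\]

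Near the origin this is clear from $\phi \leq 1$ together with \eqref{greenintegral}. For $|w|$ large I would Taylor-expand at infinity: the Loewner equation gives $g_1(z) = z + a/z + O(|z|^{-2})$ and $g_1'(z) = 1 + O(|z|^{-2})$, so $Z_1(z) = z - V_1 + O(|z|^{-1})$. Expanding $G$ around $z$, the mean-zero linear term in $V_1$ vanishes on taking expectation; the surviving contributions from $\E[V_1^2] = 1$, from the deterministic drift $a/z$, and from the $(2-d)$-th power of $|g_1'(z)|$ combine to give
\[
G(z) - \E[M_1(z)\,1\{T_z>1\}] = O(|z|^{d-4})
\]
uniformly for $z$ bounded away from the real axis. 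Since $d < 2$, the tail $\int_{|w|>1} |w|^{d-4}\, dA(w)$ is a constant multiple of $\int_1^\infty r^{d-3}\, dr < \infty$, which completes the first part. This asymptotic cancellation is the main technical obstacle, since several correction terms of the same order must be tracked to verify that the answer is genuinely $O(|z|^{d-4})$ rather than $O(|z|^{d-3})$.

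For the continuity statement (assuming $\kappa$ is good), set $\Theta_t := \lim_m \Theta_t(D_m)$. Linearity and uniqueness of the Doob--Meyer decomposition imply that $\Theta_t(D_{m'}) - \Theta_t(D_m)$ is the continuous increasing compensator of the supermartingale $\Psi_t(D_{m'}\setminus D_m)$, hence is non-decreasing in $t$ and vanishes at $t=0$. Letting $m' \to \infty$, the process $\Theta_t - \Theta_t(D_m)$ is likewise non-decreasing in $t$, so
\[
\sup_{t \leq T}\bigl(\Theta_t - \Theta_t(D_m)\bigr) = \Theta_T - \Theta_T(D_m).
\]
By the first part together with dominated convergence,
\[
\E[\Theta_T - \Theta_T(D_m)] = \int_{\Half \setminus D_m} G(z)\,\phi(z/\sqrt{T})\, dA(z) \longrightarrow 0,
\]
and monotonicity in $m$ upgrades this to almost sure convergence of $\sup_{t \leq T}(\Theta_t - \Theta_t(D_m))$ to $0$. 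Therefore $\Theta_t(D_m) \to \Theta_t$ uniformly on $[0,T]$ almost surely; since each $\Theta_t(D_m)$ is continuous in $t$, so is $\Theta_t$ on $[0,T]$, and as $T < \infty$ was arbitrary, $\Theta_t$ is continuous on $[0,\infty)$.
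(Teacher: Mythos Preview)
Your argument for the continuity statement is correct and matches the paper's: once the first part is established, the monotonicity of $t\mapsto \Theta_t-\Theta_t(D_m)$ together with $\E[\Theta_T-\Theta_T(D_m)]\to 0$ gives uniform convergence of continuous processes, hence continuity of the limit.

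The gap is in the finiteness of $\int_\Half G(w)\phi(w)\,dA(w)$. Your Taylor expansion of $G(z)-\E[M_1(z)]$ yields $O(|z|^{d-4})$ only uniformly for $z$ bounded away from the real axis, as you yourself note; yet you then integrate $|w|^{d-4}$ over all of $\{|w|>1\}$ as if the bound were uniform in $\theta_w$. It is not. In fact the integrand $G(w)\phi(w)$ is supported on a strip $\{y_w\le c\}$: since $\partial_t Y_t^2\ge -2a$ under \eqref{may19.1}, one has $T^*_z\ge y_z^2/(2a)$, so $\phi(z)=0$ once $y_z^2>2a$. Hence for large $|w|$ the entire mass of $G\phi$ lies in the regime $\sin\theta_w\asymp 1/|w|$, precisely where your expansion breaks down: the angular derivatives of $G$ carry negative powers of $\sin\theta_w$, and the remainder estimate degenerates. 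The integral $\int_{|w|>1}|w|^{d-4}\,dA(w)$ therefore does not control the quantity you need.

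The paper bypasses this by controlling $\phi$ directly. After the time-change that makes $Y$ decay exponentially, the Girsanov-weighted process becomes a one-dimensional diffusion (the lemma leading to \eqref{aug5.1}), from which one reads off the bound \eqref{phiestimate}: $\phi(x+iy)\le c\,1\{y\le 2a\}e^{-\beta x^2}$. With this, $\int_\Half G\phi\,dA$ reduces to an explicit double integral over a horizontal strip whose convergence for all $\kappa<8$ is a routine exponent check; equivalently the paper packages this as Theorem~\ref{firsttheorem} via the reverse-time flow. Your expansion could perhaps be salvaged by working in the strip and expanding in $x$ alone for fixed $y$ (the random shift $V_1$ being real, the imaginary part is essentially preserved), but that would require tracking the $y$-dependence of all remainders and is considerably more work than the paper's route.
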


\noindent {\bf Sketch of proofs}  The remainder of this paper
is dedicated to proving these theorems. For Theorem \ref{maint},
we start by discretizing time and finding an approximation for
$\Theta_t(D)$.  This is done in Sections \ref{forwardsec}
and \ref{approxsec} and leads to the sum in \eqref{aug14.1}.
This time discretization
is the first step in proving the Doob-Meyer Decomposition
for any  supermartingale.  The difficult step comes in taking
the limit.  For general supermartingales, this is subtle and
one can only take a weak limit, see \cite{Meyer}.  However,
if there are uniform second moment estimates for the approximations,
one can take a limit both in $L^2$ and with probability one.
We state the estimate that we will use in \eqref{aug9.3}, and
we call $\kappa$ good if such an estimate exists.  For
completeness, we give a proof of the convergence in Section
\ref{DMsec} assuming this bound; this section is similar to a
proof of the Doob-Meyer Decomposition for $L^2$ martingales
in \cite{Bass}. H\"older continuity of the paths follows.
The hardest part is proving \eqref{aug9.3} and
this is done in Section \ref{momentsec}.  Two arguments are given:
one  easier proof that works for $\kappa < 4$ and a more
complicated argument that works for $\kappa < \kappa_0$.  We conjecture
that all $\kappa < 8$ are good.   In this section
we also establish \eqref{newt} for all $\kappa < 8$
 (see Theorem \ref{firsttheorem}).  Since $t \mapsto
\Theta_t - \Theta_t(D_m)$ is increasing in $t$, and
$\Theta_t(D_m)$ is continuous in $t$ for good $\kappa$,
the final assertion in Theorem \ref{newt2} follows immediately.

 \medskip

Before proceeding, let
us derive some simple scaling relations.  It is well known
that if $g_t$ are the conformal maps for $SLE_\kappa$ and
$r > 0$, then $\tilde g_t(z) := r^{-1} \, g_{tr^2} (rz)$
has the same distribution as $g_t$.  In fact, it is
the solution of the Loewner equation with driving function
$\tilde V_t = r^{-1} \, V_{r^2t}$.
 The corresponding local martingale
is
\begin{eqnarray*}
 \tilde M_t(z) =
 |\tilde g_{t}'(z)|^{2-d} \, G(\tilde g_t(z) - \tilde V_t)
&  = & | g_{tr^2}'(rz)|^{2-d}
  \, G(r^{-1} Z_t(z)) \\
 & = & r^{2-d} \, M_{r^2t}(rz),
\end{eqnarray*}
\[ \tilde \Psi_t(D) =: \int_D \tilde M_t(z) \, dA(z)
    =  r^{2-d} \int_D  M_{r^2t}(rz) \, dA(z) =
r^{-d } \, \Psi_{r^2t}(rD) . \]
Hence, if $\Psi_t(rD) + \Theta_t(rD)$ is a local martingale, then
so is $\tilde \Psi_t(D) + \tilde \Theta_t(D) , $
where
\[           \tilde \Theta_t(D) = r^{-d} \, \Theta_{r^2t}(rD).\]
This scaling rule implies
that it suffices to prove that $\Theta_t(D)$ exists for $0 \leq t \leq
1$.

\subsection{The forward-time local martingale}    \label{forwardsec}

The process $\Psi_t(D)$ is defined in terms of
the family of local martingales $M_t(z)$ indexed by
starting points $z \in \Half$.  If   $z \not\in \gamma(0,t]$, then $M_t(z)
  $ has a heuristic
interpretation as the (appropriately normalized limit of the)
probability that $z \in \gamma[t,\infty)$
given $\gamma(0,t]$.

Let
$Z_t,X_t,Y_t,R_t,\distsub_t,M_t$ be as defined in Section \ref{notationsection}, recalling
that these quantities implicitly depend on the starting point
$z \in \Half$.
The Loewner equation  can be written as
\begin{equation}  \label{mar1.0}
  dX_t = \frac{aX_t}{X_t^2 + Y_t^2} \, dt + d B_t,
\;\;\;\;\;  \p_tY_t = - \frac{aY_t}{X_t^2 + Y_t^2} ,
\end{equation}
and using It\^o's formula and the chain rule  we see that
if $t < T_z$,
\begin{equation}  \label{mar1.1}
      \p_t \distsub_t = - \distsub_t \, \frac{2aY_t^2}{
   (X_t^2 + Y_t^2)^2},      \;\;\;\;
      d M_t =  M_t \, \frac{(1-4a) \, X_t}
   {X_t^2 + Y_t^2} \, dB_t.
\end{equation}
It is straightforward to
 check that with probability one
\begin{equation}  \label{apr17.2}
      \sup_{0 \leq s < T_z \wedge t} \, M_s(z)
                      \left\{ \begin{array}{ll}= \infty ,& \mbox{ if }
   z \in \gamma(0,t]\\
     <\infty, & \mbox{ otherwise. } \end{array}\right.
\end{equation}
Moreover,
if $4 < \kappa < 8$ and $z \not \in \gamma(0,\infty)$, then
$T_z < \infty$ and
\[
           M_{T_z-}(z) = 0 .
\]
In other words, if we extend $M_t(z)$ to $t \geq T_z$ by
$M_{t}(z) = M_{T_z-}(z),$   then for
$z \not\in \gamma(0,\infty)$, $  M_t(z)$ is continuous in $t$ and
equals zero if $t \geq T_z$.
Since $\gamma(0,\infty)$ has zero area, we can write
\begin{equation}  \label{jul25.6}
 \Psi_t(D) =
 \int_D M_t(z) \, dA(z)   = \int_D M_t(z) \, 1\{T_z > t\}
  \, dA(z).
\end{equation}

\begin{proposition}  If $z \in \Half$, $M_t = M_t(z)$
is a local martingale  but not
a martingale.  In fact,
\begin{equation}  \label{apr5.1}
  \E[M_t] = \E[M_0] \, [1- \phi(z;t)] =G(z)
 \,  [1- \phi(z;t)] .
\end{equation}
Here  $\phi(z;t) = \Prob\{T_z^* \leq t\}$
is the distribution function of
\[  T^*_z = \inf\{t: Y_t = 0 \}, \]
where $X_t+ i Y_t$ satisfies
\begin{equation}  \label{may19.1}
    dX_t = \frac{(1-3a)\, X_t}{X_t^2 + Y_t^2} \, dt
             + dW_t, \;\;\;\;    \p_tY_t = - \frac{a\, Y_t}
  {X_t^2 + Y_t^2},  \;\;\;\; X_0 +i Y_0 = z,
\end{equation}
and $W_t$ is a standard Brownian motion.
\end{proposition}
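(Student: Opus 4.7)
The local martingale property is immediate from \eqref{mar1.1}, which exhibits $M_t$ as a stochastic integral against $B_t$ with no drift term. The heart of the proposition is the formula \eqref{apr5.1}, and I would prove it by Girsanov change of measure.

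Formally, $L_t := M_t/M_0$ is an exponential local martingale with predictable volatility $\sigma_s := (1-4a)X_s/(X_s^2+Y_s^2)$. Pretend for a moment that $L_t$ is a true martingale; then weighting by $L_t$ gives a new probability measure under which, by Girsanov, the shifted process $\tilde B_t := B_t - \int_0^t \sigma_s\,ds$ is a Brownian motion. Substituting into the SDE \eqref{mar1.0} for $X_t$, the drift becomes
\[
\frac{aX_t}{X_t^2+Y_t^2} + \frac{(1-4a)X_t}{X_t^2+Y_t^2} = \frac{(1-3a)X_t}{X_t^2+Y_t^2},
\]
which is exactly the $X$-equation of \eqref{may19.1}. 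The $Y$-equation carries no Brownian term and so is unchanged by Girsanov. Thus under the tilted law, $(X_t,Y_t)$ evolves as in \eqref{may19.1}, and $T_z^*$ is the time at which $Y$ first reaches $0$ under that law.

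To make this rigorous, I would localize. Set
\[
\tau_n := \inf\{t \geq 0 : Y_t \leq 1/n \text{ or } |X_t| \geq n\} \wedge n.
\]
On $[0,\tau_n]$ the exponent $\sigma_s$ is bounded, so $L_{t\wedge \tau_n}$ is a bounded true martingale of mean $1$, and $d\Prob^{(n)} := L_{\tau_n}\,d\Prob$ defines a probability measure on $\F_{\tau_n}$. By Girsanov, under $\Prob^{(n)}$ the pair $(X_t,Y_t)_{t\leq \tau_n}$ solves \eqref{may19.1}. The family $\{\Prob^{(n)}\}$ is consistent and determines a single measure $\tilde\Prob$ on the canonical path space under which \eqref{may19.1} holds globally; under $\tilde\Prob$ the stopping times $\tau_n$ increase to $T_z^*$. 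The change-of-measure identity reads
\[
\E[M_t\,1\{\tau_n>t\}] = \E[M_{t\wedge\tau_n}\,1\{\tau_n>t\}] = G(z)\,\Prob^{(n)}\{\tau_n>t\}.
\]
Letting $n \to \infty$, monotone convergence on the left (with the convention $M_t \equiv 0$ for $t \geq T_z$, justified by \eqref{apr17.2} and the ensuing discussion) together with $\Prob^{(n)}\{\tau_n>t\} \to \tilde\Prob\{T_z^*>t\} = 1-\phi(z;t)$ on the right yields \eqref{apr5.1}.

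Finally, that $M_t$ is not a true martingale reduces to $\phi(z;t) > 0$ for some $t > 0$. From $\p_t \log Y_t = -a/(X_t^2+Y_t^2)$, $T_z^* < \infty$ iff $\int_0^{T_z^*} ds/(X_s^2+Y_s^2)$ diverges; a direct analysis of $|Z_t|^2$ under \eqref{may19.1} (or a Bessel-type comparison) shows this occurs in finite time with positive probability. I expect the main obstacle to be the rigorous limit step: identifying $\tau_n$ on the canonical space for $\tilde\Prob$ with its original-space counterpart and verifying $\tau_n \uparrow T_z^*$ under $\tilde\Prob$. The care is needed because $\tilde\Prob$ is not absolutely continuous with respect to $\Prob$ globally—the singular part encodes precisely the mass loss $\phi(z;t)$—so one cannot simply transfer convergence from one space to the other.
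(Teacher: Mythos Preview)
Your proof is correct and follows essentially the same approach as the paper: localize, apply Girsanov to identify the tilted dynamics as \eqref{may19.1}, and read off the mass defect of the supermartingale $M_t$ as $G(z)\,\phi(z;t)$. The only cosmetic differences are that the paper localizes via $\tau_n=\inf\{t:M_t\geq n\}$ and tracks the complementary quantity $\E[M_{\tau_n};\tau_n\leq t]$ rather than $\E[M_t;\tau_n>t]$, and that your $L_{t\wedge\tau_n}$ is a uniformly integrable (not a.s.\ bounded) martingale---which is all you need.
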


\begin{proof}  The fact that $M_t$ is a local martingale
follows immediately from
\[              dM_t = \frac{(  1-4a)\, X_t}{X_t^2 + Y_t^2}
  \, M_t \, dB_t. \]
To show that $M_t$ is not a martingale, we will
consider $\E[M_t]$.
For every $n$, let $\tau_n = \inf\{t: M_t \geq n\}$.
Then
\[ \E[M_t] = \lim_{n \rightarrow \infty}
   \E[M_t; \tau_n > t] = \E[M_0] - \lim_{n \rightarrow
 \infty} \E[M_{\tau_n}; \tau_n \leq t ]. \]
If $z \not\in \gamma(0,t]$, then
$M_t(z) < \infty$.
Therefore
\[ \lim_{n \rightarrow
 \infty} \E[M_{\tau_n}; \tau_n \leq t ] \]
denotes the probability that the process $Z_t$ weighted
(in the sense of the Girsanov Theorem)
by $M_t$ reaches zero before time $t$.
We claim that for $t$ sufficiently large,
\begin{equation}  \label{apr17.1}
   \lim_{n \rightarrow
 \infty} \E[M_{\tau_n}; \tau_n \leq t ] > 0 .
\end{equation}
We verify this by using the Girsanov theorem. For fixed $n$,
$M_{t,n}:= M_{t \wedge \tau_n}$ is a nonnegative martingale
satisfying
\[              dM_{t,n} = \frac{(1-4a)\, X_t}{X_t^2 + Y_t^2}
  \, M_{t,n} \, 1\{\tau_n > t\} \,  dB_t. \]
The Girsanov transformation considers the paths under the
new measure $Q = Q^{(n)}$ defined by
\[         Q(E) =  M_{0}^{-1} \, \E[M_{t,n} \, 1_E] \]
if $E$ is ${\cal F}_t$-measurable.   The Girsanov theorem tells us that
in the new measure, $X_t$ satisfies \eqref{may19.1}
where $W_t$ is a standard Brownian motion in the new
measure.    It is fairly straightforward to show that
if $(X_t,Y_t)$ satisfy \eqref{may19.1} and $a > 1/4$,
then $Y_t$ reaches zero in finite time.
\end{proof}

The process satisfying \eqref{may19.1} is called
{\em two-sided radial $SLE_{2/a}$ from  $0$ and $\infty$
to $z$ in $\Half$}.
Actually, it is the distribution only of one of the two arms, the
arm from $0$ to $z$.  Heuristically, we think of this as $SLE_{2/a}$
from $0$ to $\infty$ conditioned so that $z$ is on the path.
Let $T_z^* = \inf\{t: Z_t = 0\}$ where $Z_t = X_t + i Y_t$
satisfies  \eqref{may19.1} with $Z_0 = z.$
  We have noted that $\Prob\{\tau < \infty\}
= 1$.   The function $\phi(z;t)$ will be important.  We define
\begin{equation}  \label{phidef}
    \phi(z) = \phi(z;1) . \end{equation}
 \begin{equation}  \label{apr5.1.plus} \phi_t(z) = \Prob\{t \leq \tau_z \leq t+1\}
  = \phi(z;t+1) - \phi(z;t) .
\end{equation}
In particular, $\phi_0(z) = \phi(z)$.
The scaling properties of $SLE$ imply
\[ \phi(z;t) =\phi(z/\sqrt t),\]

Let $Q = Q_z$ be the probability measure obtained by weighting by
the local martingale $M_t(z)$.  Then $\phi(z;t)$ denotes the
distribution function of $T = T_z$ in the measure $Q$.   If $t,s > 0$, then
\[    Q[t < T < t+ s \mid \F_t]
                    =  \phi(Z_t(z);s) \, 1\{T > t\}. \]
Taking expectations, we get
\begin{equation}  \label{aug13.1}
 \E\left[M_t(z) \, \phi(Z_t(z);s)\right] = G(z) \, [\phi(z;t+s)
   - \phi(z;t)].
\end{equation}
The next lemma describes
 the distribution of $T$ under $Q$
in terms of a functional of a
simple one-dimensional
diffusion.

\begin{lemma}  Suppose $a >1/4$ and
$X_t + iY_t$ satisfies
\[   dX_t = \frac{(1-3a)\,
 X_t}{X_t^2 + Y_t^2} \, dt + dW_t,\;\;\;\;\;
     \p Y_t = - \frac{aY_t}{X_t^2 + Y_t^2} dt,\;\;\;\;
   X_0  = x, \;\; Y_0 =1 , \]
where $W_t$ is a standard Brownian motion.
Let
\[  T = \sup\{t: Y_t > 0\}.\]
Then
\[  T = \int_0^{\infty} e^{-2as} \, \cosh^2 J_s \, ds
   = \frac{1}{4a} + \frac 12 \int_0^{\infty}
 e^{-2as} \, \cosh(2J_s) \, ds , \]
where $J_t$ satisfies
\begin{equation}  \label{aug5.1}
   dJ_t =  (\frac 12-2a) \, \tanh J_t \, dt + dW_t, \;\;\;\;
       \sinh J_0 = x.
\end{equation}
\end{lemma}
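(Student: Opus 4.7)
The plan is to apply a single deterministic time change $v(t) := \int_0^t \frac{ds}{X_s^2+Y_s^2}$, chosen so that $Y$ evolves deterministically at constant speed in $v$-time while the drift of $X/Y$ becomes position-independent. First, the ODE for $Y$ reads $\partial_t \log Y_t = -a/(X_t^2+Y_t^2)$, which in $v$-time becomes $\partial_v \log Y = -a$, integrating to $Y_{t(v)} = e^{-av}$ (since $Y_0 = 1$). Because $Y_t \searrow 0$ as $t \to T^-$ by the definition of $T$, this identity forces $v(t) \to \infty$ as $t \to T^-$, so the time change runs all the way through $[0,\infty)$.

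Next, I apply the standard time-change formulas for semimartingales (Dambis--Dubins--Schwarz). In $v$-time the $X$ equation becomes
\[ dX_v = (1-3a) X_v\, dv + \sqrt{X_v^2 + Y_v^2}\, d\tilde B_v, \]
for some standard Brownian motion $\tilde B_v$. Forming $\tilde X_v := X_v/Y_v$ via the product rule and using $dY_v = -aY_v\, dv$ produces the autonomous SDE
\[ d\tilde X_v = (1-2a)\tilde X_v\, dv + \sqrt{\tilde X_v^2 + 1}\, d\tilde B_v. \]
The substitution $\tilde X_v = \sinh J_v$ is natural because $\sqrt{\tilde X_v^2+1} = \cosh J_v$. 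A routine application of Ito's formula, using $d(\sinh J) = \cosh J\, dJ + \tfrac12 \sinh J\, d\langle J\rangle$ and matching coefficients, yields exactly $dJ_v = (\tfrac12 - 2a)\tanh J_v\, dv + d\tilde B_v$ with $\sinh J_0 = x$, matching \eqref{aug5.1}.

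Finally, to recover $T$, I write
\[ dt = (X_v^2+Y_v^2)\, dv = Y_v^2(\tilde X_v^2+1)\, dv = e^{-2av}\cosh^2 J_v\, dv, \]
and integrate from $v=0$ to $v=\infty$ to obtain the first identity. The second form is immediate from $\cosh^2 J = \tfrac12(1+\cosh 2J)$ combined with $\int_0^\infty e^{-2av}\, dv = 1/(2a)$.

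The main judgment call is identifying the time change $v = \int ds/(X^2+Y^2)$ as the one that simultaneously trivializes the $Y$ dynamics and renders the $X/Y$ SDE autonomous; after that every step is a routine Ito or chain-rule computation. I do not foresee a serious obstacle: the one detail worth being careful about is $v(T^-)=\infty$, but this drops out immediately from the identity $Y_{t(v)} = e^{-av}$ together with monotonicity of $Y_t$ and $Y_t \searrow 0$ as $t \to T^-$.
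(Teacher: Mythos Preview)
Your proof is correct and takes essentially the same approach as the paper: the paper defines the time change as $\sigma(s) = \inf\{t: Y_t = e^{-as}\}$, which is exactly the inverse of your $v(t) = \int_0^t ds/(X_s^2+Y_s^2)$, and then proceeds through the identical intermediate variable $K_s = X_{\sigma(s)}/Y_{\sigma(s)} = \sinh J_s$. Your version is arguably a bit cleaner in that you explicitly justify $v(T^-)=\infty$ and distinguish the time-changed Brownian motion $\tilde B$ from the original $W$, whereas the paper reuses the letter $W$ for both.
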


\begin{proof}
Define the time change
\[  \sigma(s) = \inf\{t: Y_t = e^{-as} \}. \]
Let $\hat X_s = X_{\sigma(s)}, \hat Y_s
 = Y_{\sigma(s)} = e^{-as}$.  Since
\[   -a \, \hat Y_t= \p_t \hat Y_t = - \dot \sigma(t) \,
\frac {a \, \hat Y_t^2} {\hat X_t^2 + \hat Y_t^2},\]
we have
\[          \dot \sigma(s) = \hat X_t^2 + \hat Y_t^2
   = e^{-2as} \, [K_s^2 + 1] , \]
where $K_s = e^{as} \hat X_s$.
Note that
\[  d\hat X_s = \left(\frac 12 - 3a\right) \, \hat X_s \, ds
             + e^{-as} \, \sqrt{K_s^2 + 1} \, dW_s. \]
\begin{equation}
  \label{aug5.2}
  dK_s =   \left(1-2a\right) \, K_s \, ds +
             \sqrt{K_s^2 + 1} \, dW_s,\;\;\;\; K_0 = x .
\end{equation}
Using It\^o's formula we see that if   $J_s$
satisfies \eqref{aug5.1} and $K_s = \sinh(J_s)$,
then $K_s$ satisfies \eqref{aug5.2}.
Also,
\[ \sigma(\infty) = \int_0^\infty \dot \sigma(s) \, ds
   = \int_0^\infty e^{-2as} \, [K_s^2 +1]\, ds
       = \int_0^\infty e^{-2as} \, \cosh^2 J_s \, ds.\]
\end{proof}

Using the lemma one can readily see that there exist $c,\beta$
such that
\begin{equation}  \label{phiestimate}
   \phi(s(x+iy); s^2) =
\phi(x+iy) \leq c \, 1\{y \leq 2a\} \, e^{-\beta  x^2}.
\end{equation}

\subsection{Approximating $\Theta_t(D)$}  \label{approxsec}

 If $D \in \domain$, the
  change of variables $z = Z_t(w)$ in \eqref{jul25.6}
 gives
\begin{eqnarray}
 \Psi_t(D) &= & \int_\Half
   |g_t'(w)|^{2-d} \, G(Z_t(w)) \, 1\{w \in D\}
  \, dA(w) \nonumber \\
& = & \int_\Half |\hat f_t'(z)|^{d}
  \, G(z) \, 1\{\hat f_t(z) \in D\} \, dA(z) ,  \label{mar1.3}
\end{eqnarray}
\[ \E\left[ \Psi_t(D)\right]
  =
 \int_\Half \E\left[|\hat f_t'(z)|^{d}; \hat f_t(z) \in D\right]
  \, G(z)  \, dA(z).\]

\begin{lemma} If $D \in \domain$, $s,t \geq 0$,
 \begin{equation}  \label{mar1.4}
  \E[  \Psi_{s+t}(D) \mid \F_s] = \Psi_s(D) -
               \int_\Half
 |\hat f_s'(z)|^d  \, G(z) \, \phi(z/\sqrt t)\, 1\{\hat f_s(z) \in D\} \, dA(z)
,\end{equation}
where $\phi(w)$ is as defined in \eqref{apr5.1} and \eqref{apr5.1.plus}.
\end{lemma}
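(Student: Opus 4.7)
The plan is to combine the Markov property for SLE at time $s$ with the expectation formula \eqref{apr5.1} for the local martingale $M_t$, and then convert the resulting expression into the $\hat f_s$-coordinate form via the same change of variables used in \eqref{mar1.3}.

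First I would express $M_{s+t}(z)$ using the standard Markov decomposition of the Loewner chain. Setting $\tilde V_u := V_{s+u}-V_s$ and $\tilde h_u(w) := g_{s+u}(g_s^{-1}(w+V_s))-V_s$, one checks that $\tilde h_u$ is a Loewner chain driven by the Brownian motion $\tilde V_u$, which is independent of $\F_s$. A direct computation gives $Z_{s+u}(z)=\tilde Z_u(Z_s(z))$ and $g_{s+u}'(z)=\tilde h_u'(Z_s(z))\,g_s'(z)$, so
\[
M_{s+t}(z)\,1\{T_z>s+t\}=|g_s'(z)|^{2-d}\,\tilde M_t(Z_s(z))\,1\{\tilde T_{Z_s(z)}>t\}\cdot 1\{T_z>s\},
\]
where $\tilde M_t$ is the analogue of $M_t$ for the independent shifted SLE. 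Taking conditional expectations and using \eqref{apr5.1} for $\tilde M_t$ together with the scaling identity $\phi(w;t)=\phi(w/\sqrt t)$ yields
\[
\E[M_{s+t}(z)\,1\{T_z>s+t\}\mid \F_s] = M_s(z)\,1\{T_z>s\}\,\bigl[1-\phi(Z_s(z)/\sqrt t)\bigr].
\]

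Next I would integrate this identity over $z\in D$. To interchange the integral over $D$ with the conditional expectation, one applies Fubini, justified by the fact that $\E[\Psi_{s+t}(D)]\le \int_D G(z)\,dA(z)<\infty$ on the bounded domain $D\in\domain$ (cf.\ \eqref{greenintegral} and \eqref{mar1.3}). This gives
\[
\E[\Psi_{s+t}(D)\mid \F_s] = \Psi_s(D) - \int_D M_s(z)\,1\{T_z>s\}\,\phi(Z_s(z)/\sqrt t)\,dA(z).
\]
Finally, I would perform the change of variables $w=Z_s(z)$, i.e.\ $z=\hat f_s(w)$, exactly as in the derivation of \eqref{mar1.3}. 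Using $|g_s'(z)|=|\hat f_s'(w)|^{-1}$, one obtains $M_s(z)\,dA(z)=|\hat f_s'(w)|^d\,G(w)\,dA(w)$, while the image of $\{z\in D:T_z>s\}$ under $z\mapsto Z_s(z)$ is $\{w\in\Half:\hat f_s(w)\in D\}$. Substituting, the second integral above becomes
\[
\int_\Half |\hat f_s'(w)|^d\,G(w)\,\phi(w/\sqrt t)\,1\{\hat f_s(w)\in D\}\,dA(w),
\]
which is exactly the expression in \eqref{mar1.4}.

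The only real subtlety is step one: since $M_t(z)$ is a local martingale but \emph{not} a martingale, care is needed when invoking the Markov property. The point is that we are only computing the expectation of $M_{s+t}(z)\,1\{T_z>s+t\}$, which equals $\E[M_{s+t}(z)]$ on the event $T_z>s+t$ (after extending $M$ by $0$ past $T_z$), and \eqref{apr5.1} gives this expectation explicitly under the unconditioned law; the Markov property then transfers this to a conditional statement applied to the independent shifted SLE started from $Z_s(z)$. No supermartingale pathology intervenes because we are applying \eqref{apr5.1} to the \emph{shifted} chain as a standalone SLE, not decomposing $M$ as a conditional expectation of a larger random variable.
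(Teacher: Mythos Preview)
Your proposal is correct and follows essentially the same approach as the paper: compute $\E[M_{s+t}(z)\mid\F_s]$ via the Markov property and \eqref{apr5.1}, integrate over $D$, then change variables $z\mapsto Z_s(z)$ as in \eqref{mar1.3}. You are simply more explicit than the paper about the Markov decomposition, the Fubini justification, and the local-martingale caveat.
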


\begin{proof}  Recalling the
definition of  $\phi(w;t)$  in \eqref{apr5.1},
we get
\begin{eqnarray*}
\lefteqn{\E[  \Psi_{s+t}(D) \mid \F_s]}  \\
 &  = &
  \E \left [ \int_D  M_{s+t}(w) \, dA(z)
\left|\right.
  \F_s \right]  \\
    & = &  \int_D  \E[ M_{s+t}(w) \mid \F_s]
                  \, dA(w)  \\
   & = & \int_D  M_s(w) \,[1 - \phi(Z_s(w);t) ]\, dA(w)
  \\
  & = & \Psi_s(D)-\int_D  M_s(w) \,  \phi(Z_s(w);t)   \, dA(w), \\
& = & \Psi_s(D)-\int_\Half  |g_s'(w)|^{2-d}
\, G(Z_s(w)) \,  \phi(Z_s(w);t)  \, 1\{w \in D\} \, dA(w).
\end{eqnarray*}
If we use the change of variables  $z = Z_s(w)$ and the scaling
rule for $\phi$, we get
\eqref{mar1.4}.\end{proof}

Using the last lemma, we see that a
 natural candidate for the process $\Theta_t(D)$ is given
by
\[ \Theta_t(D) = \lim_{n \rightarrow \infty} \Theta_{t,n}(D),\]
where
\begin{eqnarray*}
  \Theta_{t,n}(D) & = &
 \sum_{j \leq t2^n} \E[\Psi_{j2^{-n}}(D) - \Psi_{(j-1)
  2^{-n}}(D) \mid \F_{(j-1)2^{-n}}]\\
  & = & \lim_{n \rightarrow \infty}
  \sum_{j \leq t2^n} I_{j,n} (D)  ,
\end{eqnarray*}
and
  \begin{equation}\label{aug9.1}
    I_{j,n} (D) =
     \int_\Half |
 \hat f_{(j-1)2^{-n}}'(z)|^d \,\phi(z 2^{n/2})\,  G(z)
   \, 1\{\hat f_{(j-1)2^{-n}}(z) \in D \} \, dA(z) .
\end{equation}
Indeed, it is immediate that for fixed $n$
\[    \Psi_{t,n}(D) + \Theta_{t,n}(D) \]
restricted to $t = \{k2^{-n}: k=0,1,\ldots\}$ is a
martingale.

To take the limit, one needs further conditions. One
sufficient condition (see Section \ref{DMsec}) is
a second moment condition.

\begin{definition}  We say that $\kappa $ (or $a =2/\kappa$)
is {\em good} with exponent $\alpha > 0$
  if $\kappa < 8$ and
 the following holds.
For every  $D \in
 \domain$
there exist
$c < \infty$ such that for all $n$,  and
 all $s, t \in \dyadic_n$ with $0 < s<t \leq 1$,
\begin{equation}  \label{aug9.3}
 \E\left[[\Theta_{t,n}(D) - \Theta_{s,n}(D)]^2\right]
          \leq c  \, (t-s)^{1 + \alpha}
   .
\end{equation} We say $\kappa$ is {\em good} if this holds for some $\alpha > 0$
\end{definition}

By scaling we see that
$I_{j,n}(D)$ has the same distribution as
\[        \int_\Half |
 \hat f_{j-1}'(z2^{n/2} )|^d \,\phi(z  2^{n/2} )\,  G(z)
   \, 1\{\hat f_{j-1}(z2^{n/2}) \in 2^{n/2}\, D \} \, dA(z)
  , \]
and the change of variables $w = z2^{n/2}$ converts this integral to
\[       2^{-nd/2}  \int_\Half |
 \hat f_{j-1}'(w)|^d
   \, 1\{\hat f_{j-1}(w) \in 2^{n/2} \, D\} \, d\mu(w),\]
where $d\mu(w) = \phi(w) \, G(w) \, dA(w)$.
Therefore,
\[  \Theta_{t,n}(D) := \sum_{
j \leq t 2^n}  I_{j,n}(D) \]
has the same distribution as
\begin{equation}  \label{jan17.1}
     2^{-nd/2}   \sum_{j \leq t2^{n}}
  \int_\Half |
 \hat f_{j-1}'(w)|^d
   \, 1\{\hat f_{j-1}(w) \in 2^{n/2} \, D\} \, d\mu(w).
\end{equation}

\section{Doob-Meyer decomposition} \label{DMsec}

In this section, we give a  proof of the Doob-Meyer
decomposition for submartingales satisfying
a second
moment bound.  Although we will apply the results
in this section to $  \Theta_{t}(D)$
with $D \in \domain$, it will be easier to abstract the
argument and write just $L_t=-\Theta_t$.
   Suppose $L_t$ is a submartingale
with respect to $\F_t$ with $L_0 = 0$.
We call
a finite subset of $[0,1]$,
  $\dyadic$,
which contains $\{0,1\}$ a {\em partition}.
We can write the elements of a partition as
\[     0= r_0 < r_1 < r_2 < r_k = 1 . \]
We define
\[  \|\dyadic\| = \max \{r_j - r_{j-1}\} , \;\;\;\;\;
    c_*(\dyadic) =  \|\dyadic\|^{-1}  \min
\{r_j - r_{j-1} \}. \]

\begin{definition}  Suppose $\dyadic_n$ is a sequence
of partitions.
\begin{itemize}
\item If $\dyadic_1 \subset \dyadic_2 \subset
\cdots,$ we call the sequence {\em increasing} and let
$\dyadic = \cup \dyadic_n$.
\item  If there exist $0 < u,c<\infty$ such that for each
$n$, $\|\dyadic_n \| \leq c e^{-un}$ we call the
sequence  {\em geometric}.
\item  If there exists $c > 0 $ such that for all $n$,
$c_*(\dyadic_n) \geq c$, we call the sequence
{\em regular}.
\end{itemize}
\end{definition}

The prototypical example of an increasing, regular, geometric
sequence of partitions is the dyadic rationals
\[  \dyadic_n = \{k2^{-n}: k=0,1,\ldots,2^n\}. \]

Suppose $L_t, 0 \leq t \leq 1$
 is a submartingale with respect to
$\F_t$.  Given a sequence of partitions $\dyadic_n$ there exist
increasing
processes  $\Theta_{r,n}, r \in \dyadic_n$ such that
\[            L_r - \Theta_{r,n} , \;\;\;\;
    r \in \dyadic_n \]
is a martingale.  Indeed if $\dyadic_n$ is given by
$0 \leq r_0 < r_1 <  \ldots < r_{k_n}=1$, then we can define
the increasing process by
$\Theta_{0,n} = 0$ and recursively
\[    \Theta_{r_j,n} = \Theta_{r_{j-1},n} +
    \E\left[ L_{r_j} - L_{r_{j-1}}  \mid \F_{r_{j-1}}\right]
.\]
Note $ \Theta_{r_j,n} $ is $\F_{r_{j-1}}$-measurable
and if $s,t \in \dyadic_n$ with $s < t$,
\begin{equation}  \label{apple.11}
\E[\Theta_{t,n} \mid \F_s] =
             \Theta_{s,n} + \E[ L_t - L_s \mid \F_s] .
\end{equation}
The proof of the next proposition follows the proof of
the Doob-Meyer Theorem in \cite{Bass}.

\begin{proposition}  Suppose $\dyadic_n =\{r(j,n);j=0,1,\ldots,k_n\}
$ is an increasing
sequence of partitions with
\[  \delta_n := \|\dyadic_n\| \rightarrow 0. \]
Suppose there exist
$\beta > 0$ and $c < \infty
 $ such that for all $n$ and all  $s,t \in
\dyadic_n$
\begin{equation} \label{momentcond}
    \E\left[(\Theta_{t,n} - \Theta_{s,n})^2\right]
    \leq  c\, (t-s)^{\beta + 1}.
\end{equation}
Then there exists an increasing, continuous process
$\Theta_t$ such that $L_t - \Theta_t$ is a martingale.
Moreover, for each $t \in \dyadic$,
\begin{equation}  \label{apple.53}
            \Theta_t = \lim_{n \rightarrow \infty} \Theta_{t,n},
\end{equation}
where the limit is in $L^2$.  In particular, for all $s< t$,
\[ \E\left[(\Theta_{t} - \Theta_{s})^2\right]
    \leq   c\, (t-s)^{\beta + 1}. \]
If $u < \beta/2$, then with probability one, $\Theta_t$ is
H\"older continuous of order $u$.
If the sequence is geometric (i.e., if $\delta_n \to 0$ exponentially in $n$), then the limit in
\eqref{apple.53} exists with probability one.
\end{proposition}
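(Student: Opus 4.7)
The plan is to follow the classical Doob--Meyer approach, using the uniform moment bound \eqref{momentcond} to establish $L^2$-convergence of the discretized decompositions $\Theta_{t,n}$ at each $t \in \dyadic$, and then to upgrade to a continuous increasing process on $[0,1]$. The heart of the argument is the Cauchy estimate. Fix $n \leq m$; since $\dyadic_n \subset \dyadic_m$, the difference process $D^{n,m}_t := \Theta_{t,m} - \Theta_{t,n}$, indexed by $t \in \dyadic_n$, is itself a martingale: subtracting the two identities $\E[L_t - \Theta_{t,n} \mid \F_s] = L_s - \Theta_{s,n}$ and $\E[L_t - \Theta_{t,m} \mid \F_s] = L_s - \Theta_{s,m}$ (both valid for $s < t$ in $\dyadic_n \subset \dyadic_m$) eliminates $L$ and yields $\E[D^{n,m}_t \mid \F_s] = D^{n,m}_s$. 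Orthogonality of martingale increments, combined with \eqref{momentcond} applied separately to $\Theta_{\cdot,n}$ and $\Theta_{\cdot,m}$ on each interval $[r_{j-1}, r_j]$ of $\dyadic_n$, gives
\[
\E\bigl[(\Theta_{1,m} - \Theta_{1,n})^2\bigr] \;=\; \sum_j \E\bigl[(D^{n,m}_{r_j} - D^{n,m}_{r_{j-1}})^2\bigr] \;\leq\; 4c \sum_j (r_j - r_{j-1})^{\beta+1} \;\leq\; 4c\,\delta_n^\beta .
\]
Since $\delta_n \to 0$, $(\Theta_{1,n})$ is $L^2$-Cauchy, and the same argument at any $t \in \dyadic$ (once $n$ is large enough that $t \in \dyadic_n$) shows $L^2$-Cauchyness everywhere on $\dyadic$. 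This orthogonality step, which uses the nested-partition hypothesis in an essential way, is the main obstacle; everything else is driven by the resulting moment estimate for the limit.

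In the second step, I would define $\Theta_t$ for $t \in \dyadic$ as this $L^2$-limit, which is automatically $\F_t$-measurable, and pass \eqref{momentcond} to the limit to obtain $\E[(\Theta_t - \Theta_s)^2] \leq c(t-s)^{\beta+1}$ for $s, t \in \dyadic$. A standard Borel--Cantelli / Kolmogorov--Chentsov argument (the probability that an increment over a sub-interval of $\dyadic_n$ exceeds its length to the power $u$ is summable in $n$ whenever $u < \beta/2$) then shows that, almost surely, $t \mapsto \Theta_t$ restricted to $\dyadic$ is H\"older continuous of every order $u < \beta/2$, and so extends uniquely to a continuous process on $[0,1]$ inheriting the same bounds. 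Monotonicity passes to the limit: for $s < t$ in $\dyadic$ and $n$ with $s, t \in \dyadic_n$ one has $\Theta_{s,n} \leq \Theta_{t,n}$, and taking $L^2$-limits (together with the countability of $\dyadic \times \dyadic$) gives $\Theta_s \leq \Theta_t$ almost surely for all such pairs simultaneously; continuity extends this to all of $[0,1]$.

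Finally, the identity $\E[L_t - \Theta_{t,n} \mid \F_s] = L_s - \Theta_{s,n}$ for $s < t$ in $\dyadic_n$, together with the $L^2$-convergence of $\Theta_{\cdot,n}$ (which is preserved by conditional expectation), gives $\E[L_t - \Theta_t \mid \F_s] = L_s - \Theta_s$ for $s < t$ in $\dyadic$, and thus for all $s, t \in [0,1]$ via continuity of $\Theta$ and (in the envisaged applications) right-continuity of $L$. For almost sure convergence in the geometric case $\delta_n \leq c e^{-un}$, I would apply Doob's $L^2$ maximal inequality to the martingale $D^{n,n+1}$ on the finite set $\dyadic_n$, getting
\[
\E\!\left[\sup_{t \in \dyadic_n}(\Theta_{t,n+1} - \Theta_{t,n})^2\right] \;\leq\; 4\,\E\bigl[(\Theta_{1,n+1} - \Theta_{1,n})^2\bigr] \;\leq\; 16c\,\delta_n^\beta ,
\]
which is summable in $n$ thanks to the geometric decay; Chebyshev and Borel--Cantelli then give uniform almost sure convergence of $\Theta_{\cdot,n}$ on $\dyadic$, hence on $[0,1]$ after continuous extension.
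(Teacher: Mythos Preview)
Your proof is correct and in fact streamlines the paper's argument at the crucial Cauchy step. Both proofs recognize that $D^{n,m}_t=\Theta_{t,m}-\Theta_{t,n}$ is a martingale on $\dyadic_n$ and expand $\E[(D^{n,m}_1)^2]$ as the sum of squared increments. The paper (following Bass) then bounds each increment pointwise by the maximal jump $\Delta(n,n)+\Delta(n,m)$, uses monotonicity to control $\sum_j|D^{n,m}_{r_j}-D^{n,m}_{r_{j-1}}|\le\Theta_{1,n}+\Theta_{1,m}$, and finishes with Cauchy--Schwarz. You instead apply $(a-b)^2\le 2a^2+2b^2$ to each increment and invoke \eqref{momentcond} directly for both $\Theta_{\cdot,n}$ and $\Theta_{\cdot,m}$ at the points $r_{j-1},r_j\in\dyadic_n\subset\dyadic_m$. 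This is legitimate because the hypothesis is assumed uniformly in the partition index, and it bypasses the maximal-increment and monotonicity machinery entirely. The Bass route is what one needs in the general Doob--Meyer setting (where only a class-D-type control on $\Delta$ is available), but under the strong second-moment hypothesis here your shortcut is both valid and cleaner, yielding the rate $\delta_n^\beta$ with no loss. The remainder of your argument---passing the moment bound to the limit, Kolmogorov--Chentsov for H\"older continuity, monotonicity via $L^2$-limits on the countable set $\dyadic$, and Doob's maximal inequality plus Borel--Cantelli for a.s.\ convergence in the geometric case---matches the paper's in spirit and is correct.
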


\begin{proof}  We first consider the limit for $t=1$.
  For $m \geq n$,
let
\[  \Delta(n,m) = \max\left\{\Theta_{r(j,n),m}-\Theta_{r(j-1,n),m}:
   j=1,\ldots,k_n\right\}. \]
Using \eqref{momentcond} and writing $k=k_n$,
 we can see that
\begin{eqnarray*}
\E\left[\Delta(n,m)^2\right] & \leq & \sum_{j=1}^k
  \E\left[(\Theta_{r(j,n),m} - \Theta_{r(j-1,n),m})^2\right]\\
       & \leq
  &         c \sum_{j=1}^k [r(j,n)- r(j-1,n)]^{\beta + 1} \,
  \\
 & \leq & c\, \delta_n^\beta \, \sum_{j=1}^k [r(j,n)- r(j-1,n)] \,
   \\
 & = & c \, \delta_n^\beta .
\end{eqnarray*}
If $m \geq n$, then \eqref{apple.11} shows
that
$Y_t:=
\Theta_{t,m} - \Theta_{t,n}, t \in \dyadic_n$
is  a martingale, and \eqref{momentcond} shows that it is
square-integrable.  Hence, with $k = k_n$,
\begin{eqnarray*}
\E\left[(\Theta_{1,m} - \Theta_{1,n})^2 \right]
  & = & \sum_{j=1}^k \E\left[(Y_{r(j,n)} - Y_{r(j-1,n)})^2
  \right]  \\
   & \leq & \E\left[(\Delta(n,n) + \Delta(n,m))
         \sum_{j=1}^{k}  \left|Y_{r(j,n)} - Y_{r(j-1,n)}
\right| \right]
\end{eqnarray*}
Note that
\begin{eqnarray*}
\lefteqn{ \sum_{j=1}^{k}  |Y_{r(j,n)} - Y_{r(j-1,n)}| }\\&
\leq & \sum_{j=1}^k \left([\Theta_{r(j,n),n} - \Theta_{r(j-1,n),n}]
   + [ \Theta_{r(j,n),m} - \Theta_{r(j-1,n),m}] \right)\\
 & = &\Theta_{1,n} + \Theta_{1,m}.
\end{eqnarray*}
Therefore,
\begin{eqnarray}
 \lefteqn{\E\left[(\Theta_{1,m} - \Theta_{1,n})^2 \right]}
\hspace{.5in}\nonumber\\
 & \leq & \E\left[(\Delta(n,n) + \Delta(n,m))
                   \,(\Theta_{1,n} + \Theta_{1,m})\right] \nonumber \\
  & \leq & \E[(\Delta(n,n) + \Delta(n,m))^2]^{1/2} \;
   \E[(\Theta_{1,n} + \Theta_{1,m})^2]^{1/2} \nonumber \\
  & \leq & c \,\delta_n^\beta \label{apple.59}
\end{eqnarray}

 This shows that $\{\Theta_{1,m}\}$
 is a  Cauchy sequence, and
completeness of $L^2$ implies  that there is a limit.
Similarly, for every $t \in \dyadic$, we can see that
there exists
 $\Theta_t$ such that
\[          \lim_{n \rightarrow \infty}
     \E\left[|\Theta_{t,n} - \Theta_t|^2\right]
  =0 ,\;\;\;\;  t \in \dyadic. \]
Moreover, we get the estimate
\[           \E[(\Theta_{t} - \Theta_s)^2]
 = \lim_{n \rightarrow \infty} \E[ (\Theta_{t,n} - \Theta_{s,n})^2]
     \leq c \, (t-s) ^{\beta +1}\]
for
\[
 0 \leq s \leq t  \leq 1, \;\;\; s,t \in \dyadic. \]
The $L^2$-maximal inequality implies then that
\[    \E\left[\sup_{s \leq r \leq t} (\Theta_{r} - \Theta_s)^2
 \right] \leq c \,  (t-s) ^{\beta +1}
 \;\;\;\; 0 \leq s \leq t  \leq 1, \;\;\; s,t \in \dyadic,\]
where the supremum is also restricted to $r \in \dyadic$.
Let
\[   M(j,n) = \sup\left\{(\Theta_t - \Theta_s)^2:
      (j-1)2^{-n} \leq s,t \leq j2^{-n} , s,t \in \dyadic
  \right\}, \]
\[  M_n = \max\{M(j,n):j=1,\ldots,n\}.\]
Since $\dyadic$ is dense, we can then conclude
\[  \E[M(j,n)] \leq c \, 2^{-n(\beta+1)} , \]
\[  \E[M_n] \leq \sum_{j=1}^{2^n} \E[M(j,n)] \leq c\, 2^{-n\beta}. \]
An application of the triangle inequality shows that
if
\[   Z_n = \sup\left\{(\Theta_{t} - \Theta_s)^2: 0 \leq s,t \leq 1,
s,t \in \dyadic, |s-t|\leq 2^{-n} \right\}, \]
then
\[  \E[Z_n] \leq  c\, 2^{-n\beta}. \]
 The Chebyshev inequality and
the Borel-Cantelli Lemma show that if  $u < \beta/2$, with
probability one
\[  \sup\left\{\frac{|\Theta_{t} - \Theta_s|}{(t-s)^u} :
  0 \leq s , t \leq 1 , s,t \in \dyadic \right\} < \infty . \]
In particular, we can choose a continuous version
of the process $\Theta_t$ whose
 paths are H\"older continuous of order $u$ for
every $u< \beta/2$.

If the sequence is geometric, then \eqref{apple.59} implies that
there exist $c,v$ such that
\[     \E\left[(\Theta_{1,n+1} - \Theta_{1,n})^2 \right]
  \leq c \, e^{-nv}, \]
which implies
\[  \Prob\{|\Theta_{1,n+1} - \Theta_{1,n}| \geq e^{-nv/4}\}
  \leq c \, e^{-nv/2} . \]
Hence by the Borel-Cantelli Lemma we can write
\[    \Theta_{1} =  \Theta_{1,1} + \sum_{n=1}^\infty
             [\Theta_{n+1,1} - \Theta_{n,1}], \]
where the sum converges absolutely with probability one.

\end{proof}

\section{Moment bounds}   \label{momentsec}

In this section we show that $\kappa$ is good for
$\kappa < 4$.  Much of what we do applies to other
values of $\kappa$, so for now we let $\kappa < 8$.
Let
\[   d\mu(z) = G(z) \, \phi(z) \, dA(z), \;\;\;\;
  d \mu_t(z) = G(z) \, \phi(z;t) \, dA(z).\]
We note the scaling rule
\[   d\mu_t(z) = t^{d/2} \, d\mu(z/\sqrt t).\]
>From \eqref{phiestimate} we can see that
\begin{equation} \label{phi2}
d\mu_{t^2}(x+iy)
 \leq c \, y^{d-2} \, [(x/y)^2 + 1]^{\frac 12-2a}
  \, e^{-\beta(x/t)^2} \, 1\{y \leq 2at\}\, dx \, dy.
\end{equation}
Note that this implies (with a different $c$)
\begin{equation}  \label{phi3} d\mu_{t^2}(z)
 \leq c \, [\sin \theta_z]^{\frac \kappa 8 + \frac
 8 \kappa - 2} \, |z|^{\frac  \kappa 8-1}
  \, e^{-\beta|z|^2/t^2} \, dA(z).
\end{equation}

We have shown that $\Theta_{t,n}(D)$ has the same
distribution as $\newtheta_{t,2^{n/2}}(D)$ where

 \[ \newtheta_{t,n}(D)
  = n^{-d} \sum_{j \leq tn^2}
  I_{j-1,nD}\;\; ,\]
and
\[  I_{s,D} =
    \int_\Half |\hat f_s'(w)|^d \, 1\{f_s(w)
  \in D \} \, d\mu(w) . \]
 In this section  we establish the following
theorems which are the main
estimate.

\begin{theorem}  \label{firsttheorem}
If $\kappa < 8$, there exists $c$ such that for
all $s$,
\begin{equation}  \label{firstmoment}
  \E[I_{s,\Half}] \leq c \, s^{d-2}.
\end{equation}
\end{theorem}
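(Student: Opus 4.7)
The strategy is to collapse the random integral $I_{s,\Half}$ to an explicit deterministic quantity by changing variables back to the original SLE coordinate, and then to apply the pointwise first-moment identity \eqref{aug13.1}. Since $\hat f_s\colon \Half \to H_s$ is a conformal bijection with inverse $z\mapsto Z_s(z)=g_s(z)-V_s$, the substitution $w = Z_s(z)$ gives $dA(w) = |g_s'(z)|^2\, dA(z)$ and $|\hat f_s'(w)|^d = |g_s'(z)|^{-d}$. Combined with the identity $M_s(z) = |g_s'(z)|^{2-d}\, G(Z_s(z))$ and the observation that $\{z\in H_s\} = \{T_z > s\}$, this yields
\[
 I_{s,\Half} \;=\; \int_{\Half} M_s(z)\, \phi\bigl(Z_s(z); 1\bigr)\, 1\{T_z > s\}\, dA(z).
\]

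Taking expectations under Fubini, the integrand at each fixed $z$ is precisely the left-hand side of \eqref{aug13.1} (with $t = s$ and with the ``interval-length'' parameter there specialized to $1$), which supplies
\[
 \E\bigl[M_s(z)\, \phi(Z_s(z); 1)\bigr] \;=\; G(z)\,\bigl[\phi(z; s+1) - \phi(z; s)\bigr].
\]
The SLE scaling $\phi(z; t) = \phi(z/\sqrt t)$ together with the homogeneity $G(rz) = r^{d-2} G(z)$ allow the substitution $w = z/\sqrt t$ to evaluate
\[
 \int_{\Half} G(z)\, \phi(z; t)\, dA(z) \;=\; t^{d/2}\, \|\mu\|, \qquad \|\mu\| := \int_{\Half} G(w)\phi(w)\, dA(w),
\]
and therefore
\[
 \E[I_{s,\Half}] \;=\; \|\mu\|\,\bigl[(s+1)^{d/2} - s^{d/2}\bigr].
\]
The asserted bound follows by elementary estimates of $(s+1)^{d/2} - s^{d/2}$: it is at most $2^{d/2}$ for $s \leq 1$, and at most $(d/2)\, s^{d/2-1}$ for $s \geq 1$ by the mean value theorem.

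The main obstacle is verifying $\|\mu\| < \infty$. This relies on \eqref{phiestimate}, which confines $\phi$ to the strip $\{y \leq 2a\}$ with Gaussian decay in $x$, together with the explicit form $G(z) = y^{d-2}\bigl[(x/y)^2 + 1\bigr]^{1/2 - 2a}$: since $1/2 - 2a < 0$ for $\kappa < 8$, the second factor decays as $|x/y|$ grows and tempers the $y^{d-2}$ behavior near the real axis, while the Gaussian cutoff controls large $|x|$. A direct calculation then confirms integrability, and this is where the full hypothesis $\kappa < 8$ enters.
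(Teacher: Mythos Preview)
Your argument is correct and in fact yields the exact identity
\[
  \E[I_{s,\Half}] \;=\; \|\mu\|\,\bigl[(s+1)^{d/2}-s^{d/2}\bigr],
\]
which gives $\E[I_{s,\Half}]\le c\,s^{(d-2)/2}$.  This is precisely the bound the paper's proof obtains as well (there written as $\E[I_{s^2}]\le c\,s^{d-2}$, with the time variable squared), so your exponent matches the paper's actual computation.

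The route, however, is genuinely different.  The paper passes to the reverse-time flow and invokes the pointwise derivative moment estimate
\[
  \E\bigl[|h_{s^2}'(z)|^d\bigr]\;\le\; c\,s^{d-2}\,|z|^{2-d}\,[\sin\theta_z]^{2-d-\hat u},
\]
whose proof (Lemma \ref{aug10.lemma1}, equation \eqref{aug3.4}) requires a reparametrization, a carefully chosen stopping time, and a Girsanov estimate; one then integrates this against $d\mu$ using \eqref{phi3} and the condition $\hat u<8/\kappa$.  You instead stay in the forward flow, undo the change of variables to recover $M_s(z)\,\phi(Z_s(z);1)$, and apply the exact expectation formula \eqref{aug13.1}; this is essentially the one-point specialization of the change-of-variables trick that the paper itself uses in Lemma \ref{aug11.lemma4} for the two-point correlation.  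Your approach is shorter and more self-contained for the first moment, since it bypasses the reverse-flow moment lemma entirely; the paper's route has the advantage that the reverse-flow moment estimate \eqref{aug3.4} is needed anyway as an ingredient in the second-moment bounds, so for the paper it is essentially free.  Your verification that $\|\mu\|<\infty$ for all $\kappa<8$ (which comes down to the integrability of $y^{d-3+4a}$ near $y=0$, equivalently $4a+\tfrac{1}{4a}>2$) is the one place the full hypothesis enters, as you note.
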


\begin{theorem} \label{mainestimate}
  If $\kappa <4$, then for every $m < \infty$
there exists $c = c_m$ such that if
$D \in \domain_m$ and  $1 \leq s,  t \leq n$, then
\begin{equation}  \label{aug10.11}
  \sum_{j=0}^{s^2-1}
  \E[I_{j + t^2,nD} \, I_{t^2, nD}]
            \leq c \, (s/t)^{\zeta}  \, s^{2(d-1)}, \;\;\;\; s\leq t,
\end{equation}
 where $\zeta = 2 - \frac {3 \kappa}4$.  In particular,
\begin{eqnarray*}
 \E \left[[\Theta_{1,n}(D) - \Theta_{\delta,n}(D)]^2
\right] & = &   2^{-nd}
   \sum_{(1-\delta)2^{n} \leq j,k \leq 2^{n}}
  \E[I_{j ,2^{n/2}D} \, I_{k, 2^{n/2} D}]\\
           &  \leq & c \, (1-\delta)^{d + \frac
  \zeta 2} = c \, (1-\delta)^{2- \frac \kappa 4}.
\end{eqnarray*}
\end{theorem}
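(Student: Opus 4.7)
The plan is to reduce Theorem \ref{mainestimate} to a two-point moment estimate for the local martingale $M_t$, and then perform an explicit integration.

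First, the change of variables $z = \hat f_s(w)$ in the definition of $I_{s,D}$ --- using $|\hat f_s'(w)| = |g_s'(z)|^{-1}$, $dA(w) = |g_s'(z)|^2\,dA(z)$, and $G(w)\,|g_s'(z)|^{2-d} = M_s(z)$ --- yields the representation
\[
I_{s,D} = \int_D M_s(z)\,\phi(Z_s(z))\,dA(z).
\]
The Markov property of $SLE$ at time $t^2$, combined with the conditional-expectation identity \eqref{aug13.1} applied to the independent $SLE$ run after time $t^2$, gives
\[
\E\bigl[M_{t^2+j}(z)\,\phi(Z_{t^2+j}(z))\mid \F_{t^2}\bigr] = M_{t^2}(z)\,\phi_j(Z_{t^2}(z)),
\]
where $\phi_j(w) = \phi(w;j+1)-\phi(w;j)$. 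Telescoping $\sum_{j=0}^{s^2-1}\phi_j(w) = \phi(w;s^2) = \phi(w/s)$ (by the scaling rule) and then multiplying by $I_{t^2,nD}$ and taking expectation collapses the sum of interest to a single double integral:
\[
\sum_{j=0}^{s^2-1}\E[I_{t^2+j,nD}\,I_{t^2,nD}] = \iint_{(nD)^2}\E\bigl[M_{t^2}(z)\,M_{t^2}(z')\,\phi(Z_{t^2}(z)/s)\,\phi(Z_{t^2}(z'))\bigr]\,dA(z)\,dA(z').
\]

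The heart of the proof is now to estimate the integrand. My plan is to use the Girsanov weight $M_{t^2}(z)M_{t^2}(z')/[G(z)G(z')]$: under the weighted measure, the driving Brownian motion picks up a drift describing an $SLE$ biased to visit both $z$ and $z'$, and the two $\phi$ factors transform into hitting-time probabilities for this ``two-pointed'' radial $SLE$. These can be controlled by Bessel-type estimates in the spirit of the lemma preceding \eqref{phiestimate}, together with the Gaussian-type upper bound on $G\phi(\cdot;u)$ from \eqref{phi3}, which restricts the effective domain of integration to $|Z_{t^2}(z)| \lesssim s$ and $|Z_{t^2}(z')| \lesssim 1$. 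A polar-coordinates integration, split according to which of $|z|$, $|z'|$, $|z-z'|$ dominates, should then produce the target exponents $(s/t)^\zeta\,s^{2(d-1)}$ --- the $s^{2(d-1)}$ coming from the volume factor and the ratio $(s/t)^\zeta$ reflecting the differential decay of the two $\phi$-cutoffs.

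The main obstacle is the two-point moment bound itself. Since $M_t(z)M_t(z')$ is not a martingale --- its cross variation $\int_0^t M_s(z)M_s(z')\cdot\tfrac{(1-4a)^2 X_s(z)X_s(z')}{|Z_s(z)|^2\,|Z_s(z')|^2}\,ds$ is nonzero --- one cannot compute $\E[M_t(z)M_t(z')]$ from an abstract martingale argument and must use genuine $SLE$ input. For $\kappa<4$ the curve is simple, keeping the two-pointed $SLE$ geometry tractable and keeping the singular kernel that arises in the polar integration integrable; this is the origin of the restriction. It also ensures that the Hölder exponent $d+\zeta/2 = 2-\kappa/4$ in the corollary exceeds $1$, which is the regularity the Doob--Meyer argument of Section~\ref{DMsec} requires.
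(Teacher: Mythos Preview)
Your reduction (steps 1--2) is correct and essentially reproduces the paper's Lemma~\ref{aug11.lemma4} in forward-time coordinates: the telescoping via \eqref{aug13.1} does collapse the time sum to a single two-point expectation involving $\phi(\cdot/s)$ and $\phi(\cdot)$. So far the two approaches are equivalent.

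The gap is in the two-point moment estimate. You correctly flag that $M_t(z)M_t(z')$ is not a martingale and that ``genuine $SLE$ input'' is needed, but from that point the proposal becomes a wish list: Girsanov weighting by the forward two-point product, unspecified ``Bessel-type estimates'' for a ``two-pointed radial $SLE$,'' and a polar integration. None of this explains how to obtain the specific singular kernel $|z-w|^{-\kappa/4}|z-\bar w|^{-\kappa/4}$, which is precisely what is needed to make the integration converge for $\kappa<4$ and produce the exponent $\zeta = 2-3\kappa/4$. Without a concrete bound of that shape, the argument does not close.

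The paper sidesteps the difficulty by switching to the \emph{reverse-time} Loewner flow. There the single-point process $N_t(z) = |h_t'(z)|^d\,Y_t(z)^{1-d}\,|Z_t(z)|$ is a genuine (not merely local) martingale, and the two-point combination
\[
N_t(z)\,N_t(w)\,|Z_t(z)-Z_t(w)|^{-\kappa/4}\,|Z_t(z)-Z_t(\bar w)|^{-\kappa/4}
\]
is \emph{also} an exact martingale (Lemma~\ref{jan17.lemma} with $m=2$, $r=1$). Proposition~\ref{prop22} is then a one-line consequence: $\E[N_{t}(z,w)]=N_0(z,w)$, and on the event $\{\hat f_t(z),\hat f_t(w)\in D\}$ the auxiliary factors in $N_t$ are bounded, giving
\[
F_{D}(z,w,t)\ \leq\ c\,y_z^{-\kappa/8}|z|\,y_w^{-\kappa/8}|w|\,|z-w|^{-\kappa/4}|z-\bar w|^{-\kappa/4}.
\]
This is the missing idea: the reverse flow converts the problematic forward-time local-martingale product into an honest martingale whose initial value \emph{is} the desired singular kernel. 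Once you have that kernel, the integration against $d\mu(w)\,d\mu_{s^2}(z)$ is elementary (if tedious) and yields \eqref{aug10.11}.
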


\begin{theorem} \label{mainestimate2}
  If $\kappa < \kappa_0$,  then for every $m < \infty$
there exists $c = c_m$ such that if
$D \in \domain_m$ and  $1 \leq s,  t \leq n$, then
\begin{equation}  \label{aug10.11.2}
  \sum_{j=0}^{s^2-1}
  \E[I_{j + t^2,nD} \, I_{t^2, nD}]
            \leq c \, (s/t)^{\zeta}  \, s^{2(d-1)}, \;\;\;\; s\leq t,
\end{equation}
 where $\zeta = \frac 4 \kappa - \frac{3 \kappa}
  {16} - 1$.  In particular,
\begin{eqnarray*}
 \E \left[[\Theta_{1,n}(D) - \Theta_{\delta,n}(D)]^2
\right] & = &   2^{-nd}
   \sum_{(1-\delta)2^{n} \leq j,k \leq 2^{n}}
  \E[I_{j ,2^{n/2}D} \, I_{k, 2^{n/2} D}]\\
           &  \leq & c \, (1-\delta)^{d + \frac
  \zeta 2} = c \, (1-\delta)^{\frac 12
 + \frac 2 \kappa + \frac \kappa
{32}}.
\end{eqnarray*}
\end{theorem}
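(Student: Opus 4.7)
The plan is to reduce the second moment estimate to a two-point analogue of the local martingale $M_t(z)$ and then to control the resulting two-point Green function.

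\emph{Step 1 (Markov decomposition).} First I would expand
$$\E[I_{j+t^2,nD}\, I_{t^2,nD}] = \iint \E\!\left[|\hat f_{j+t^2}'(w_1)|^d\, |\hat f_{t^2}'(w_2)|^d\, 1\{\hat f_{j+t^2}(w_1),\, \hat f_{t^2}(w_2)\in nD\}\right] d\mu(w_1)\, d\mu(w_2).$$
Writing the Loewner flow after time $t^2$ as an independent copy of the flow up to time $j$, one has $\hat f_{j+t^2} = \hat f_{t^2}\circ\eta_j$, where $\eta_j$ is independent of $\F_{t^2}$ and distributed as $\hat f_j$. The chain rule then gives $|\hat f_{j+t^2}'(w_1)|^d = |\hat f_{t^2}'(\eta_j(w_1))|^d\,|\eta_j'(w_1)|^d$. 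Conditioning on $\F_{t^2}$ and integrating out the independent $\eta_j$-dependent factor against $d\mu(w_1)$ reduces the problem to controlling a two-point expectation
$$\E\!\left[|\hat f_{t^2}'(z)|^d\, |\hat f_{t^2}'(w)|^d\, 1\{\cdot\}\right]$$
integrated against an effective measure whose density in $z$ is the $j$-step pushforward of $d\mu$.

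\emph{Step 2 (Two-point supermartingale).} The core of the argument is a two-point version of $M_t$. Applying Ito's formula to $M_s(z)\, M_s(w)$ with both $Z_s(z), Z_s(w)$ driven by the same Brownian motion $B_s$ yields a drift of the form
$$M_s(z)\, M_s(w)\, \frac{(1-4a)^2\, X_s(z)\, X_s(w)}{|Z_s(z)|^2\, |Z_s(w)|^2}\, ds,$$
so the product is not itself a local martingale. One therefore seeks a non-negative function $F$ of the pair $(Z_s(z), Z_s(w))$ for which $\tilde M_s(z,w) := M_s(z)\, M_s(w)\, F(Z_s(z), Z_s(w))$ is a supermartingale. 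Scale invariance and symmetry reduce the supermartingale condition to a system of ODEs for $F$ in conformally natural pair-variables; the relevant branch of solution exhibits a short-distance singularity of the form $F(z,w)\sim c\,|z-w|^{-\alpha}$ as $z,w$ coalesce, for an exponent $\alpha$ read off from the ODE coefficients. The supermartingale property together with the change of variables \eqref{mar1.3} applied to pairs then gives
$$\E\!\left[|\hat f_s'(z)|^d\, |\hat f_s'(w)|^d\, 1\{\cdot\}\right] \leq c\, \tilde G(z,w),\qquad \tilde G(z,w) \asymp G(z)\, G(w)\, F(z,w).$$

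\emph{Step 3 (Integrability and summation).} The singularity $|z-w|^{-\alpha}$ is integrable against two-dimensional area, and hence locally against $d\mu\otimes d\mu$, exactly when $\alpha<2$. Examination of the ODE shows that this is equivalent to the arithmetic condition $\kappa/16+16/\kappa>7/2$, i.e., the hypothesis $\kappa<\kappa_0=4(7-\sqrt{33})$ recorded in \eqref{kappa}. Under this hypothesis, integrating the two-point bound against $d\mu\otimes d\mu$, carrying the estimate back through the Step 1 Markov factorization, and combining with the one-point bound of Theorem \ref{firsttheorem}, produces an inequality of the form
$$\E[I_{j+t^2,nD}\, I_{t^2,nD}] \leq c\,(j+t^2)^{d-2}\, t^{d-2}\, h(j,t),$$
where $h(j,t)$ encodes the extra contribution from near-diagonal pairs of starting points. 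Summing over $0\leq j<s^2$ with $s\leq t$, so that $j+t^2\asymp t^2$, yields the announced bound $c(s/t)^{\zeta}\, s^{2(d-1)}$ with $\zeta = 4/\kappa - 3\kappa/16 - 1$. The second-moment bound on $\E[(\Theta_{1,n}(D)-\Theta_{\delta,n}(D))^2]$ then follows from the symmetric double sum displayed in the statement.

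\emph{Main obstacle.} The hard part is Step 2: producing and analyzing the two-point supermartingale. The ODE for $F$ has multiple linearly independent solutions, and singling out the correct branch (positive, with the right asymptotics both as $|z-w|\to 0$ and as the pair becomes widely separated, and for which the Ito drift is signed correctly) is delicate two-variable calculus. The threshold $\kappa_0$ is forced by integrability of the short-distance singularity---so extending the result to $\kappa\in[\kappa_0,8)$, which the authors conjecture, would require a genuinely different two-point estimate rather than a sharpening of the present one.
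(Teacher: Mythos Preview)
Your Step 1 is in the right spirit (and is handled in the paper via the reverse-time flow, culminating in Lemma \ref{aug11.lemma4}, which collapses the sum over $j$ into a single two-point integral against $d\mu_{s^2}\otimes d\mu$). But Step 2 has a genuine gap, and your account of where $\kappa_0$ comes from is incorrect.

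The two-point supermartingale strategy you sketch---finding $F$ so that $M_s(z)M_s(w)F(Z_s(z),Z_s(w))$ has the right sign of drift and then reading off a $|z-w|^{-\alpha}$ singularity---is essentially the method behind the \emph{weaker} Theorem \ref{mainestimate} ($\kappa<4$), where the paper uses the explicit reverse-time two-point martingale of Lemma \ref{jan17.lemma} (Proposition \ref{prop22}). The paper remarks immediately afterward that this bound is \emph{not sharp} when $z$ and $w$ are close in the hyperbolic metric, and that is precisely why it only reaches $\kappa<4$. So a single two-point (super)martingale bound of the form you propose does not, by itself, get you to $\kappa_0$.

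The actual mechanism for Theorem \ref{mainestimate2} is different. The paper first uses Cauchy--Schwarz to reduce the two-point estimate to the diagonal $2d$-moment $\E[|h_{t^2}'(z)|^{2d}\,\I_{t^2}(z;tD)]$, controlled via a one-point reverse-time martingale with the specific exponent $r=\tfrac{4}{\kappa}+\tfrac12$ (Lemma \ref{apr13.lemma1}). The improvement over the $\kappa<4$ argument then comes from Lemma \ref{aug10.lemma1}: one reparametrizes so that $Y_t(w)$ grows deterministically, introduces a carefully designed stopping time $\tau\le t^2$ with $|U_s|$ controlled on $[0,\tau]$, and analyzes the Girsanov-weighted diffusion for $K_s=X_{\sigma(s)}/Y_{\sigma(s)}$. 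The $\kappa_0$ threshold enters as the summability condition $4a+\tfrac{1}{16a}>\tfrac74$ (equivalently \eqref{kappa}) at the end of that lemma, and again as the angular-integrability condition on the $\sin\theta$ power in Lemma \ref{66} and Proposition \ref{prop.aug1}. It does \emph{not} come from a short-distance integrability constraint $\alpha<2$ on a $|z-w|^{-\alpha}$ kernel; indeed the two-point martingale in Lemma \ref{jan17.lemma} has exponent $\kappa/4$ on $|z-w|$, which is integrable for all $\kappa<8$. Your ``$\alpha<2\iff\kappa<\kappa_0$'' claim is unsubstantiated, and without the stopping-time/reparametrization machinery of Section \ref{hardsec} the argument cannot reach $\kappa_0$.
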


This section is devoted to proving
Theorems \ref{firsttheorem}---\ref{mainestimate2}
Note that
\[  I_{s +t,D} \, I_{t,D}
       =
 \int_\Half \int_\Half |\hat f_{s+t}'(z)|^d
  \,  |\hat f_t'(w)|^d \, 1\{\hat f_{t+s}(z),
 \hat f_t(w)  \in D
               \} \, d\mu(z) \, d\mu(w). \]
In particular,
\begin{equation}  \label{jul25.1}
 \E\left[ I_{s,D}\right]  =
    \int_\Half \E\left[|\hat f_s'(w)|^d ; \hat f_s(w)
  \in D       \right] \, d\mu(w) ,
\end{equation}
\[  \E\left[ I_{s +t,D} \, I_{t,D}\right]
      =  \hspace{2.5in}  \]
\begin{equation}  \label{jul25.2.1}
 \int_\Half \int_\Half \E\left[ |\hat f_{s+t}'(z)|^d
  \,  |\hat f_t'(w)|^d ; \hat f_{t+s}(z) ,
\hat f_t(w) \in D
               \right] \, d\mu(z) \, d\mu(w).
\end{equation}

\subsection{Reverse-time flow}  \label{reversesec}

We will use
  the reverse-time flow for the
Loewner equation to estimates moments
of $\hat f'$. In this subsection we define the reverse
flow and set up some notation that will be useful.
Suppose $s,t \geq 0$ are given.  The expectations we
need to estimate are of the form
\begin{equation}  \label{jul17.1}
\E \left[|\hat f_{t}'(z)|^d
 \right],
\end{equation}
\begin{equation}  \label{jul17.2}
 \E \left[|\hat f_{s+t}'(z)|^d\, |\hat f_{t}'(w)|^d;  \hat f_{s
 + t} (z) \in D, \,  \hat f_{t}(w)  \in D\right],
\end{equation}
We fix $s,t \geq 0$ and allow quantities in this subsection
to depend implicitly on $s,t$.

Let $\tilde U_{r} = V_{t+s - r} - V_{s + t}$.  Then $
\tilde B_{r} := -
\tilde U_{r}, 0 \leq r \leq s + t$ is a standard Brownian motion
starting at the origin.
 Let $U_{r} =  V_{t - r} - V_{t} =
 \tilde U_{s + r} -
\tilde U_{s} , 0 \leq r \leq t$. Then $ B_{r}
= -  U_{r}$ is also a standard Brownian motion and
$\{\tilde U_r: 0 \leq r \leq s\}$ is independent of
$\{ U_r: 0 \leq r \leq t\}$.

Let $\tilde h_{r}, 0 \leq r \leq s + t $ be the
 solution to
the reverse-time Loewner equation
\begin{equation}  \label{reverse}
\p_r \tilde h_r(z) = \frac{a}{\tilde U_r -
  \tilde h_r(z)}, \;\;\;\;
                 \tilde h_0(z) = z .
\end{equation}
Let $ h_r,  0 \leq r \leq t$ be the solution
to
\[\p_r  h_r(z) = \frac{a}{U_r -  h_r(z)}
   = \frac{a}{\tilde U_{s+r} - [h_r(z) + \tilde U_s]}, \;\;\;\;
                 h_0(z) = z . \]
Let $\tilde h = \tilde h_{s+t},
  h = h_{t}$.  Using
only the Loewner equation, we can see that
\[       \hat f_{s+t}(z) =  \tilde h_{s+t}(z) - \tilde U_{s+t}, \;\;\;\;
       \hat f_{t}(w) =  h_t(w) -  U_{t}
  ,\]
\begin{equation}  \label{grape.2}
    \tilde h
  _{s+t}(z)   = h_t( \tilde h_s(z) - \tilde U_s) + \tilde U_s.
\end{equation}
Therefore the expectations in \eqref{jul17.1} and \eqref{jul17.2}
equal
\begin{equation}  \label{jul17.3}
     \E\left[| h'(z)|^d
\right],
\end{equation}
\begin{equation}  \label{jul17.4}
\E\left[ |\tilde h'(z)|^d \, | h'(w)|^d; \tilde h(z) -
 \tilde U_{s + t} \in D,
   h(w) -  U_{t}  \in D \right],
\end{equation}
respectively.  Let
\[   \I_t(z;D) = 1\{h_t(z) - U_t \in D\}, \;\;\;\;
    \I_t(z,w;D) = \I_t(z;D) \, \I_t(w;D).\]
Using \eqref{grape.2},  we can   write \eqref{jul17.4} as
\begin{equation}  \label{jul17.4.alt}
 \E\left[ |\tilde h_s'(z)|^d \,
   | h'_t(h_s(z) - \tilde U_s)|^d \, |h'_t(w)|^d;
  \I_t(\tilde h_s(z) - \tilde U_s, w;D) \right].
\end{equation}
We will derive estimates for $h,\tilde h$.    Let $F_D(z,w;s+t,t)$
denote the expectation in \eqref{jul17.4} and let
\[
  F_D(z,w,t) = F_D (z,w;t,
t) = \E\left[ |
h'_t(z)|^d \, |h'_t(w)|^d\; \I_t(z,w;D) \right].\]
We note the scaling relation: if $r > 0$,
\[  F_{D}(z,w,s+t,t) =
   F_{rD}(rz,rw;r^2(s+t),r^2t).\]
Since $\tilde h_s$ and $h_t$ are independent, we can
see by
 conditioning on the $\sigma$-algebra generated
by $\{\tilde U_r: 0 \leq r \leq s\}$ we see that \eqref{jul17.4.alt}
yields
\begin{equation}  \label{imp1}
  F(z,w;s+t,t) =
   \E\left[|h_s'(z)|^d \,F_D(h_s(z) - U_s,w,t) \right]
\end{equation}
(since $\tilde h_s$ and $h_s$ have the same distribution, we
replaced $\tilde h_s,\tilde U_s$ with $h_s,U_s$).

We rewrite \eqref{jul25.1} and \eqref{jul25.2.1} as
\[   \E\left[I_{t,D}\right] = \int_\Half \E\left[|h_t'(w)|^d
\; \I_t(w;D)
\right] \, d\mu(w), \]
\begin{equation}  \label{jan17.8}
 \E\left[I_{s + t,D} \, I_{t,D}\right]
   = \int_\Half \int_\Half F_D(z,w,s + t,t)
  \, d\mu(w) \, d\mu(z).
\end{equation}

The expressions on the left-hand side
of  \eqref{aug10.11} and
\eqref{aug10.11.2} involve expectations at two
different times.  The next lemma shows that we can
write these sums in terms of ``two-point''
estimates at a single time.
Recall the definition of $\mu_{s}$ from
Section \ref{forwardsec}.

\begin{lemma}  \label{aug11.lemma4} For all $D \in
\domain$ and  $s \geq 0$,
\[    \int_\Half \int_\Half F_D(z,w,s + t,t)
  \, d\mu(w) \, d\mu(z)
=  \hspace{1in} \]
\[ \hspace{1in} \int_\Half \int_\Half
 F_D(z,w,t)
  \, d\mu(w) \, [d\mu_{s + 1} - d \mu_{s}](z).\]
In particular,
  if $s$ is an integer,
\[
 \sum_{j=0}^{s^2-1}  \E[I_{j+t,D} \, I_{t,D}]
=    \int_\Half \int_\Half
 F_{D}(z,w,t)
  \, d\mu(w) \, d\mu_{s^2} (z) .\]
\end{lemma}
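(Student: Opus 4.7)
The plan is to use \eqref{imp1} to express $F_D(z,w;s+t,t)$ as a reverse-flow expectation and then transfer the outer $d\mu(z)$-integral to the forward picture, where the explicit formula \eqref{aug13.1} applies directly. From \eqref{imp1} and Tonelli,
\[
\int_\Half \int_\Half F_D(z,w;s+t,t)\, d\mu(w)\, d\mu(z) = \int_\Half d\mu(w) \int_\Half \E\left[|h_s'(z)|^d\, F_D(h_s(z) - U_s, w, t)\right] d\mu(z).
\]
Fixing $w$ and writing $\Phi(\zeta) := F_D(\zeta, w, t) \geq 0$, it suffices to prove
\[
\int_\Half \E\left[|h_s'(z)|^d\, \Phi(h_s(z) - U_s)\right] d\mu(z) = \int_\Half \Phi(z')\, d[\mu_{s+1} - \mu_s](z').
\]

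Next I would invoke the standard forward/reverse-time duality: at any fixed time $s$, the random map $z \mapsto h_s(z) - U_s$ has the same law as $z \mapsto \hat f_s(z)$, because the reverse Loewner driver differs from the forward driver only by a time reversal that preserves the law of Brownian motion. Recalling $d\mu(z) = G(z)\,\phi(z)\, dA(z)$ and exchanging expectation with the area integral,
\[
\int_\Half \E\left[|h_s'(z)|^d\, \Phi(h_s(z) - U_s)\right] d\mu(z) = \E\left[\int_\Half |\hat f_s'(z)|^d\, G(z)\, \phi(z)\, \Phi(\hat f_s(z))\, dA(z)\right].
\]
Then I would change variables by $z = Z_s(w) = g_s(w) - V_s$, exactly as done in \eqref{mar1.3}, so that $dA(z) = |g_s'(w)|^2\, dA(w)$ and $|\hat f_s'(z)| = |g_s'(w)|^{-1}$; the integrand collapses to $M_s(w)\, \phi(Z_s(w);1)\, \Phi(w)\, dA(w)$. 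Applying \eqref{aug13.1} with parameters $(t,s) \mapsto (s,1)$ gives $\E[M_s(w)\,\phi(Z_s(w);1)] = G(w)[\phi(w;s+1) - \phi(w;s)]$, so the outer integral becomes exactly $\int_\Half \Phi(w)\, d[\mu_{s+1} - \mu_s](w)$, as required.

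For the displayed sum, I apply the identity just proved with $s$ replaced by each integer $j \in \{0, 1, \ldots, s^2 - 1\}$ and sum; the signed measures $\mu_{j+1} - \mu_j$ telescope, and since $\phi(\cdot;0) \equiv 0$ implies $\mu_0 \equiv 0$, the cumulative sum collapses to $\mu_{s^2}$, yielding the claimed formula for $\sum_j \E[I_{j+t,D}\, I_{t,D}]$.

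The main technical points to verify carefully are the equidistribution of $h_s - U_s$ and $\hat f_s$ (a standard fact following from the symmetry of Brownian motion under $r \mapsto s - r$) and the change of variables $z = Z_s(w)$, which parametrizes $\{z \in \Half : T_z > s\}$ bijectively by $w \in H_s$ with the complementary set Lebesgue-null. The Tonelli/Fubini interchanges are justified by nonnegativity of $F_D$ together with the finiteness of $\E[I_{s,D}]$ furnished by Theorem \ref{firsttheorem}.
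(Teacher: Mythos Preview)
Your proof is correct and follows essentially the same route as the paper: use \eqref{imp1}, pass from the reverse flow to a forward flow, change variables $z' = \hat f_s(z)$ so that the integrand becomes $M_s(z')\,\phi(Z_s(z');1)\,F_D(z',w,t)$, and then invoke \eqref{aug13.1}; the telescoping argument for the sum is identical. The only cosmetic difference is that the paper identifies $h_s - U_s$ deterministically with a forward flow driven by $\hat U_r = U_{s-r} - U_s$ and then uses that $\hat U$ is a Brownian motion when applying \eqref{aug13.1}, whereas you invoke the distributional equality $h_s - U_s \stackrel{d}{=} \hat f_s$ up front; also, be careful that in your change-of-variables step you overload the symbol $w$ for both the fixed second argument of $F_D$ and the new integration variable.
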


\begin{proof}

Using \eqref{imp1}, we write
\[    \int_\Half \int_\Half F_D(z,w,s + t,t)
  \, d\mu(w) \, d\mu(z)
  =
 \int_\Half \E [\Phi]
 \, d\mu(w),\]
where
\[  \Phi = \Phi_D(w,s,t) = \int_\Half  | h_s'(z)|^d \,
     F_D(h_s(z) - U_s,w,t) \,\phi(z) \, G(z)
 \, dA(z).\]
We will change   variables,
\[       z' =  h_{s}(z) - U_{s} = \hat f_{s}(z)  . \]
Here  $ h_{s}(z) = \hat f_{s}(z) + U_{s}  =
 g_{s}^{-1}(z+ U_{s})+ U_{s}$ for a conformal map $g_{s}$ with
the distribution of the forward-time flow with driving function
$\hat U_r = U_{s-r} - U_{s}$. Then
\[    z = \hat f_{s}^{-1}(z') =  g_{s}(z') - \hat U_{s}  =
   \hat Z_{s}(z'), \]
where $ \hat Z_s(\cdot) = g_s(\cdot) -\hat U_s$.
Then
\begin{eqnarray*}
  \Phi & = & \int_\Half  | g_{s}'(z')|^{2-d} \,
     F(z',w,t) \, G(\hat Z_{s}(z')) \, \phi(\hat Z_{s}(z'))  \, dA(z')\\
 & = & \int_\Half \hat M_{s}(z') \,   F(z',w,t) \,  \phi(\hat Z_{s}(z'))  \, dA(z'),
\end{eqnarray*}
 where $\hat M_{s}$ denotes the forward direction local martingale
as in Section \ref{forwardsec}.
Taking expectation using \eqref{aug13.1}, we get
\begin{eqnarray*}
   \E[\Phi] & = & \int_\Half F(z',w,t) \, G(z') \, [\phi(z';s+1)
  - \phi(z';s)] \, dA(z') \\
 & = & \int_\Half F(z',w,t) \,   d[\mu_{s + 1} -
  \mu_{s}](z').
\end{eqnarray*}
This gives the first assertion.  The second assertion follows
from \eqref{jan17.8}.
\end{proof}

\subsection{The reverse-time martingale}

In this section we will collect facts about
the reverse-time martingale; this is analyzed in
more detail in \cite{Lmulti}.
Suppose that $B_t$ is a standard Brownian motion
and $h_t(z)$ is the solution to the reverse-time
Loewner equation
\begin{equation}  \label{revloew}
       \p_t h_t(z) = \frac{a}{U_t - h_t(z)}, \;\;\;\;
   h_0(z) = z,
\end{equation}
where $U_t = - B_t$ and $a = 2/ \kappa$.
Here $z \in \C \setminus \{0\}$.  The solution exists
for all times $t$ if $z \not\in \R$ and
\[    \overline{h_t(z)} = h_t(\overline z).\]
For fixed $t$, $h_t$ is a conformal transformation
of $\Half$ onto a subdomain of $\Half$.
Let
\[   Z_t(z) = X_t(z) + i Y_t(z) =
           h_t(z) - U_t , \]
and note that
\[   dZ_t(z) = -\frac a {Z_t(z)} \, dt + dB_t, \]
\[   dX_t(z) = - \frac{X_t(z)}{|Z_t(z)|^2} \, dt
     + dB_t, \;\;\;\;\;  \p_t Y_t(z) = \frac{a}
    {|Z_t(z)|^2}.\]
We  use $d$ for stochastic differentials and $\p_t$
for actual derivatives.
Differentiation of \eqref{revloew} yields
\[   \p_t |h_t'(z)| = |h_t'(z)| \, \frac{a \,[X_t(z)^2
   - Y_t(z)^2]} {|Z_t(z)|^4} , \]
\[ \p_t \, \left[\frac{|h_t'(z)|}{
  Y_t(z)} \right] = - \left[\frac{|h_t'(z)|}{
  Y_t(z)} \right] \,\frac{2a\, Y_t(z)^2}{|Z_t(z)|^4} .\]
In particular,
\begin{equation}  \label{devbound}
 |h_t'(x+i)| \leq Y_t(x+i) \leq \sqrt{2at + 1} .
\end{equation}

Let
\[
      N_t(z) = |h_t'(z)|^{d} \, Y_t(z)^{1-d} \, |Z_t(z)|
   = |h_t'(z)|^{d} \, Y_t(z)^{-\frac \kappa 8} \, |Z_t(z)|.
\]
An It\^o's formula calculation shows that
 $N_t(z)$ is a martingale satisfying
\[
  dN_t(z) = \frac{ X_t(z)}{|Z_t(z)|^2}
  \, N_t(z) \, dB_t.
\]
More generally, if
    $r  > 0$ and
\[   \lambda = r \, \left[ 1 + \frac \kappa 4
  \right]  \, - \frac{\kappa r^2}{8} , \]
then
\begin{equation}  \label{martingale}
  N_t = N_{t,r}(z) = |h_t'(z)|^\lambda \, Y_t^{- \frac{\kappa r^2}{8}}
  \, |Z_t(z)|^{r} ,
\end{equation}
is a martingale satisfying
 \begin{equation}  \label{jan15.1.alt}
  dN_t(z) = \frac{r\, X_t(z)}{|Z_t(z)|^2}
  \, N_t(z) \, dB_t.
\end{equation}

Note that
\[    h_t(z) - h_t(w) = Z_t(z) - Z_t(w) , \]
\[ \p_t[Z_t(z) - Z_t(w)] = [Z_t(z) - Z_t(w)] \, \frac{a}{
                     Z_t(z) \, Z_t(w)}, \]
\begin{eqnarray*}
 \p_t |Z_t(z) - Z_t(w)| & = & |Z_t(z) - Z_t(w)|\,
  \Re\left[\frac{a}{
                     Z_t(z) \, Z_t(w)}\right]\\
  & = &  |Z_t(z) - Z_t(w)|\, \frac{a[X_t(z) \, X_t(w)
   - Y_t(z) \, Y_t(w)]}{|Z_t(z)|^2 \, |Z_t(w)|^2}.
\end{eqnarray*}
\[  \p_t \left[|Z_t(z) - Z_t(w)|\,
      |Z_t(z) - Z_t(\overline w)| \right] = \hspace{1.5in}\]
\[
 \hspace{1in}
\left[|Z_t(z) - Z_t(w)|\,
      |Z_t(z) - Z_t(\overline w)| \right] \, \frac{2a\, X_t(z)\,
   X_t(w) }{|Z_t(z)|^2 \, |Z_t(w)^2|}. \]
Combining this with \eqref{jan15.1.alt} and the
stochastic product rule yields the following.
We will only use this lemma with $m=2$.

\begin{lemma} \label{jan17.lemma}
 Suppose $r \in \R$, $z_1,\ldots,z_m \in \Half$,
and $N_t(z_j)$ denotes the martingale in \eqref{martingale}.
Let
 \[  N_t =
N_t(z_1,\ldots,z_m) =\hspace{1.3in}\]
\begin{equation}  \label{jan17.2}
\left[\prod_{j=1}^m N_t(z_j) \right]
   \; \left[\prod_{j \neq k} |Z_t(z_j) - Z_t(
  z_k )| |Z_t(z_j)
    - Z_t(\overline z_k)|\right]^{-\frac {r^2\kappa} 4}.
\end{equation}
Then $N_t $ is a martingale satisfying
\[   dN_t = r\, N_t \, \left[\sum_{j=1}^m
   \frac{X_j(z_j)}{|Z_j(z_j)|^2} \right] \,
   dB_t.\]
\end{lemma}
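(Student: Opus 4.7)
The proof is a direct It\^o calculation whose structural key point is that, although each $Z_t(z_j)$ has a Brownian component, the difference $Z_t(z_j)-Z_t(z_k)$ satisfies the deterministic ODE
$\partial_t[Z_t(z_j)-Z_t(z_k)] = -a[1/Z_t(z_j)-1/Z_t(z_k)]$,
so the entire interaction factor
$P_t := \prod_{j\neq k}|Z_t(z_j)-Z_t(z_k)|\,|Z_t(z_j)-Z_t(\overline{z_k})|$
is of finite variation. My plan is therefore to write $N_t=A_tB_t$ with $A_t=\prod_j N_t(z_j)$ and $B_t=P_t^{-r^2\kappa/4}$, so that $\langle A,B\rangle_t\equiv 0$ and $dN_t = B_t\,dA_t+A_t\,dB_t$, then compute each piece and show that the drifts cancel.

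For $A_t$, I would pass to logarithms: from \eqref{jan15.1.alt} we have $d\log N_t(z_j)=(rX_t(z_j)/|Z_t(z_j)|^2)\,dB_t-\tfrac12(rX_t(z_j)/|Z_t(z_j)|^2)^2\,dt$, so summing in $j$ and re-exponentiating gives
\[
\frac{dA_t}{A_t}=\sum_{j=1}^m\frac{rX_t(z_j)}{|Z_t(z_j)|^2}\,dB_t+\frac{r^2}{2}\sum_{j\neq k}\frac{X_t(z_j)X_t(z_k)}{|Z_t(z_j)|^2|Z_t(z_k)|^2}\,dt,
\]
where the cross term comes from the quadratic variation of the martingale part $\langle\log A\rangle_t$. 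For $B_t$ I would use the identities already established in the previous subsection, namely $\partial_t\log|Z_t(z_j)-Z_t(z_k)|=a(X_jX_k-Y_jY_k)/(|Z_j|^2|Z_k|^2)$ together with the analogous formula with $z_k$ replaced by $\overline{z_k}$, in which the sign of the $Y_jY_k$-term flips. The crucial feature is that the $Y_jY_k$ contributions cancel when one sums the two, leaving
\[
\partial_t\log\bigl[|Z_t(z_j)-Z_t(z_k)|\,|Z_t(z_j)-Z_t(\overline{z_k})|\bigr]=\frac{2aX_t(z_j)X_t(z_k)}{|Z_t(z_j)|^2|Z_t(z_k)|^2}.
\]
Summing over ordered pairs and multiplying by the exponent, I obtain $\partial_t\log B_t$ as an explicit sum of the same $X_jX_k$-type terms that appear in the drift of $A_t$.

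The remaining step is algebraic: insert the two pieces into $dN_t=B_t\,dA_t+A_t\,dB_t$, collect finite-variation contributions, and check that they vanish identically. This is where the precise exponent $-r^2\kappa/4$ enters — together with the defining identity $\kappa a=2$, it is the unique choice that turns the drift from $A_t$ into exactly the negative of $A_tB_t\,\partial_t\log B_t$. Once this cancellation is verified, what remains is
$dN_t = N_t\,\sum_j rX_t(z_j)/|Z_t(z_j)|^2\,dB_t$,
which is exactly the claim of the lemma and establishes that $N_t$ is a local martingale. The main (and essentially only) obstacle is the bookkeeping in this cancellation: one must track the factor of $2$ coming from summing over ordered versus unordered pairs and confirm that the $Y_jY_k$-terms from the two types of interaction factors cancel rather than reinforcing — which is the conceptual reason the product must involve both $|Z_t(z_j)-Z_t(z_k)|$ and $|Z_t(z_j)-Z_t(\overline{z_k})|$ with equal exponents. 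No genuinely hard analytic input is required beyond the formulas already collected in this subsection.
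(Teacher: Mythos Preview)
Your proposal is correct and is exactly the approach the paper takes: the paper's entire proof is the one-line remark that combining the displayed formula $\partial_t[|Z_t(z)-Z_t(w)||Z_t(z)-Z_t(\bar w)|]=[\cdots]\cdot 2aX_t(z)X_t(w)/(|Z_t(z)|^2|Z_t(w)|^2)$ with \eqref{jan15.1.alt} and the stochastic product rule gives the result, and you have simply spelled out that computation. Your observation that the interaction factor is of finite variation (so there is no cross-variation term with $A_t$) and that the $Y_jY_k$ contributions cancel between the $z_k$ and $\bar z_k$ factors is precisely the content of the displayed identities preceding the lemma; the factor-of-two bookkeeping you flag is the only thing to watch, and it is consistent with how the lemma is used in Proposition~\ref{prop22}.
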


\subsection{First moment}  \label{firstsec}

The proof of Theorem \ref{firsttheorem} relies
on the following estimate that can be found in
\cite[Theorem 9.1]{Lmulti}.  Since it will not
require much extra work here, we will also give
a proof in this paper.  Unlike
the second moment estimates, there is no need to restrict
this to $D \in \domain$.

\begin{lemma}   Suppose $\kappa < 8$ and
$\hat u > 2- \frac \kappa 8$.
Then there exists
  $c$ such that for all $x,y$ and $s \geq y$,
\begin{equation}  \label{firstmom}
  \E\left[|h'_{s^2}(z)|^d\right]
  \leq c \, s^{d-2} \, |z|^{2-d}  \, [\sin \theta_z]^{2-d-\hat u}  .
\end{equation}
\end{lemma}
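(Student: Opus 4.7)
The plan is to use the reverse-time martingale $N_t = |h_t'(z)|^d \, Y_t^{-(d-1)} \, |Z_t|$ (the case $r = 1$ of \eqref{martingale}, for which $\lambda = d$) together with a Girsanov change of measure. By the scaling identity for the reverse-time Loewner flow ($\tilde h_r(\tilde z) = h_{y^2 r}(y \tilde z)/y$ with rescaled driving function $\tilde U_r = U_{y^2 r}/y$ again a Brownian motion), it suffices to treat $y = 1$, in which case the bound reduces to
\[
\E[|h_{s^2}'(x+i)|^d] \leq c\, s^{d-2} (x^2+1)^{\hat u/2}, \qquad s \geq 1.
\]
Writing $|h_t'(z)|^d = N_t \cdot Y_t^{d-1}/|Z_t|$ and noting $N_0 = \sqrt{x^2+1}$, we obtain
\[
\E[|h_{s^2}'(x+i)|^d] = \sqrt{x^2+1}\cdot \E^Q\!\left[\frac{Y_{s^2}^{d-1}}{|Z_{s^2}|}\right],
\]
where $dQ/dP|_{\F_t} = N_t/N_0$. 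Applying Girsanov to \eqref{jan15.1.alt} cancels the drift of $X_t$, so under $Q$ the process $X_t$ is a standard Brownian motion started at $x$, while $Y_t$ still satisfies the pathwise ODE $\p_t Y_t = a Y_t/|Z_t|^2$ with $Y_0 = 1$.

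Next, I apply the time change $\sigma(\tau)$ defined by $Y_{\sigma(\tau)} = e^{a\tau}$, so that $\dot\sigma(\tau) = e^{2a\tau}(K_\tau^2 + 1)$ where $K_\tau = X_{\sigma(\tau)}/Y_{\sigma(\tau)}$. Under the substitution $K_\tau = \sinh J_\tau$, an It\^o computation shows that under $Q$,
\[
dJ_\tau = -\bigl(a + \tfrac12\bigr)\tanh J_\tau \, d\tau + d\tilde W_\tau, \qquad \sinh J_0 = x,
\]
which for $a > 0$ is ergodic with stationary density proportional to $(\cosh J)^{-(2a+1)}$. Setting $T = \sigma^{-1}(s^2)$, the pathwise constraint becomes $\int_0^T e^{2a\tau} \cosh^2 J_\tau \, d\tau = s^2$, and
\[
\frac{Y_{s^2}^{d-1}}{|Z_{s^2}|} = \frac{e^{a(d-2)T}}{\cosh J_T}.
\]
The remaining task is to establish the bound $\E^Q\bigl[e^{a(d-2)T}/\cosh J_T\bigr] \leq c\, s^{d-2} (x^2+1)^{(\hat u - 1)/2}$.

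In the regime where $|x|$ is bounded, $J_\tau$ relaxes to stationarity in $O(1)$ time and the constraint integral is dominated by $e^{2aT}/(2a)$, so $T \approx \frac{1}{2a} \log s^2$, giving $e^{a(d-2)T} \asymp s^{d-2}$ while $\E^Q[1/\cosh J_T]$ stays of constant order by ergodicity. When $|x|$ is large, $J_0 = \sinh^{-1} x$ is large and the drift carries $J_\tau$ back toward $0$ at rate $a + \tfrac12$; the integral is then dominated by an early phase where $\cosh J_\tau \sim |x|$, so $T$ may be small, but $1/\cosh J_T$ is correspondingly small, and the two effects must be balanced. The main obstacle is carrying out this compensation quantitatively via explicit estimates on the one-dimensional diffusion $J_\tau$; the threshold $\hat u > 2 - \kappa/8$ arises precisely as the integrability condition under which the contributions combine to give the polynomial factor $(x^2 + 1)^{\hat u/2}$ in the final bound.
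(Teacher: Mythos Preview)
Your overall strategy---use the $r=1$ reverse-time martingale $N_t = |h_t'(z)|^d Y_t^{1-d}|Z_t|$, pass to the weighted measure $Q$, and reparametrize so that $Y$ grows exponentially---is exactly the paper's route (see Lemma~\ref{aug10.lemma1}). However, there is a concrete error and a genuine gap.

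\textbf{The error.} Girsanov with $dN_t/N_t = \frac{X_t}{|Z_t|^2}\,dB_t$ does \emph{not} cancel the drift of $X_t$. Under $P$ one has $dX_t = -\frac{aX_t}{|Z_t|^2}\,dt + dB_t$, so under $Q$ the drift becomes $\frac{(1-a)X_t}{|Z_t|^2}$, which vanishes only for $\kappa=2$. Consequently your SDE for $J_\tau$ is wrong: the correct equation (as in \eqref{aug10.5}) is $dJ_\tau = (\tfrac12 - 2a)\tanh J_\tau\,d\tau + dW_\tau$, with mean-reversion rate $2a-\tfrac12$ rather than your $a+\tfrac12$. Both are positive for $\kappa<8$, so the qualitative picture survives, but the exponents in any tail estimate will differ.

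\textbf{The gap.} You stop short of the actual estimate, and the difficulty you flag is real: the random time $T$ defined implicitly by $\int_0^T e^{2a\tau}\cosh^2 J_\tau\,d\tau = s^2$ is awkward to control directly, especially for large $|x|$. The paper sidesteps this by introducing a stopping time $\rho$ (the first $r$ with $\hat X_r^2+\hat Y_r^2 \geq e^{2al}/(l-r+1)^4$, where $t=e^{al}$), which is guaranteed to satisfy $\sigma(\rho)\leq t^2$. One then decomposes according to the events $A_j=\{l-j<\rho\leq l-j+1\}$, on which $Y_\tau$ and $R_\tau$ are pinned up to constants. Optional stopping gives $\E[N_{t^2}1_{A_j}] = N_0\,\Prob^*(A_j)$, and $\Prob^*(A_j)$ is bounded by a hitting-probability estimate for the one-dimensional diffusion $J$, yielding $\Prob^*(A_j)\leq c\,j^\beta e^{(4a-1)(m-j)a}$ when $x=e^{am}$. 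Summing over $j$ and using $|h_{t^2}'|^d \leq N_{t^2}Y_{t^2}^{d-2}\leq N_{t^2}Y_\tau^{d-2}$ produces the bound, with the condition $\hat u > 2-\kappa/8$ entering through the logarithmic correction $m^\beta$. Without this stopping-time decomposition, your direct approach to $\E^Q[e^{a(d-2)T}/\cosh J_T]$ would require a separate argument to handle the correlation between $T$ and $J_T$ when $|x|$ is large.
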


\begin{proof}  See Lemma \ref{aug10.lemma1}.
Note that scaling implies
that it suffices to prove the result for $y_z=1$.\end{proof}

\begin{proof}[Proof of Theorem \ref{firsttheorem} given
\eqref{firstmom}]
If $\kappa < 8$, we can find
 $\hat u$ satisfying
\begin{equation} \label{uhat}
2 - \frac \kappa 8  < \hat u <  \frac 8 \kappa.
\end{equation}
Then,
\begin{eqnarray*}
 \E[I_{s^2}] &    \leq & \int_\Half \E\left[
  |h_{s^2}'(z)|^d \right] \, d\mu(z)\\
 & \leq & c \,  s^{d-2} \int_\Half
 |z|^{2-d}  \, [\sin \theta_z]^{2-d-\hat u}  \,
  |z|^{d-2} \, [\sin \theta_z]^{\frac \kappa 8 +
 \frac 8 \kappa -2} \, e^{-\beta |z|^2} \, dA(z)\\
& \leq & c s^{d-2}.
\end{eqnarray*}
The last inequality uses $\hat u < 8/\kappa$.
\end{proof}

\subsection{Proof of Theorem \ref{mainestimate}}

The martingale in \eqref{jan17.2}
 yields a simple
two-point estimate for the derivatives.
  This bound is not always sharp,  but
it suffices for proving
Theorem \ref{mainestimate}.

\begin{proposition}  \label{prop22}
 For every $m <\infty$, there exists
$c = c_m$ such that if
$ D \in \domain_m$,  $z,w \in \Half$, $s,t >0$,
\begin{equation}  \label{estimate}
 F_{sD}(z,w;ts^2) \leq c \, s^{ \frac {3\kappa}4-2}
  \, y_z^{-\frac \kappa 8} \,|z| \,  y_w^{-\frac \kappa
8} \, |w| \, |z-w|^{-\frac \kappa 4} \,
  |z-\overline w|^{-\frac \kappa 4}.
\end{equation}
\end{proposition}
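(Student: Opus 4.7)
The plan is to apply the two-point reverse-time local martingale from Lemma \ref{jan17.lemma} with $m=2$ and the parameter choice $r=1$. With $r=1$ one checks $\lambda = 1 + \kappa/4 - \kappa/8 = 1 + \kappa/8 = d$, so the positive local martingale factors as
\[
N_\tau(z,w) \;=\; |h_\tau'(z)|^d\, |h_\tau'(w)|^d \cdot C_\tau(z,w),
\]
where
\[
C_\tau(z,w) \;=\; Y_\tau(z)^{-\kappa/8}\, |Z_\tau(z)|\cdot Y_\tau(w)^{-\kappa/8}\, |Z_\tau(w)| \cdot \bigl[\,|Z_\tau(z)-Z_\tau(w)|\,|Z_\tau(z)-\overline{Z_\tau(w)}|\,\bigr]^{-\kappa/4}.
\]
The initial value
\[
N_0(z,w) \;=\; y_z^{-\kappa/8}\,|z|\, y_w^{-\kappa/8}\,|w|\, |z-w|^{-\kappa/4}\, |z-\overline{w}|^{-\kappa/4}
\]
already matches the right-hand side of the target estimate up to the $s$-power, which motivates the choice $r=1$.

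Next I would write
\[
F_{sD}(z,w;ts^2) \;=\; \E\!\left[\, N_{ts^2}(z,w) \cdot C_{ts^2}(z,w)^{-1} \cdot \I_{ts^2}(z,w;sD)\,\right]
\]
and bound $C_{ts^2}^{-1}$ deterministically on the event $\I_{ts^2}(z,w;sD)$. Since $D \in \domain_m$ lies in $\{|x|<m,\, 1/m<y<m\}$, on this event both $Z_{ts^2}(z)$ and $Z_{ts^2}(w)$ lie in $sD$, which forces $s/m \le Y_{ts^2} \le sm$, $s/m \le |Z_{ts^2}| \le \sqrt 2\,sm$, and $|Z_{ts^2}(z)-Z_{ts^2}(w)|,\,|Z_{ts^2}(z)-\overline{Z_{ts^2}(w)}| \le 2\sqrt 2\,sm$. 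Substituting,
\[
C_{ts^2}(z,w)^{-1} \;\le\; C_m\,(sm)^{\kappa/8}(m/s)(sm)^{\kappa/8}(m/s)(sm)^{\kappa/2} \;=\; C_m\, s^{\,3\kappa/4-2}.
\]

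The final ingredient is that $N_\tau$ is a non-negative local martingale, hence a supermartingale, so $\E[N_{ts^2}] \le N_0$; this is obtained by localizing with $\tau_n = \inf\{\tau : N_\tau \ge n\}$ and applying Fatou's lemma. Combining,
\[
F_{sD}(z,w;ts^2) \;\le\; C_m\, s^{\,3\kappa/4-2}\, \E[N_{ts^2}] \;\le\; C_m\, s^{\,3\kappa/4-2}\, N_0,
\]
which is the claimed inequality. I do not expect any serious obstacle: the proposition is essentially a one-line consequence of the two-point martingale combined with the elementary geometric bounds coming from $\I_\tau(\cdot;sD)$. The only point requiring care is that the indicator is used in a critical way --- without it the $+\kappa/4$ exponents on the $Z_\tau$-distances would be uncontrolled --- and this is precisely why the estimate is not sharp. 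The paper explicitly flags this crudeness, which is what later restricts the resulting second-moment bound to $\kappa < 4$ (and motivates the more refined estimate underlying Theorem \ref{mainestimate2}).
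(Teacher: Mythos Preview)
Your proof is correct and follows essentially the same route as the paper: both apply the two-point martingale of Lemma \ref{jan17.lemma} with $r=1$, bound the non-derivative factors deterministically on the event $\I_{ts^2}(z,w;sD)$, and then use $\E[N_{ts^2}]\le N_0$. The only cosmetic differences are that the paper first scales to $s=1$ (so the $s^{3\kappa/4-2}$ factor never appears explicitly) and invokes the martingale property $\E[N_t]=N_0$ directly, whereas you keep $s$ general and appeal to the supermartingale inequality; both are fine.
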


\begin{proof}  By scaling we may assume $s=1$.
All constants in this proof
depend on $m$ but not otherwise on $D$.  Let
$N_t$ be the martingale from Lemma \ref{jan17.lemma}
with $r=1$.
 Then
\[   \E[N_t] =
 N_0 = y_z^{-\frac  \kappa 8} \,|z|\,
   y_w^{-\frac  \kappa 8} \, |w| \,  |z-w|^{-\frac \kappa 4}
  \, |z - \overline w|^{-\frac \kappa 4}. \]
If $\I_t(z,w;D) = 1,$   we have
\[   Y_t(z)^{-\frac \kappa 8} \, |Z_t(z)| \,
   Y_t(w)^{-\frac \kappa 8} \, |Z_t(w)| \geq c_1 > 0, \]
\[    |h_t(z) - h_t(w)| \,
 |h_t(z) - h_t(\overline w)| \leq c_2 < \infty.\]
Therefore,
\begin{equation}  \label{lose}
 |h_t'(z)|^d \, |h_t'(w)|^d
  \, \I_t(z,w;D)
    \leq c_3 \, N_t,
\end{equation}
and
\[\E \left[ |h_t'(z)|^d \, |h_t'(w)|^d
  \, \I_t(z,w;D)
\right] \leq c \, \E[N_t]
  . \]
\end{proof}

\begin{remark}  The estimate \eqref{estimate}
is not sharp if $z$ and $w$ are
close in the hyperbolic metric.  For example, suppose
 that
$z=2w =   \epsilon i $.
Then using distortion estimates we see that
\[    |h_t(z) - h_t(w)| \asymp |h_t'(z)| \, |z-w| \asymp
   \epsilon \, |h_t'(z)| . \]
On the event $\I_t(z,w;D)= 1$,
\[   N_t \asymp \epsilon^{-\frac \kappa 4}
 \, |h_t'(z)|^{2d - \frac \kappa 4}. \]
Typically, $|h_t'(z)| \ll \epsilon^{-1}$, so the estimate
\eqref{lose} in the proof is not sharp.
\end{remark}

\begin{proposition}
If $\kappa < 4$, then for every positive integer $m$
there exists  $c = c_m$ such that if
  $D \in \domain_m$
and $s \geq 1$,
$t
\geq 1$, $r
> 0$
\begin{equation}  \label{jan17.6}
    \int_\Half \int_\Half
 F_{tD}( z, w,rt^2)
  \, d\mu( w) \, d\mu_{s^2} ( z)  \leq c \,
(s/t)^{2-\frac {3\kappa} 4}
 \, s^{\frac \kappa 4} .
\end{equation}
\end{proposition}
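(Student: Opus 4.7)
The strategy is to invoke the pointwise two-point bound of Proposition~\ref{prop22}---which is independent of the time parameter---to reduce \eqref{jan17.6} to a purely analytic estimate on a concrete double integral. Applying \eqref{estimate} with scale $t$ in place of $s$ gives
\[
F_{tD}(z,w,rt^2) \leq c_m\, t^{3\kappa/4 - 2}\, y_z^{-\kappa/8}|z|\, y_w^{-\kappa/8}|w|\, |z-w|^{-\kappa/4}|z-\bar w|^{-\kappa/4}.
\]
Since $(s/t)^{2-3\kappa/4}\, s^{\kappa/4} = t^{3\kappa/4-2}\, s^{2-\kappa/2}$, it suffices to prove
\[
\iint y_z^{-\kappa/8}|z|\, y_w^{-\kappa/8}|w|\, |z-w|^{-\kappa/4}|z-\bar w|^{-\kappa/4}\, d\mu(w)\, d\mu_{s^2}(z) \leq c\, s^{2-\kappa/2}.
\]

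The key algebraic simplification is the identity $y_v^{-\kappa/8}|v|\, G(v) = (\sin\theta_v)^{8/\kappa - 2}$, which follows immediately from \eqref{green} and $y_v = |v|\sin\theta_v$. This collapses the weight on each factor of the integrand to a pure angular power. Combined with the scaling $\phi(z;s^2) = \phi(z/s)$ and the Gaussian-type tail in \eqref{phiestimate}, the density $\phi(w)$ is effectively confined to a fixed compact set $\{y_w \leq 2a,\, |x_w| \leq M\}$ and $\phi(z/s)$ to its dilate by $s$. I split the $z$-integration into the far regime $|z| \geq 2M$ and the near regime $|z| < 2M$.

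In the far regime, $|z-w|\asymp|z-\bar w|\asymp|z|$ uniformly over $w$ in the essential support of $\phi$ (the tails $|w|>|z|/2$ being absorbed by the Gaussian in $\phi$), so the inner $w$-integral is bounded by a finite constant involving $\int(\sin\theta_w)^{8/\kappa-2}\phi(w)\,dA(w)$, which is finite since $8/\kappa>1$ for $\kappa<8$. The outer integral reduces to
\[
\int_{|z| \geq 2M} (\sin\theta_z)^{8/\kappa-2}\, |z|^{-\kappa/2}\, \phi(z/s)\, dA(z),
\]
and the substitution $z = s\tilde z$ extracts exactly the factor $s^{2-\kappa/2}$, leaving an $s$-independent integral that is finite precisely when $|\tilde z|^{-\kappa/2}$ is locally integrable at the origin, i.e.\ when $\kappa < 4$. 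In the near regime both variables range over a fixed bounded set and $|z-w|^{-\kappa/4}|z-\bar w|^{-\kappa/4}$ is locally integrable for all $\kappa < 8$, so the contribution is a constant, which is dominated by $s^{2-\kappa/2}$ because $s \geq 1$ and $2 - \kappa/2 > 0$.

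The main obstacle is not the convergence of any single integral but the sharpness of the $\kappa < 4$ threshold: it appears in the far-regime substitution as exactly the condition for local integrability of $|\tilde z|^{-\kappa/2}$ at the origin. This reflects a genuine deficiency of the two-point bound \eqref{estimate}, noted in the remark following Proposition~\ref{prop22}: the bound is lossy when $z$ and $w$ are hyperbolically close. Replacing it by a sharper estimate exploiting that proximity is what enables the improved range $\kappa < \kappa_0$ in Theorem~\ref{mainestimate2}.
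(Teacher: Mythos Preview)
Your approach is correct and follows the same overall strategy as the paper: invoke Proposition~\ref{prop22} to reduce \eqref{jan17.6} to a concrete double integral, then extract the $s^{2-\kappa/2}$ scaling. The difference lies in how the integral is organized. The paper defines $\Phi(z)=\int y_w^{-\kappa/8}|w|\,|z-w|^{-\kappa/4}|z-\bar w|^{-\kappa/4}\,d\mu(w)$ and proves the \emph{uniform} pointwise bound $\Phi(z)\le c\,|z|^{-\kappa/2}$ via a three-region split in $|z-w|$ (namely $|z-w|\le y_z/2$, $y_z/2<|z-w|<|z|/2$, $|z-w|\ge|z|/2$), after which the outer $z$-integral is immediate. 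You instead split the outer variable into near and far regimes in $|z|$, using the tidy identity $y_v^{-\kappa/8}|v|\,G(v)=(\sin\theta_v)^{8/\kappa-2}$ to collapse the weights. Your route is slightly quicker in bookkeeping but glosses over two points that do require care: in the far regime, the tail $|w|>|z|/2$ is not literally in a compact set and needs the Gaussian in $\phi(w)$ together with the elementary inequality $|z-\bar w|\ge|z-w|$ to be absorbed; and in the near regime, the double integral of $|z-w|^{-\kappa/4}|z-\bar w|^{-\kappa/4}$ is finite because $|z-\bar w|\ge|z-w|$ reduces it to $|z-w|^{-\kappa/2}$, which is locally integrable in two dimensions precisely for $\kappa<4$ (not $\kappa<8$ as you state). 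With those details supplied, both arguments yield the same bound with the same threshold.
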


 \begin{proof}   As in the previous proof, constants
in this proof may depend on $m$ but not otherwise
on $D$.
  By \eqref{estimate}
the left-hand side of \eqref{jan17.6}
 is bounded above by a
constant times
\[    \, t^{ \frac {3\kappa}4-2}\int_\Half \int_\Half
  \, y_z^{-\frac \kappa 8} \,|z| \,  y_w^{-\frac \kappa
8} \, |w| \, |z-w|^{-\frac \kappa 4} \,
  |z-\overline w|^{-\frac \kappa 4}\, d\mu(w) \,
  d\mu_{s^2}(z) . \]  Hence it suffices to show that
there exists $c$ such that for all $s$,
\begin{equation}  \label{prop23}
\int_\Half \int_\Half
  \, y_z^{-\frac \kappa 8} \,|z| \,  y_w^{-\frac \kappa
8} \, |w| \, |z-w|^{-\frac \kappa 4} \,
  |z-\overline w|^{-\frac \kappa 4}\, d\mu(w) \,
  d\mu_{s^2}(z) < c \, s^{2-\frac \kappa 2}.
\end{equation}
Recall from \eqref{phi3} that
\begin{equation}  \label{jan17.4}
  d\mu_{s^2}(z) \leq c \, e^{-\beta (|z|/s)^2} \,
         \,
  \,  y_z^{\frac \kappa 8
 + \frac 8 \kappa -2} \, |z|^{1 - \frac 8 \kappa }\;dA(z).
\end{equation}
We write the integral in \eqref{prop23}
as
\[   \int_\Half \Phi(z) \,  y_z^{-\frac \kappa 8} \,|z|
\, d\mu_{s^2}(z) , \]
where
\[ \Phi(z)  = \int_\Half
 y_w^{-\frac \kappa
8} \, |w| \, |z-w|^{-\frac \kappa 4} \,
  |z-\overline w|^{-\frac \kappa 4}\, d\mu(w) .\]
We will show that
\begin{equation} \label{jan15.13}
         \Phi(z) \leq c
  \,  |z|^{ -\frac \kappa 2}
\end{equation}
Using this and \eqref{jan17.4},
the integral in \eqref{prop23} is bounded
above by a constant times
\begin{eqnarray*}
   \int_{\Half}  |z|^{2-  \frac \kappa 2
 - \frac 8 \kappa}
\,
 e^{-\beta (|z|/s)^2}
 \,
 y_z^{  \frac 8 \kappa - 2}
 dA(z) & \leq &  \int_\Half |z|^{ - \frac \kappa 2
 }
\,
 e^{-\beta (|z|/s)^2}
 \, dA(z)\\
& =&  s^{2 -\frac \kappa 2}
  \int_0^\infty r^{1 - \frac \kappa 2}\,
  e^{-\beta r^2} \, dr .
\end{eqnarray*}
Hence it suffices to prove \eqref{jan15.13}.

Using \eqref{phi2},
we see that  $\Phi(z)$ is bounded by a constant times
\[  \Phi^*(z):=  \int_\Half K(z,w) \, dA(w) , \]
where
\[ K(z,w) =  1\{{0 < y_w < 2a}\} \,
   y_w^{\frac 8 \kappa
-2} \, |w|^{2 - \frac 8 \kappa}
  \, |z-w|^{-\frac \kappa 4} \,
  |z-\overline w|^{-\frac \kappa 4}\, e^{-\beta
 x_w^2}
   .\]
We write $\Phi^*(z) = \Phi_1(z) +\Phi_2(z)  +
 \Phi_3(z) $ where
\[  \Phi_1(z) = \int_{
|z-w| \leq y_z/2}
  K(z,w) \, dA(w)  ,\]
\[  \Phi_2(z) = \int_{ y_z/2 < |z-w|
 < |z|/2}
K(z,w) \,dA(w)  .\]
\[ \Phi_3(z) = \int_{
|z-w| \geq |z|/2}
 K(z,w)  \,dA(w)  .\]

If $|z-w| \leq y_z/2$ and $y_w \leq 2a$, then
 \[ y_w \leq 2a,\;\;
|w| \asymp |z|, \;\;
  y_w \asymp y_z, \;\;
 |z-w|^{-\frac \kappa 4} \,
  |z-\overline w|^{-\frac \kappa 4} \asymp
    |z-w|^{-\frac \kappa 4} \, y_z^{-\frac
 \kappa 4} .\]
Also,
\[  x_w^2 \geq |w|^2 - (2a)^2 \geq \frac{|z|^2}{4}
   - (2a)^2.\]
Hence
\[\Phi_1(z) \leq c \, e^{-\beta |z|^2/4} \,
|z|^{2 - \frac 8 \kappa}
  \, y_z^{\frac 8 \kappa - \frac \kappa 4 -2}
  \, \int_{|w-z| < y_z/2} |z-w|^{-\frac \kappa 4}
  \, dA(z) . \]
Therefore,
\begin{eqnarray*}
\Phi_1(z)  &  \leq &
 c \, e^{-\beta |z|^2/2} \,
|z|^{2 - \frac 8 \kappa}
  \, y_z^{\frac 8 \kappa - \frac \kappa 2}\, 1\{y_z \leq 4a\}\\
&  \leq & c\,  |z|^{2 - \frac \kappa 2}\, e^{-\beta|z|^2/2}
  \leq c\, |z|^{- \frac \kappa 2} .
\end{eqnarray*}

Suppose $|z-w| \geq |z|/2$.  Then
$|z-w| \asymp $ $|z|$  for  $|w| \leq 2|z|$,  and
$|z-w| $ $ \asymp$ $
|w|$ for $|w| \geq 2|z|$.
Using $\kappa < 8$,
\[ \int_{y_w < 2a, |w-z| \geq |z|/2, |w|
  \leq 2|z|}
y_w^{\frac 8 \kappa
-2} \, |w|^{2 - \frac 8 \kappa}
  \, |z-w|^{-\frac \kappa 2} \,
   e^{-\beta
 x_w^2}
   \, dA(w)
 \leq   \]
\[  c\, |z|^{-\frac \kappa 2}   \int_{y_w < 2a,
|w| \leq 2|z|}
 [\sin \theta_w]^{\frac 8 \kappa - 2}
  \, e^{-\beta x_w^2}  \, dA(w)
\leq c\, |z|^{- \frac \kappa 2} \,[|z|^2\wedge 1]
.  \]
\[ \int_{y_w < 2a, |w| > 2|z|}
y_w^{\frac 8 \kappa
-2} \, |w|^{2 - \frac 8 \kappa}
  \, |z-w|^{-\frac \kappa 2} \,
   e^{-\beta
 x_w^2}
   \, dA(w)
 \leq \hspace{1in} \]
\[  c e^{-\beta |z|^2}  \int_{y_w \leq 2a,
|w| > 2|z|} |w|^{-\frac \kappa 2}
 [\sin \theta_w]^{\frac 8 \kappa - 2}
 \, e^{-\beta x_w^2/2}  \, dA(w)
\leq c\, |z|^{- \frac \kappa 2}  .\]
Therefore, $\Phi_3(z) \leq c \, |z|^{-\frac
 \kappa 2}.$

We now consider $\Phi_2(z)$ which is bounded above
by a constant times
\[  \int_{y_w \leq 2a, \; y_z/2 < |z-w| < |z|/2}
   y_w^{\frac 8 \kappa - 2} \, |w|^{2 - \frac 8 \kappa}
 \, |z-w|^{- \frac \kappa 2} \, e^{-\beta x_w^2} \,
  d A(w) . \]
Note that for $w$ in this range,
$x_w^2 \geq |w|^2 - (2a)^2 \geq  (|z|/2)^2
 - (2a)^2$  and $|w| \asymp |z|$
and hence we can bound this by
\[  c e^{-\beta|z|^2/4} \, |z|^{2
 - \frac 8 \kappa}
 \int_{y_w \leq 2a, \; y_z/2 < |z-w| < |z|/2}
   y_w^{\frac 8 \kappa - 2}
 \; |z-w|^{- \frac \kappa 2} \,
  d A(w) . \]
The change of variables $w \mapsto w - x_z$ changes
the integral to
\[ \int_{y_w \leq 2a, \; y_z/2 < |w-iy_z| < |z|/2}
   y_w^{\frac 8 \kappa - 2}
 \, |w-iy_z|^{- \frac \kappa 2} \,
  d A(w) . \]
We split this integral into the integral over $|w| \leq
2 y_z$ and $|w| > 2 y_z$.  The integral
over $|w| \leq 2 y_z$ is bounded by a constant
times
\[ y_z^{-\frac \kappa 2} \, \int_{|w| \leq
  2 y_z} \, y_w^{\frac 8 \kappa - 2}
  \leq c\, y_z^{\frac 8 \kappa - \frac \kappa 2}
  \leq c \,|z|^{\frac 8 \kappa - \frac \kappa 2}. \]
The integral over $|w| > 2y_z$ is bounded by a constant
times
\[  \int_{y_w \leq 2a, 2y_z \leq |w| \leq
|z|/2}
    y_w^{\frac 8 \kappa - 2} \, |w|^{- \frac \kappa 2}
\, dA(w) \leq c
 \,|z|^{\frac 8 \kappa - \frac \kappa 2}. \]
We therefore get
\[   \Phi_2(z) \leq c \, e^{-\beta|z|^2/4} \,
  |z|^{2- \frac 8 \kappa} \,
|z|^{\frac 8 \kappa - \frac \kappa 2}
\leq
 c \, |z|^{2 - \frac \kappa 2} \,e^{-\beta |z|^2/4}
 \leq c \, |z|^{-\frac \kappa 2}.\]

\end{proof}

\subsection{Second moment}

\begin{lemma}  \label{apr13.lemma1}
If $ \kappa < 8$ and $m < \infty$, there
exist $c = c_m$ such that if  and $D \in \domain_m$,
  $z,w \in \Half$
and $2at \geq y_z,y_w$,
\begin{equation}  \label{apr11.1}
F_{tD}(z,w,  t^2)
    \leq
c \, t^{-\zeta} \,
  |z|^{\frac \zeta 2} \,
  |w|^{ \frac \zeta 2 }\,
 [\sin \theta_z]^{\frac \zeta 2- \frac 14 - \frac 2\kappa}
\;
[\sin \theta_w]^{\frac \zeta 2- \frac 14 - \frac 2 \kappa} ,
\end{equation}
where
 \begin{equation}  \label{zeta.2}
  \zeta  =
  \frac 4 \kappa - \frac{3\kappa}{16} - 1.
 \end{equation}
\end{lemma}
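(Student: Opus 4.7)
The plan is to combine the two-point reverse-time martingale of Lemma \ref{jan17.lemma} with a Girsanov change of measure, replacing the short-distance singularity at $z=w$ present in Proposition \ref{prop22} by a probability that can be controlled using the one-point moment bound \eqref{firstmom}. By the scaling relation recorded in the paragraph following \eqref{imp1}, $F_{tD}(z,w,t^2)=F_D(z/t,w/t,1)$, so it suffices to prove the bound at $t=1$ under the hypothesis $y_z,y_w\leq 2a$.

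For a parameter $r>0$ to be selected, Lemma \ref{jan17.lemma} with $m=2$ produces the positive martingale
\[ N_s=|h_s'(z)|^\lambda |h_s'(w)|^\lambda \, Y_s(z)^{-r^2\kappa/8}Y_s(w)^{-r^2\kappa/8} \, |Z_s(z)|^r|Z_s(w)|^r \, |Z_s(z)-Z_s(w)|^{-r^2\kappa/2}|h_s(z)-\overline{h_s(w)}|^{-r^2\kappa/2}, \]
with $\lambda=r(1+\kappa/4)-\kappa r^2/8$. Both roots $r=1$ and $r=1+8/\kappa$ of the equation $\lambda=d$ are natural candidates; Proposition \ref{prop22} corresponds to $r=1$, and obtaining the improved bound requires taking $r=1+8/\kappa$, which imposes a stronger repulsion exponent $r^2\kappa/2=\kappa/2+8+32/\kappa$ on the separation $|Z_s(z)-Z_s(w)|$. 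Defining $Q$ by $dQ=(N_1/N_0)\,dP$, and observing that on $\I_1(z,w;D)$ with $D\in\domain_m$ the quantities $Y_1(z),Y_1(w),|Z_1(z)|,|Z_1(w)|,|h_1(z)-\overline{h_1(w)}|$ are all of order one and $|Z_1(z)-Z_1(w)|\leq 2m$, writing $|h_1'(z)|^d|h_1'(w)|^d\,\I_1=N_1\cdot R_1\cdot \I_1$ with $R_1$ the product of reciprocals of the auxiliary factors gives
\[ F_D(z,w,1)\leq c_m\,N_0\cdot Q\!\left[\I_1(z,w;D)\right]. \]

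Under $Q$, by Girsanov the driving Brownian motion acquires the drift $r(X_s(z)/|Z_s(z)|^2+X_s(w)/|Z_s(w)|^2)\,ds$, turning $(Z_s(z),Z_s(w))$ into a coupled two-dimensional diffusion whose separation is tilted away from zero. The core step is to bound $Q[\I_1(z,w;D)]$ by the product $|z-w|^{r^2\kappa/2}|z-\overline w|^{r^2\kappa/2}$ times appropriate powers of $|z|,|w|,\sin\theta_z,\sin\theta_w$. I would decompose according to the first time at which the hyperbolic separation between $Z_s(z)$ and $Z_s(w)$ becomes of order one: before this time, the dynamics of $|Z_s(z)-Z_s(w)|$ follows a Bessel-type SDE under $Q$ whose hitting probability of macroscopic separation is readily estimated from the boundary exponents; after this time, the two flows are sufficiently decorrelated that the one-point bound \eqref{firstmom} can be applied to each marginal separately.

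The principal obstacle is the matching of exponents. The $Q$-probability estimate must be shown to cancel exactly the singular factor $|z-w|^{-r^2\kappa/2}|z-\overline w|^{-r^2\kappa/2}$ in $N_0$, and the residual powers of $|z|,|w|$ and $\sin\theta_z,\sin\theta_w$ must collapse precisely to $\zeta/2$ and $\zeta/2-1/4-2/\kappa$ respectively, with $\zeta=4/\kappa-3\kappa/16-1$. The exponent $\zeta$ does not arise from any single algebraic identity in $r$ and $\kappa$, so the bookkeeping — in particular the choice of how to split the integration over the escape time of the separation — is the subtlest point of the argument, and would be where I would expect the most care to be required.
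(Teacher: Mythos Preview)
Your proposal heads in a much harder direction than necessary and leaves the crucial step undone.  Notice first that the right-hand side of \eqref{apr11.1} is a \emph{product} of a function of $z$ and the same function of $w$, with no cross term.  This is a strong hint, and indeed the paper's proof begins with the Cauchy--Schwarz inequality
\[
\E\!\left[|h'_{t^2}(z)|^d|h'_{t^2}(w)|^d\,\I_{t^2}(z,w;tD)\right]
\le \left(\E\!\left[|h'_{t^2}(z)|^{2d}\,\I_{t^2}(z;tD)\right]\right)^{1/2}
\left(\E\!\left[|h'_{t^2}(w)|^{2d}\,\I_{t^2}(w;tD)\right]\right)^{1/2},
\]
which reduces the two-point estimate to the diagonal $z=w$, i.e.\ to a \emph{one-point} $2d$-moment.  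After scaling to $y_z=1$, the paper uses the one-point martingale \eqref{martingale} with the parameter
\[
r=\tfrac4\kappa+\tfrac12,\qquad \lambda=\tfrac2\kappa+\tfrac{3\kappa}{32}+1,
\]
(neither of the roots $r=1$ or $r=1+8/\kappa$ of $\lambda=d$), obtaining a bound on $\E[|h'_{t^2}(z)|^\lambda\,\I_{t^2}(z;tD)]$ directly from $\E[N_{t^2}]=N_0$ and the fact that $Y_{t^2}\asymp|Z_{t^2}|\asymp t$ on the indicator.  The remaining gap $2d-\lambda=1-2/\kappa+5\kappa/32$ is then closed trivially using the deterministic bound $|h'_{t^2}(z)|\le ct$ from \eqref{devbound}.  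The exponent $\zeta$ falls out of this arithmetic; there is no delicate matching.

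By contrast, your two-point approach with $r=1+8/\kappa$ produces in $N_0$ the extremely singular factor $|z-w|^{-(\kappa/2+8+32/\kappa)}$, and you then need the tilted probability $Q[\I_1(z,w;D)]$ to compensate it exactly.  You acknowledge that this is ``the subtlest point'' and do not carry it out; in fact this step would require a sharp two-point exponent computation for the coupled diffusion under $Q$, which is substantially harder than the lemma you are trying to prove (and is closer in spirit to the work the paper defers to Section~\ref{hardsec}).  So as written the proposal has a genuine gap at its central step, and the route you chose made that gap unavoidable.
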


\begin{proof}  By the Cauchy-Schwarz inequality, it suffices
to prove the result for $z=w$ and by scaling we may assume
$y_z=1$.
Therefore, it suffices to prove
\[  F_D(x+i,x+i,t^2) \leq c \, t^{-\zeta}
  \, (x^2 + 1)^{\frac 14 + \frac
 2 \kappa} , \;\;\;\;\; t \geq 1/2a. \]
We let $z = x+i$ and write $Z_t = X_t + i Y_t= h_t(z) - U_t.$
Consider the martingale $N_t = N_t(z)$ as in \eqref{martingale}
with
\begin{equation}  \label{arbitrary}
  r = \frac 4 \kappa + \frac 12, \;\;\;\; \lambda = \frac
  2 \kappa  + \frac {3\kappa}
  {32} + 1 , \;\;\;\; r - \frac{\kappa r^2}{8} =
  \lambda - \frac{\kappa \, r}{4} =
\frac 2 \kappa - \frac \kappa {32}.
\end{equation}
Since $\E[N_{t^2}] = M_0$ and $Y_{t^2},
|Z_{t^2}|  \asymp
t$ when $\I_{t^2}(z;tD) = 1$,
\[  \E\left[|h_{t^2}'(z)|^{ \frac
  2 \kappa  + \frac {3\kappa}
  {32} + 1 }
   \;\I_{t^2}(z;tD)  \right]
  \leq  c\,t^{\frac \kappa {32} - \frac
  2 \kappa }
  \, (x^2 + 1)^{\frac 2 \kappa + \frac 14}. \]
>From \eqref{devbound}, we know that
\[   |h_{t^2}'(z)| \, \leq \sqrt{2at^2 + 1 }
  \leq c \, t.\]
Note that
\[  2d -  \left( \frac
  2 \kappa  + \frac {3\kappa}
  {32} + 1 \right) =  1 - \frac 2 \kappa  + \frac{5
 \kappa }{32} . \]
Hence
\begin{eqnarray*}\E\left[|h_{t^2}'(z)|^{2d}
   \,  \I_{t^2}(z;tD)\right]
& \leq & c \, t^{ 1 - \frac 2 \kappa  + \frac{5\kappa }{32} } \,
\E\left[|h_{t^2}'(z)|^{\frac
  2 \kappa  + \frac {3\kappa}
  {32} + 1 }
   \,  \I_{t^2}(z;tD) \right]\\
 & \leq & c \, t^{-\zeta} \, (x^2 + 1)^{\frac
 2 \kappa + \frac 14 }.
\end{eqnarray*}
\end{proof}

\begin{remark}  We have not given the motivation for
the choice \eqref{arbitrary}.  See \cite{Lmulti} for a
discussion of this.
\end{remark}

\begin{remark}
The   estimate \eqref{apr11.1}
for $z \neq w$ makes use of the
Cauchy-Schwarz inequality
\[ \left(\E\left[|h_t'(z)|^d \, |h_t'(w)|^d \,\I_{t}(z,w;D)
\right]\right)^2
   \leq \hspace{1in} \]
\[ \hspace{1in} \E\left[|h_t'(z)|^{2d}\, \I_t(z;D)\right]
  \, \E\left[|h_t'(w)|^{2d}\, \I_t(w;D)\right].\]
If $z$ and $w$ are close (for example, if $w$ is in the disk of radius
$\Im(z)/2$ about $z$), then the distortion theorem tells
us that $|h_t'(w)| \asymp
|h_t'(z)|$ and then the two sides of the
inequality agree up to
a multiplicative constant. However, if $z,w$ are far apart
(in the hyperbolic metric), the right-hand side can be much
larger than the left-hand side.
  Improving this estimate for $z,w$ far apart is
the key for proving good second moment bounds.
\end{remark}

The next lemma proves the $s=0$
case of \eqref{aug10.11.2}.  A similar argument
proves \eqref{aug10.11.2} for all $0 \leq s \leq
3(1+a)$, so in the next section we can restrict
our consideration to $s \geq 3(1+a)$.

\begin{lemma} \label{66} If $ \kappa < \kappa_0,$ there is a $c< \infty$
such that for all $t \geq 1$,
\[  \int_\Half \int_\Half F(z,w,t^2) \, d\mu(z) \, d\mu(w)
 \leq c \, t^{-\zeta}.\]
\end{lemma}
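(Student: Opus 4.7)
The plan is to substitute the pointwise bound from Lemma~\ref{apr13.lemma1} directly into the double integral and verify that the resulting integral is finite precisely when $\kappa < \kappa_0$. First I need the hypothesis $2at \geq y_z, y_w$ of that lemma to hold for $\mu$-a.e.\ $z,w$; this is immediate since by \eqref{phi2} the measure $\mu$ is supported on $\{y \leq 2a\}$, and $t \geq 1$ gives $2at \geq 2a$.

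Since the bound of Lemma~\ref{apr13.lemma1} factors as a product of a function of $z$ and a function of $w$, Fubini gives
\[
\int_\Half \int_\Half F(z,w,t^2)\, d\mu(z)\, d\mu(w) \leq c\, t^{-\zeta}\, J^2, \qquad J := \int_\Half |z|^{\zeta/2}\, [\sin \theta_z]^{\zeta/2 - 1/4 - 2/\kappa}\, d\mu(z),
\]
so matters reduce to showing $J < \infty$. Using the upper estimate \eqref{phi3} on $d\mu$ and passing to polar coordinates $z = re^{i\theta_z}$, $J$ is dominated by a constant multiple of
\[
\int_0^\infty r^{\zeta/2 + \kappa/8}\, e^{-\beta r^2}\, dr \; \cdot \; \int_0^\pi [\sin\theta]^{\alpha}\, d\theta, \qquad \alpha := \tfrac{\zeta}{2} + \tfrac{\kappa}{8} + \tfrac{6}{\kappa} - \tfrac{9}{4}.
\]
The radial integral is finite: near $r=0$ the exponent $\zeta/2 + \kappa/8$ is positive (since $\zeta > 0$ for $\kappa < \kappa_0$ and $\kappa > 0$), and there is Gaussian decay at infinity.

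The one genuine obstacle is the integrability of $\sin^\alpha \theta$ near $\theta = 0, \pi$, which requires $\alpha > -1$. Substituting $\zeta/2 = 2/\kappa - 3\kappa/32 - 1/2$ into the definition of $\alpha$, the condition $\alpha > -1$ simplifies to $8/\kappa + \kappa/32 > 7/4$, or equivalently $16/\kappa + \kappa/16 > 7/2$. This is precisely the condition \eqref{kappa} that defines the threshold $\kappa_0 = 4(7 - \sqrt{33})$; indeed the paper flags \eqref{kappa} in advance as the estimate ``which puts the restriction on $\kappa$.'' With this integrability in hand, $J < \infty$ and the lemma follows.
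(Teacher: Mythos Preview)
Your proof is correct and follows the same approach as the paper's: apply the pointwise bound from Lemma~\ref{apr13.lemma1}, factor the double integral, bound $d\mu$ via \eqref{phi3}, and reduce to the angular integrability condition $\alpha > -1$, which is exactly \eqref{kappa}. One small slip: your claim that $\zeta > 0$ for $\kappa < \kappa_0$ is false (for instance $\zeta = -3/4$ at $\kappa = 4$); however, the radial exponent $\zeta/2 + \kappa/8 = 2/\kappa + \kappa/32 - 1/2$ is nonetheless positive for all $\kappa \in (0,8)$, so the radial integral is indeed finite and your conclusion stands.
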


\begin{proof}

Since $t \geq 1$,
\eqref{apr11.1} gives
\[      F(z,w,t^2) \leq
c \, t^{-\zeta} \,
  |z|^{\frac \zeta 2} \,
  |w|^{ \frac \zeta 2 }\,
 [\sin \theta_z]^{- \frac{3 \kappa}{32} - \frac
 34}
\,
[\sin \theta_w]^{- \frac{3 \kappa}{32} - \frac
 34} .\]
Hence by \eqref{phi3} it suffices to show that
\[ \int_\Half  |z|^{\frac \zeta 2} \,
[\sin \theta_z]^{- \frac{3 \kappa}{32} - \frac
 34}
\; [\sin \theta_z]^{\frac \kappa 8 + \frac
 8 \kappa - 2} \, |z|^{\frac  \kappa 8-1}
  \, e^{-\beta|z|^2} \, dA(z) < \infty.\]
This will be true provided that
\[  - \frac{3 \kappa}{32} - \frac
 34 + \frac \kappa 8 + \frac
 8 \kappa - 2  > -1, \]
which holds for $\kappa < \kappa_0$
(see \eqref{kappa}). \end{proof}

\subsection{The correlation}

In this section, we state the  hardest estimate and then
show how it can  be used to prove the main result.
It will be useful to introduce some notation.
For $s \geq 3(1+a)$,
 let
\[  v(w,s) = v_m(w,s) = s^{2-d - \frac \zeta 2}  \,
\sup \, \left[t^\zeta \, y_z^{-\frac \zeta2 } \,
  \, [\sin \theta_z]^{ \frac 14 + \frac 2 \kappa} \, F_{tD}(w,z,t^2)\right]
 , \]
where
the supremum is over all $D \in \domain_m$, $t \geq 2s$ and all
$z\in \Half$ with
$|z|  \geq 3(1+a)s$.
In other words, if
$t \geq |z| \geq 3(1+a)$,
\begin{eqnarray}
  F_{tD}(w,z,t^2) & \leq   &  c
 \, t^{-\zeta} \,|z|^{d-2+
\frac \zeta 2} \, y_z^{\frac \zeta 2}
 \, [\sin \theta_z]^{-\frac 1 4 - \frac
   2 \kappa}\, v(w,|z|)\nonumber \\
  & = & c \, t^{-\zeta}
  \, [\sin \theta_z]^{\frac \zeta 2 -
 \frac 14 - \frac 2 \kappa} \, |z|
^{d-2+ \zeta} \, v(w,|z|).
  \label{435}
 \end{eqnarray}
The main estimate is the following.  The hardest part,
\eqref{aug11.1}, will be proved in the next subsection.

\begin{proposition}  \label{prop.aug1}
If $\kappa < \kappa_0$, there exists $u< \frac 8
 \kappa $  such that for each $m$ there
exists  $c< \infty$
 such that for all $s \geq 3(a+1)$ and $w
\in \Half$ with $y_w\leq 2a$,
\begin{equation}  \label{aug11.1}
   v(w,s)
\leq c \, [\sin \theta_w]^{2-d-u} \, |w|^{2-d}
 = c [\sin \theta_w]^{1 - \frac \kappa 8-u} \, |w|^{2-d}
 .
\end{equation}
In particular,
\[   \int v(w,s) \, d\mu(w) < c , \]
and hence if $D \in \domain_m$ and
$z \in \Half$,
\begin{equation}   \label{aug9.5}
 \int  F_{tD}(w,z,t^2) \, d\mu(w) \leq c  \, t^{-\zeta}\,
  [\sin \theta_z]^{\frac \zeta 2 -
 \frac 14 - \frac 2 \kappa} \, |z|
^{d-2+ \zeta} .
\end{equation}
\end{proposition}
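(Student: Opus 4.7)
The strategy is to improve on the Cauchy--Schwarz bound of Lemma \ref{apr13.lemma1} by exploiting the scale separation: $w$ sits near the real axis ($y_w\le 2a$), while $|z|\ge 3(1+a)s$ and $t\ge 2s$. The plan is to use a Girsanov change of measure based on the one-point reverse-time martingale from Lemma \ref{jan17.lemma} (with $m=1$ and parameters $(r,\lambda)$ near those of \eqref{arbitrary}), combined with the single-point first-moment estimate \eqref{firstmom} applied to $z$ under the new measure.

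Using the identification $|h_{t^2}'(w)|^\lambda = N_{t^2}(w)\,Y_{t^2}(w)^{\kappa r^2/8}|Z_{t^2}(w)|^{-r}$ and Girsanov-weighting by $N_{t^2}(w)/N_0(w)$, one obtains
\[
F_{tD}(w,z,t^2) = N_0(w)\,\E^Q\!\bigl[|h_{t^2}'(w)|^{d-\lambda}\,Y_{t^2}(w)^{\kappa r^2/8}\,|Z_{t^2}(w)|^{-r}\,|h_{t^2}'(z)|^d\,\I_{t^2}(w,z;tD)\bigr],
\]
with $N_0(w)=y_w^{-\kappa r^2/8}|w|^r=[\sin\theta_w]^{-\kappa r^2/8}|w|^{r-\kappa r^2/8}$ already contributing the target $w$-prefactor. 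The deterministic estimate \eqref{devbound} controls the $w$-factors inside the $Q$-expectation by a polynomial in $t$, so the main task is to bound $\E^Q\!\bigl[|h_{t^2}'(z)|^d\,\I_{t^2}(z;tD)\cdot(\text{ancillary terms})\bigr]$. Under $Q$, $Z_s(z)$ gains an extra drift $rX_s(w)/|Z_s(w)|^2\,ds$; since $|z|\ge 3(1+a)s\gg|w|$, this perturbation is small outside an initial time of order $s^2$. Splitting $[0,t^2]$ at $\sigma\asymp s^2$ via the reverse-time Markov property, on $[\sigma,t^2]$ the flow essentially decouples from $w$'s past, and the one-point estimate \eqref{firstmom} applied to the shifted reverse flow starting at $Z_\sigma(z)$ with remaining time $\asymp t^2$ yields the desired $t^{-\zeta}$ decay together with the correct $|z|$ and $[\sin\theta_z]$ dependence, while on $[0,\sigma]$ crude bounds on $|h_\sigma'(z)|$ suffice.

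Collecting the powers of $s$, $t$, and the angle/magnitude factors produces \eqref{aug11.1} with exponent $u$ depending on the choice of $r$ and of $\hat u$ (from \eqref{firstmom}); the constraint $u<8/\kappa$ is achievable exactly when $\kappa<\kappa_0$, via the inequality \eqref{kappa}. For the integrated bound, insert \eqref{aug11.1} into $\int v(w,s)\,d\mu(w)$ and use \eqref{phi3}: the integrand behaves like $[\sin\theta_w]^{2-d-u+\kappa/8+8/\kappa-2}$ near $\theta_w=0$, which is integrable iff $u<8/\kappa$, giving \eqref{aug9.5}. The main obstacle is quantifying the effect of the Girsanov-induced drift on the $z$-coordinate precisely enough to apply the independent one-point estimate without losing the decay in $t$; the separation of scales $|z|\gg|w|$ is what makes this possible, and tracking the exponents reduces to the numerical condition $\kappa<\kappa_0$.
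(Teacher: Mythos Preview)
Your derivation of the integral bound $\int v(w,s)\,d\mu(w)<c$ and of \eqref{aug9.5} from \eqref{aug11.1} is correct and matches the paper. The problem lies in your sketch for \eqref{aug11.1} itself.

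The paper does \emph{not} Girsanov-weight by $N_{t^2}(w)$ over the full interval $[0,t^2]$ and then try to apply a one-point bound to $z$. Instead (after scaling to $s=1$, so $|z|\ge 3(1+a)$ and $t\ge 2$), it constructs in Lemma~\ref{aug10.lemma1} a stopping time $\tau\le 1$ with the crucial feature that the driving function stays deterministically bounded, $|U_r|\le a+2$ for $r\le\tau$. Because $|z|\ge 3(1+a)$, the Loewner equation then forces $|h_r(z)-z|\le 1$ and $|h_\tau'(z)|\le e^a$: the point $z$ is essentially frozen on $[0,\tau]$. One then conditions on $\G_\tau$, writes
\[
\E[\Lambda\mid\G_\tau]=|h_\tau'(z)|^d\,|h_\tau'(w)|^d\,F_{tD}(Z_\tau(z),Z_\tau(w),t^2-\tau),
\]
and applies the \emph{two-point} Cauchy--Schwarz bound \eqref{apr11.1} to the right-hand side. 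This is what produces $t^{-\zeta}$ and the correct $z$-factors (since $Z_\tau(z)\approx z$). The remaining expectation $\E\bigl[|h_\tau'(w)|^d\,Y_\tau(w)^{\zeta/2}(R_\tau(w)^2+1)^{1/8+1/\kappa}\bigr]$ is handled by Lemma~\ref{aug10.lemma1}, which uses Girsanov with $N_t(w)$ but only on $[0,\tau]$ and in the reparametrization where $Y_t(w)$ grows deterministically; the condition $\kappa<\kappa_0$ enters precisely in summing the resulting geometric series.

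Your proposal has two concrete gaps. First, \eqref{firstmom} yields decay $t^{d-2}$, not $t^{-\zeta}$; these exponents are different, and the $t^{-\zeta}$ in the target comes from the diagonal two-point bound \eqref{apr11.1}, not from any one-point estimate. Second, under your tilted measure $Q$ the driving Brownian motion acquires the drift $r\,X_r(w)/|Z_r(w)|^2\,dr$ for all $r\in[0,t^2]$, so the $z$-flow is no longer a standard reverse $SLE$ and \eqref{firstmom} does not apply to it; splitting at $\sigma\asymp s^2$ via the Markov property does not remove this drift on $[\sigma,t^2]$. The size of the drift depends on the trajectory of $Z_r(w)$, not on the relation between $|z|$ and $|w|$, so ``scale separation'' alone does not control it. The paper's stopping-time construction is exactly the device that converts the scale separation into the deterministic statement ``$z$ does not move on $[0,\tau]$,'' thereby avoiding any need to analyze the $z$-flow under a tilted measure.
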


\begin{proof}  We delay the proof
of \eqref{aug11.1} to Section \ref{hardsec}, but we will
show here how it implies the other
two statements.
Using \eqref{phi3}, we have
\[
   \int v(w,s) \, d\mu(w)  \leq  \hspace{2in}\]
\[
  c \, \int [\sin \theta_w]^{1- \frac \kappa 8 -u} \, |w|^{2-d}
 \, |w|^{d-2} \, [\sin \theta_w]^{\frac \kappa 8 +
 \frac 8 \kappa -2} \, e^{-\beta |w|^2} \, dA(w) < \infty.
\]  The last inequality uses $u < 8/\kappa$.
The estimate \eqref{aug9.5}  for $|z| \geq 3(1+a)$
follows
immediately from \eqref{435}; for other $z$ it is proved
as in Lemma \ref{66}.
\end{proof}

\begin{corollary} If $\kappa < \kappa_0$, then for every
$m$ there is a $c$ such that if $D \in \domain_m$,
$s^2 \geq 1, t^2 \geq 1$.
\[ \sum_{j=0}^{s^2-1}
\int_\Half \int_\Half F_{tD}(z,w,j + t^2,t^2) \, d\mu(w)
  \, d\mu(z) \leq
   c \, (t/s)^{-\zeta} s^{2(d-1)}. \]
\end{corollary}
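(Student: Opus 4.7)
The plan is to combine three ingredients already established in the excerpt: the telescoping identity from Lemma \ref{aug11.lemma4}, the one-sided integral bound \eqref{aug9.5} of Proposition \ref{prop.aug1}, and the angular integrability delivered by the defining inequality \eqref{kappa} of $\kappa_0$.

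First, I would apply Lemma \ref{aug11.lemma4} with domain $tD \in \domain$ and single-time parameter $t^2$. This collapses the $s^2$-term sum into a single two-point integral, trading summation in $j$ for the heavier measure $d\mu_{s^2}$ on one factor:
\[\sum_{j=0}^{s^2-1}\int_\Half\int_\Half F_{tD}(z,w,j+t^2,t^2)\,d\mu(w)\,d\mu(z) = \int_\Half\int_\Half F_{tD}(z,w,t^2)\,d\mu(w)\,d\mu_{s^2}(z).\]
Since $F_{tD}(z,w,t^2)$ is symmetric in its two spatial arguments, I would then perform the $w$-integration using \eqref{aug9.5}, which reduces the problem to bounding
\[\int_\Half [\sin\theta_z]^{\frac{\zeta}{2} - \frac{1}{4} - \frac{2}{\kappa}}\,|z|^{d-2+\zeta}\,d\mu_{s^2}(z)\]
by a constant times $s^{2(d-1)+\zeta}$.

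For this last step, I would insert \eqref{phi3} for $d\mu_{s^2}$, pass to polar coordinates, and substitute $z = s u$. Homogeneity extracts the factor $s^{\zeta+\kappa/4} = s^{2(d-1)+\zeta}$ (using $d-1 = \kappa/8$), leaving a $u$-integral that factors into a radial piece controlled by $e^{-\beta|u|^2}$ together with the mild condition $4/\kappa + \kappa/16 > 1$ (valid for all $\kappa < 8$), and an angular piece $\int_0^\pi [\sin\theta]^\alpha\, d\theta$ whose exponent $\alpha = \frac{8}{\kappa} + \frac{\kappa}{32} - \frac{11}{4}$ (after substituting $\zeta = \frac{4}{\kappa} - \frac{3\kappa}{16} - 1$) must exceed $-1$. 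This last requirement simplifies to $\frac{16}{\kappa} + \frac{\kappa}{16} > \frac{7}{2}$, which is precisely \eqref{kappa}. Combined with the $t^{-\zeta}$ factor from \eqref{aug9.5}, this yields the claimed bound $c\,(t/s)^{-\zeta}\,s^{2(d-1)}$. All of the genuinely hard work has already been absorbed into Proposition \ref{prop.aug1} (especially \eqref{aug11.1}, still to be proved in the next subsection); the only test this corollary must pass on its own is the angular integrability, which is exactly the condition built into the definition of $\kappa_0$.
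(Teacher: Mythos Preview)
Your argument is correct and follows the same route as the paper: apply Lemma \ref{aug11.lemma4} to collapse the sum, integrate out $w$ via \eqref{aug9.5}, plug in \eqref{phi3} for $d\mu_{s^2}$, and scale $z = su$. The only difference is that the paper is terser, writing ``estimating as in Lemma \ref{66}'' where you explicitly compute the angular exponent $\alpha = \frac{8}{\kappa} + \frac{\kappa}{32} - \frac{11}{4}$ and verify that $\alpha > -1$ reduces to \eqref{kappa}; your version makes that step more transparent.
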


\begin{proof}[Proof assuming Proposition \ref{prop.aug1}]
>From Lemma \ref{aug11.lemma4} and \eqref{phi3}, we know that
\[ \sum_{j=0}^{s^2-1}
\int_\Half \int_\Half F(z,w,j + t^2,t^2) \, d\mu(w) \,
 d\mu(z)
\leq \hspace{1in}\]
\[c \int_\Half
 \left[\int_\Half  F(z,w,t^2)  \, d\mu(w) \right]
    \,[\sin \theta_z]^{\frac \kappa 8
  + \frac 8 \kappa - 2} \, |z|^{d-2} \,
 e^{-\beta |z|^2/s^2}
\,dA(z) .\]
Using the previous lemma and estimating as in Lemma
\ref{66}, we see that for $\kappa < \kappa_0$ this
is bounded by a constant times
\[  t^{-\zeta}
 \int_{\Half}|z|^{\zeta + 2(d-2)} \, e^{-\beta
  |z|^2/s^2} \, dA(z) = c \,t^{-\zeta} \,  s^{\zeta + 2d-2}.\]
 \end{proof}

\subsection{Proof of \eqref{aug11.1}}  \label{hardsec}

It was first observed
 in \cite{RS}  that
when studying moments of $|h_t'(z)|$ for a fixed $z$ it is
useful to consider a  paraametrization such
that $Y_t(z)$ grows deterministically. The next
lemma uses this reparametrization to get a result about fixed
time.  The idea is to have a stopping time in the new
parametrization that corresponds to a bounded stopping
time in the original parametrization.  A version of
this stopping time appears in \cite{Lmulti} in the
proof of the first moment estimate.
If $\kappa < \kappa_0$, there exists
 $u$ satisfying
\begin{equation}  \label{uvalue}
              \frac {7}4 - \frac{\kappa}
 {32} < u < \frac 8 \kappa.
\end{equation}
For convenience, we fix one such value of $u$.
Let $N_t(w)$ be the martingale from \eqref{martingale}
which we can write as
\[  N_t(w) = |h_t'(w)|^d \, Y_t(w)^{2-d} \,
         [R_t(w)^2 + 1]^{\frac 12},\;\;\;\;\; R_t(w) = X_t(w)/
  Y_t(w), \]
and recall $\hat u$ from \eqref{uhat}.

\begin{lemma} \label{aug10.lemma1}
 if $a > a_0$,
there exists $c$ such that the following is true.  For
each $t$ and each $w=x+yi$ with $y \leq t$,
there exists a stopping time
$\tau$ such that
\begin{itemize}
\item  \[ \tau \leq t^2,\]
\item
\begin{equation}  \label{aug10.1}
|U_s| \leq (a+2) \, t, \;\;\;\; 0 \leq s \leq \tau,
\end{equation}
\item
\begin{eqnarray*}
 \E\left[|h_{\tau}'(w)|^d \,
   Y_\tau^{\frac \zeta 2} \, (R_\tau^2 + 1)
 ^{\frac 18 + \frac 1 \kappa } \right] & =&
 \E\left[N_\tau \, Y_\tau^{\frac \zeta 2 + d-2}
  \, (R_\tau^2 + 1)^{\frac a 2 - \frac 38} \right] \\
 & \leq &  c\, (t+1)^{d-2+ \frac \zeta 2}
 \, |w|^{2-d} \,[\sin \theta_w]^{2-d -u}  .
\end{eqnarray*}
\end{itemize}
Here $N_s = N_s(w), Y_s = Y_s(w), R_s = X_s(w)/Y_s(w)$.

Moreover,
   if $a > 1/4$, there exists $c$ such that
\begin{equation} \label{aug3.4}
\E\left[|h_{t^2}'(w)|^{d} \right] \leq c\,
[(x/y)^2+1]^{\frac {\hat u}{2}} \,
   \left(\frac{t}{y} \vee 1 \right)^{d-2}.
\end{equation}
\end{lemma}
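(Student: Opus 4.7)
The strategy is to combine the martingale $N_t(w)$ of (\ref{martingale}) (with the parameters $r,\lambda$ specified in (\ref{arbitrary})) with a Girsanov change of measure and a time-change that makes the imaginary coordinate $Y_t(w)$ deterministic. Since $\p_t Y_t=aY_t/|Z_t|^2>0$, we can define $\sigma(s)$ by $Y_{\sigma(s)}=ye^{as}$; then $\dot\sigma(s)=|Z_{\sigma(s)}|^2=y^2e^{2as}(R_{\sigma(s)}^2+1)$. As in the forward-flow lemma earlier in the paper, the substitution $\sinh J_s=R_{\sigma(s)}$ converts $R$ into a one-dimensional diffusion with an explicit drift, both under $\Prob$ and under any Girsanov tilt obtained from a martingale of the form (\ref{martingale}).

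To construct $\tau$, set $T=\max\{0,(1/a)\log(t/y)\}$ so that $Y_{\sigma(T)}\le t$, and let
\[
\tau^*=T\wedge\inf\bigl\{s:|U_{\sigma(s)}|\ge(a+2)t\bigr\}\wedge\inf\Bigl\{s:\int_0^s y^2e^{2as'}(R_{\sigma(s')}^2+1)\,ds'\ge t^2\Bigr\},
\]
and $\tau=\sigma(\tau^*)$. The third event forces $\tau\le t^2$, and the second gives $|U_s|\le(a+2)t$ on $[0,\tau]$. Apply optional sampling to $N_t$ and change measure via $dQ=(N_\tau/N_0)\,d\Prob$, so that $\E[N_\tau F]=N_0\,\E^Q[F]$ with $N_0=y^{-\kappa/8}|w|$. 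Under $Q$, the process $J_s$ is a drifted one-dimensional diffusion, and the hypothesis $\kappa<\kappa_0$ via (\ref{uvalue}) produces a Gaussian-type tail bound $Q\{|J_{\tau^*}|\ge r\}\le c\,e^{-\alpha r^2}$ uniform in the stopping time. Translating this back through $\sinh J=R$ yields
\[
\E^Q\bigl[Y_\tau^{\zeta/2+d-2}(R_\tau^2+1)^{a/2-3/8}\bigr]\le c\,(t+1)^{d-2+\zeta/2}\,|w|^{1-d+\kappa/8}\,[\sin\theta_w]^{2-d-u},
\]
which combined with $N_0=y^{-\kappa/8}|w|$ yields the first assertion.

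The auxiliary inequality (\ref{aug3.4}) follows by the same scheme but with the simpler $r=1$ martingale $N_t=|h_t'|^d Y_t^{2-d}\sqrt{R_t^2+1}$, stopping time $\tau=t^2$, and the parameter $\hat u$ from (\ref{uhat}); only a one-point estimate for $J$ is required here, avoiding the full tilted-diffusion analysis. The main obstacle is exactly the Gaussian tail estimate on $J_s$ under the tilted measure: establishing it uniformly in $\tau^*$ requires the drift of $J_s$ to be sufficiently restoring, which is where the parameter tradeoff in (\ref{arbitrary}) and the restriction $\kappa<\kappa_0$ become essential. Once that is in place, the remaining computations are bookkeeping: track the power of $Y_\tau\in[y,t]$, the prefactor $N_0$, and convert Cartesian coordinates of $w$ into $|w|$ and $\sin\theta_w=y/|w|$.
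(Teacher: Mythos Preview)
Your framework is right---reparametrize so that $Y_{\sigma(s)}=ye^{as}$, tilt by the martingale $N$, and analyze the one-dimensional diffusion $J_s=\sinh^{-1}R_{\sigma(s)}$---but several of the concrete claims are off and the stopping time you propose does not do the job. First, the martingale: the identity in the lemma uses the $r=1$ martingale $N_t=|h_t'|^dY_t^{2-d}(R_t^2+1)^{1/2}$ (consistent with your $N_0=y^{-\kappa/8}|w|$), \emph{not} the one from (\ref{arbitrary}); that latter martingale was used in Lemma~\ref{apr13.lemma1} for the diagonal $2d$-moment and plays no role here. Second, under the $r=1$ tilt, $J_s$ satisfies $dJ_s=(\tfrac12-2a)\tanh J_s\,ds+dW_s$, whose invariant density is $\propto(\cosh x)^{1-4a}$: the tails are \emph{exponential}, $\Prob^*\{|J|\ge r\}\lesssim e^{-(4a-1)r}$, not Gaussian. (Under the tilt from (\ref{arbitrary}) the drift actually vanishes, so $J$ would be Brownian motion---certainly not ``sufficiently restoring.'')

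The more serious gap is the stopping time. Your $\tau^*$ stops when $|U|$ or the elapsed original time gets large, but this leaves $Y_\tau$ and $R_\tau$ unrelated at the stopping time, and you cannot then bound $\E^Q[Y_\tau^{\zeta/2+d-2}(R_\tau^2+1)^{a/2-3/8}]$. The paper instead (scaling to $y=1$, $t=e^{al}$) takes $\rho=\inf\{r:\sqrt{K_r^2+1}\ge e^{a(l-r)}/(l-r+1)^2\}$ and $\tau=\sigma(\rho)$. This single threshold forces $\sigma(\rho)\le t^2$ and $|U_s|\le(a+2)t$ as \emph{consequences} (not stopping criteria), and pins down $Y_\tau\asymp e^{a(l-j)}$, $R_\tau^2+1\asymp e^{2aj}/j^4$ on the event $A_j=\{l-j<\rho\le l-j+1\}$. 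One then uses the exponential tail of the tilted $J$ to get $\Prob^*(A_j)\le c\,j^\beta e^{(4a-1)a(m-j)}$ (with $x=e^{am}$) and sums over $j$. The restriction $\kappa<\kappa_0$ enters only at this last summation: convergence requires $-\tfrac\zeta2+2-d+a-\tfrac34+1-4a<0$, i.e.\ $4a+\tfrac1{16a}>\tfrac74$. The estimate (\ref{aug3.4}) is proved with the \emph{same} stopping time (not $\tau=t^2$), using $\E[N_{t^2}1_{A_j}]=\E[N_\tau 1_{A_j}]$ and the bound $Y_{t^2}\ge Y_\tau$; there the relevant summability holds for all $a>1/4$.
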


\begin{proof} By scaling, it suffices to prove
the lemma for $y = 1$, i.e., $w = x + i.$
Without loss of generality, we assume $x \geq 0.$
If $t \leq 1$, we can choose
the trivial stopping time $\tau \equiv 0$ and
\eqref{aug3.4} is easily derived from the Loewner equation.
Hence we may assume $t \geq 1$.
We write $t= e^{al}, x = e^{am}$.
For notational ease we will assume that $l,m$ are
  integers, but it is easy to adjust
the proof for other $l,m$. We will define the stopping
time for all $a > 1/4$; it will be used for proving
\eqref{aug3.4}.

We  consider
a  parametrization in which the
logarithm of the  imaginary
part grows linearly.
Let
\[  \sigma(s) = \inf\{u: Y_u = e^{as} \},\;\;\;\;
\hat X_s = X_{\sigma(s)},\;\;\;
  K_s = R_{\sigma(s)} = e^{-as} \, \hat X_{s} , \]
and note that $\hat Y_s = Y_{\sigma(s)} = e^{as}.$
Using the Loewner equation, we can see that
\[  \p_s\sigma(s) = \hat X_s^2 + \hat Y_s^2
   = e^{2as} \, (K_s^2 + 1). \]
Let $\hat N_s = N_{\sigma(s)}(x+i)$,
\[  \hat N_s  = \hat N_s(x+i) =
    |h_{\sigma(s)}'(x+i)|^d \, e^{(2-d)as} \,
         (K_s^2 + 1)^{\frac 12}. \]
Since $\hat N_s$ is a time change of a martingale,
it is easy to see that it is a martingale.
Note that $K_0 = x=e^{am}$.

We first define our stopping time in terms of the
new parametrization.   Let $\rho$ be  the smallest $r$
such that
\[     \hat X_r^2 + \hat Y_r^2 \geq
   \frac{e^{2al}}{(l-r+1)^4} , \;\;\;
\mbox{i.e., } \;\;\;\;
\sqrt{K_r^2 + 1} \geq \frac{e^{a(l-r)}}{(l-r+1)^2} . \]
One can readily check that the following properties hold.
   \[\rho \leq l, \]
\[  \rho = 0 \mbox { if } m \geq l , \]

   \[ \hat X_s^2 \leq \hat X_{s}^2  + \hat  Y_{s}^2
     \leq  \frac{e^{2al}}{(l-s + 1)^4} , \;\;\;\;
   0 \leq s \leq \rho,\]
\[  \sigma(\rho) = \int_0^\rho e^{2as}
  \, [K_s^2 + 1] \, ds  \leq \int_0^l \frac{e^{2al}}
   {(l-s+1)^4} \, ds  \leq e^{2al}, \]
 \[\int_0^\rho |\hat X_r| \, dr \leq \int_0^
   l \frac{e^{al}}
   {(l-s+1)^2} \, ds  \leq e^{al}   . \]

We define
\[ \tau = \sigma(\rho)  \leq e^{2al} ,\] i.e., $\tau$
is essentially the same
stopping time as $\rho$
except using the original parametrization.
Note that
\[  \int_0^\tau \frac{|X_t|}{X_t^2 + Y_t^2} \, dt
   = \int_0^\rho |\hat X_t| \, dt \leq e^{al}. \]

Recall that
\[  dX_s =  \frac{a \, X_s}{X_s^2 + Y_s^2} \, ds - dU_s, \]
which implies
\[    -U_s = (X_s-X_0) - \infty_0^s\frac{a \, X_r}{X_r^2 + Y_r^2} \, dr .\]
If $X_0 \geq e^{al}$, then $\tau =0$ and \eqref{aug10.1}
holds immediately.  Otherwise,
\[ |U_t| \leq |X_t| + |X_0| + a \int_0^\rho \frac{|X_s|}
    {X_s^2 + Y_s^2} \, ds \leq (2+a) \, e^{al}. \]
This gives \eqref{aug10.1}.

 Let $A_j$ be the event
\[ A_j = \{t-j < \rho \leq t-j+1\} =
    \{e^{a(t-j)} < Y_{\tau}  \leq e^{a(t-j+1)} \}. \]
On the event ${A_j}$, we have
\[      Y_{\tau} \asymp e^{at} \, e^{-aj}, \;\;\;\;\;
       R_\tau^2 +1 \asymp e^{2aj} \, j^{-4} . \]
The Girsanov theorem
implies that
\[  \E[N_\tau 1_{A_j}] = N_0 \, \Prob^*(A_j)
  = (x^2 + 1)^{\frac 12} \,  \Prob^*(A_j)
 \asymp e^{am} \, \Prob^*(A_j)  , \]
where we use $\Prob^*$ to denote the probabilities given by
weighting by the martingale $N_t$.  We claim that there
exist $c,\beta$ such that
\begin{equation}  \label{girsanov}
 \Prob^*(A_j) \leq c \, j^\beta \, e^{(4a-1)(m-j)a}.
\end{equation}
To see this, one considers the process in the new
parametrization and notes that after weighting by the martingale
$\hat N$, $K_t$ satisfies
\begin{equation}  \label{aug10.4}
     dK_s = \left(1 - 2a \right) \, K_s \, ds
   + \sqrt{K_s^2 + 1} \, dW_s,
\end{equation}
where $W_s$ is a standard Brownian motion with $K_0 = x$.
Equivalently,
$K_s = \sinh J_s$ where $J_s$ satisfies
\begin{equation}  \label{aug10.5}
   dJ_s = -q\,
  \tanh J_s \, ds + dW_s,
\end{equation}
with $J_0 =  \sinh^{-1} x, q = \frac 12 - 2a$.
Standard techniques (see \cite[Section 7]{Lmulti})
show that $J_t$
is positive recurrent with invariant density
proportional to $[\cosh x]^{-2q}.$  If $ 0 < x < y$, then
the probability starting at $x$ of reaching $y$ before $0$
is bounded by $c \, [\cosh x/\cosh y]^{2q}.$   Using these
ideas, we get that for every $k$,
\[   \Prob^x\{ y \leq J_t \leq y+1 \mbox{ for some }
   k \leq t \leq k+1\} \leq c \, \left( \frac{\cosh x}{
  \cosh y} \right)^{2q}. \]

On the event $A_j$, we know that $Y_{t^2} \geq Y_\tau
\asymp e^{al} \, e^{-aj}.$
  The martingale property and \eqref{girsanov} imply that
\[ \E\left[N_{t^2} \, 1_{A_j}\right] =
  \E\left[N_\tau \, 1_{A_j} \right] = (x^2 + 1)^{\frac 12} \,  \Prob^*(A_j)
 \leq c \, e^{am} \, \left[1 \wedge
 j^{\beta}\, e^{(4a-1)(m-j)a}\right].\]
Therefore,
\begin{eqnarray*}
e^{-am} \, e^{al(2-d)}
\, \E\left[ |h_{t^2}'(z)|^{d} \, 1_{A_{j}}\right]
   & \leq & c\, e^{aj(2-d)} \, e^{-am} \,
     \E\left [N_{t^2} \, 1_{A_j} \right]\\
  & \leq & c  \, e^{aj(2-d)} \,\left[1 \wedge
 j^{\beta} \,  e^{(4a-1)(m-j)a}\right].
\end{eqnarray*}
\begin{eqnarray*}  \lefteqn{ e^{-am} \, e^{at(2-d)}
\, \E\left[ |h_{t^2}'(z)|^{d} \right]}\hspace{.5in}\\
  & \leq & c\sum_{j=1}^\infty e^{aj(2-d)}\,
 \left[1 \wedge
 j^{\beta} \,  e^{(4a-1)(m-j)a}\right] \\
 & \leq & c \left[
  \sum_{j=1}^m e^{aj(2-d)} +  e^{(4a-1)m}
\sum_{j=m+1}
^\infty  j^\beta \, e^{aj[(2-d) + 1-4a]}\right]\\
 & \leq & c \left[
   e^{am(2-d)} +  e^{(4a-1)m}
\sum_{j=m+1}
^\infty  j^\beta \, e^{aj[(2-d) + 1-4a]}\right]\\
 & \leq & c \, m^\beta \, e^{am(2-d)}. \end{eqnarray*}
The last inequality requires
\[ 2-d+ 1-4a  < 0, \]
which is readily checked for $a > 1/4$.
Therefore,
\begin{eqnarray*}
  \E\left[ |h_{t^2}'(z)|^{d} \right]
 &  \leq&  c \, t^{d-2} \, m^\beta \, e^{am(3-d)}
  \\& \leq & c \, t^{d-2} \, [\log(x^2 + 2)]^\beta \,
     (x^2 + 1)^{1 - \frac \kappa{16}}\\
 &\leq&  c \, t^{d-2} \, (x^2 + 1)^{\frac {\hat u}2}.
\end{eqnarray*}
This establishes \eqref{aug3.4}.

Note that
\[
\E\left[ N_\tau \, \left(R_\tau^2 + 1\right)^{\frac a 2
- \frac 38
  } \, Y_\tau^{\frac \zeta 2 + d-2}\, 1_{A_j} \right]
\asymp \hspace{1.5in} \]
\[ \hspace{1.5in}  j^{-2a + \frac 32} \, e^{aj(a - \frac 34)}
 \, e^{(l-j)a(\frac \zeta 2 + d - 2)} \,
  \E\left[N_\tau \, 1_{A_j}\right]. \]
Therefore,
\begin{eqnarray*}
  \lefteqn{ e^{-am} \, e^{-al(\frac \zeta 2 + d - 2)}
\, \E\left[ N_\tau \, \left(R_\tau^2 + 1\right)^{\frac a 2
- \frac 38
  } \, Y_\tau^{\frac \zeta 2 + d-2} \right]} \hspace{.4in} \\
& \leq &
   c \sum_{j=1}^{\infty} j^{-2a + \frac 32} \,
\, e^{aj(a - \frac 34)} \,  e^{-ja ( \frac{\zeta }{2}+ 2-d)}
 \, \left[1 \wedge
 j^{\beta}\, e^{(4a-1)(m-j)}\right] \\
& \leq & c \, m^{\beta} \, e^{ma(-\frac \zeta 2 + d-2+a -
\frac 3{4})} .
\end{eqnarray*}
The last inequality requires
 \[ -\frac{\zeta} 2+ 2-d  +a - \frac 34  + 1 - 4a < 0 . \]
Recalling that
\[   \frac{\zeta}{2} = a - \frac{3}{16a} - \frac 12, \]
this becomes
\[        -4a - \frac 1{16a} + \frac 7 4 < 0. \]
This is true if $\kappa < \kappa_0$, see \eqref{kappa}.

\end{proof}

\begin{proposition}  If $ a > a_0$,
for every $\epsilon > 0$
 there is a $c$ such
that the following is true.  Assume
\[  z =  x + i y, \;\;
  w =  \hat x + i \hat y \;\; \in \Half \]
with $\hat y \leq 2a + 1$ and
$|z| \geq 3(a+1)$.  Then for $t\geq 2 , s\geq 1$,
\begin{equation}  \label{jul27.1}
  F_{stD}(sz,w, (st)^2) \leq c \, t^{-\zeta}
  \, |z|^{\frac \zeta 2} \, [\sin \theta_z]
  ^{\frac \zeta 2 - \frac 14 - \frac 2 \kappa}
 \,
[\sin \theta_w]^{2-d- u}
  \,  (|w|/s)^{2-d}.
\end{equation}
\end{proposition}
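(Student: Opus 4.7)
My first step would be to apply the scaling relation $F_{rD}(rz,rw;r^2t_1,r^2t_2)=F_D(z,w;t_1,t_2)$ with $r=s$, which converts the claim into
\[
F_{tD}(z,w';t^2)\le c\,t^{-\zeta}\,|z|^{\zeta/2}\,[\sin\theta_z]^{\zeta/2-1/4-2/\kappa}\,[\sin\theta_{w'}]^{2-d-u}\,|w'|^{2-d},
\]
where $w'=w/s$ satisfies $y_{w'}\le (2a+1)/s\le 2a+1$, together with $|z|\ge 3(a+1)$ and $t\ge 2$. The key structural observation is that the $z$-factor matches the two-point second-moment bound of Lemma \ref{apr13.lemma1}, while the $w'$-factor matches the first-moment bound \eqref{firstmom}. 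This asymmetry dictates treating $z$ and $w'$ on different footings, rather than by Cauchy--Schwarz, since the symmetric bound in Lemma \ref{apr13.lemma1} produces $|w'|^{\zeta/2}$ where we need $|w'|^{2-d}$, which is essential for $\mu$-integrability in Proposition \ref{prop.aug1}.

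The main tool is the stopping time $\tau\le t^2$ from Lemma \ref{aug10.lemma1} applied to $z$. It enjoys $|U_r|\le (a+2)t$ on $[0,\tau]$, and its weighted-moment bound at $\tau$ already packages the desired $z$-decay $|z|^{2-d}[\sin\theta_z]^{2-d-u}$. The strong Markov property of the reverse Loewner flow at $\tau$ then decomposes $h_{t^2}'(\cdot)=h_\tau'(\cdot)\cdot\tilde h_{t^2-\tau}'(h_\tau(\cdot)-U_\tau)$, where $\tilde h$ is the flow driven by the independent shifted Brownian motion $U^*_r=U_{\tau+r}-U_\tau$, and analogously for $h_{t^2}(\cdot)-U_{t^2}$. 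Dropping the $w'$-indicator (it only helps) and conditioning on $\F_\tau$ gives
\[
F_{tD}(z,w';t^2)\le \E\!\left[|h_\tau'(z)|^d\,|h_\tau'(w')|^d\,\Phi(\tilde z_\tau,\tilde w'_\tau)\right],
\]
with $\tilde z_\tau=h_\tau(z)-U_\tau$, $\tilde w'_\tau=h_\tau(w')-U_\tau$, and $\Phi(z_0,w_0)=\E^*[|\tilde h_{t^2-\tau}'(z_0)|^d|\tilde h_{t^2-\tau}'(w_0)|^d\,1\{\tilde h_{t^2-\tau}(z_0)-U^*_{t^2-\tau}\in tD-U_\tau\}]$.

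To bound $\Phi$, the retained indicator forces the $z_0$-image into the bounded, bounded-away-from-$\R$ region $tD-U_\tau$ (using $|U_\tau|\le(a+2)t$ and $D\in\domain_m$), so Koebe distortion tames $|\tilde h'_{t^2-\tau}(z_0)|$ in terms of $t$ and $y_{z_0}$; the independent $w_0$-derivative is then controlled by the single-point first-moment estimate \eqref{aug3.4}. Substituting the resulting bound back into the outer expectation turns it into a weighted single-time moment at $\tau$, whose $z$-part is exactly the quantity controlled by Lemma \ref{aug10.lemma1} (the weights $Y_\tau(z)^{\zeta/2}$ and $(R_\tau(z)^2+1)^{1/8+1/\kappa}$ produced by $\Phi$ matching the weights in that lemma); this yields $t^{d-2+\zeta/2}|z|^{2-d}[\sin\theta_z]^{2-d-u}$. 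The $w'$-part is handled by distortion on $[0,\tau]$ (bounded driver) together with a dyadic decomposition of $Y_\tau(w')$, which collapses the $w_0$-estimate at $\tilde w'_\tau$ back into the first-moment form $|w'|^{2-d}[\sin\theta_{w'}]^{2-d-u}$. Combining and accounting for $t$-powers reproduces the target bound.

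The main obstacle is precisely this exponent bookkeeping: keeping the sharp $t^{-\zeta}$ rate requires the weight $(R_\tau(z)^2+1)^{1/8+1/\kappa}$ in Lemma \ref{aug10.lemma1} (dictated by the parameter choice $r=4/\kappa+1/2$ in \eqref{arbitrary}) to absorb exactly the $z$-dependence produced by $\Phi$, and the arithmetic that makes this work is the inequality \eqref{kappa} underlying $\kappa<\kappa_0$. In particular, both bounds on $u$ in \eqref{uvalue} are needed: the lower bound $u>7/4-\kappa/32$ to close the computation at $\tau$, and the upper bound $u<8/\kappa$ for $\mu$-integrability at the very end of Proposition \ref{prop.aug1}.
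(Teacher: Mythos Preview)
Your approach swaps the roles of $z$ and $w$ relative to the paper's proof, and this creates a genuine gap. The paper applies the stopping time $\tau$ of Lemma~\ref{aug10.lemma1} to the \emph{small} point $w$ at scale $1$ (so $\tau\le 1$ and $|U_r|\le a+2$ on $[0,\tau]$). Because $|z|\ge 3(a+1)$ is far from this bounded driver, the reverse flow on $[0,\tau]$ barely moves $z$: one gets $|h_\tau'(z)|\le e^a$ and $Z_\tau(z)\approx z$ deterministically. After conditioning on $\F_\tau$, the paper applies the two-point bound \eqref{apr11.1} to $F_{tD}(Z_\tau(z),Z_\tau(w),t^2-\tau)$; the $z$-slot reproduces the target factor $|z|^{\zeta/2}[\sin\theta_z]^{\zeta/2-1/4-2/\kappa}$ directly (since $Z_\tau(z)\approx z$), while the $w$-slot produces exactly the weights $Y_\tau(w)^{\zeta/2}(R_\tau(w)^2+1)^{1/8+1/\kappa}$ appearing in Lemma~\ref{aug10.lemma1}, whose expectation then gives the required $|w|^{2-d}[\sin\theta_w]^{2-d-u}$.

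Your reversal fails in two places. First, applying $\tau$ to $z$ at scale $t$ allows $\tau$ as large as $t^2$ and $|U_r|$ as large as $(a+2)t$; since $y_{w'}\le 2a+1$, the flow on $[0,\tau]$ moves $w'$ substantially, so neither distortion nor any deterministic bound controls $|h_\tau'(w')|$, and the $z$- and $w'$-parts at time $\tau$ remain genuinely correlated (your ``dyadic decomposition of $Y_\tau(w')$'' cannot decouple them, because $\tau$ is a stopping time tailored to $z$). Second, the weights $Y_\tau(z)^{\zeta/2}(R_\tau(z)^2+1)^{1/8+1/\kappa}$ you claim are ``produced by $\Phi$'' do not arise from your indicator-plus-distortion bound on $|\tilde h'(z_0)|^d$: that argument gives only the crude factor $(ct/Y_\tau(z))^d$, which is the wrong weight and loses the $t^{-\zeta}$ decay entirely. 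Those specific weights come from applying \eqref{apr11.1} in the second stage, which necessarily puts the second-moment-type factor on \emph{both} arguments---so using it forfeits the first-moment factor $|w'|^{2-d}$ on the $w'$-side. Moreover, even your stated output $t^{d-2+\zeta/2}|z|^{2-d}[\sin\theta_z]^{2-d-u}$ has the wrong $|z|$-exponent: $2-d>\zeta/2$ for $\kappa<\kappa_0$, so with $|z|$ unbounded in the supremum defining $v(w,s)$ this does not imply \eqref{jul27.1}. The asymmetry you correctly identified at the outset is achieved by freezing $z$ with a short-time stopping time for $w$, not the other way around.
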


\noindent {\bf Remark.}
If $|z|  \geq 3(a+1)s$ and $t \geq 2s$,  we can write
\eqref{jul27.1} as
\[  F_{stD}(z,w,t^2) \leq c \, (t/s)^{-\zeta}\, s^{d-2}\,
\, [\sin \theta_z]
  ^{\frac \zeta 2 - \frac 14 - \frac 2 \kappa}
 \,
[\sin \theta_w]^{2-d- u}
  \,  |w|^{2-d}.
\]
Therefore this proposition completes the proof
of \eqref{aug11.1}.

\begin{proof}

By scaling, $F_{stD}(sz,w,(st)^2) = F_{tD}(z,w/s,t^2)$;
hence, without loss of generality
 we may assume $s=1$.   We assume that
 $\tau$ is a stopping time as in the previous lemma
for $w$ and time $1$.  In particular, $0 \leq \tau \leq 1$.
We can find a domain $D'$ such that for all $1/2 \leq r \leq 1$,
$rD \subset D'$.

We write
$Z_s(z) = h_s(z) -U_s,  Z_s(w)
= h_s(w) - U_s$, etc. for the images under the flow.
By definition, $ F_{tD}(z,w,t^2) = \E[\Lambda]$
where    $\Lambda$
denotes the random variable
\[  \Lambda = \Lambda_D(z,w,t^2) :=
 \left|h_{t^2}'(z)\right|^d \, \left|h_{t^2}'(w)\right|^d
   \,\I_{t^2}(z,w;D), \]
and note that
\begin{equation}  \label{oct21.20}
 \E\left [ \Lambda \mid \G_\tau\right]
   =  |h_{\tau}'(z)|^{d} \, |h_{\tau}'(w)|^d
   \,    F_{tD}(Z_\tau(z),Z_\tau(w),t^2 - \tau).
\end{equation}

Since $|U_s| \leq 2+a$ for $s \leq \tau$, it
follows  from \eqref{reverse} that
\[   \p_s | h_s(z)|  \leq 1 , \;\;\;\;\;
   |h_s(z) - z| \leq s, \;\;\;\;
  s \leq \hat \tau, \]
where $\hat \tau$ denotes
  the minimum of $\tau $ and the
first time that $|h_s(z)| \leq 2 + 2a$.
Since $|z| \geq 3 + 3a$, this implies
the following estimates for $0 \leq s \leq \tau$:
\[  |h_s(z) - z| \leq 1, \;\;\;\;
   |h_s(z)| \geq 2 + 3a,
\;\;\; |U_s - h_s(z)| \geq a, \]
 \[  |Z_s(z) - z| \leq |h_s(z) - z| + |U_s| \leq 3 + a.
  \]
In particular, since $|z| \geq 3(1+a)$, there exists $c_1,c_2$
such that
\[  c_1\, (x^2 + 1) \leq X_s(z)^2 + 1
  \leq c_2 \, (x^2 + 1) , \;\;\;\; 0 \leq s \leq \tau, \]
\[    y_z \leq Y_s(z) \leq c_2 \, y_z, \;\;\;\; 0 \leq s \leq \tau.\]

We therefore get
\[   F_{tD}(Z_\tau(z),Z_\tau(w),t^2 - \tau) \leq
\sup
    F_{\tilde t D'}(\tilde z,Z_{\tau}(w), \tilde t^2) , \]
where the supremum is over all $t^2 -1 \leq \tilde t^2
 \leq t^2$ and all $\tilde z = \tilde x + i \tilde y$ with
\[           c_1
\, (x^2 + 1) \leq \tilde x^2 + 1 \leq c_2 \, (x^2+1) , \;\;\;\;
     y \leq
   \tilde y \leq c_2 \, y. \]
Using \eqref{apr11.1},  we get
\[  F_{tD}(Z_\tau(z),Z_\tau(w),t^2 - \tau)
  \leq \hspace{1.5in}\]
\[ \hspace{1.5in} c \, t^{-\zeta}
  \, |z|^{\frac \zeta 2} \,
 [\sin \theta_z]^{\frac \zeta 2-\frac 14 - \frac 2 \kappa
  } \, \left(R_\tau^2(w) + 1\right)^{\frac 1 8 + \frac
 1 \kappa
  }
 \,  {Y_\tau(w)} ^{ \frac \zeta 2}.\]
By differentiating \eqref{reverse}, we get
\[   |\p_s h_s'(z)| \leq a \, |h_s'(z)|,\;\; 0\leq s \leq \tau,\;\;\;\;
    |h_\tau'(z)| \leq e^a. \]
Therefore, plugging into \eqref{oct21.20}, we get
\[   \E\left[\Lambda \mid \G_\tau \right]  \leq
   c \, t^{-\zeta}
  \, |z|^{\frac \zeta 2} \,[\sin \theta_z]^{\frac \zeta 2-\frac 14 - \frac 2 \kappa
  } \,
|h_\tau'(w)|^{d} \,
\left(R_\tau^2(w) + 1\right)^{\frac 18 + \frac 1 \kappa
  } \,
 \, {Y_t(w)}  ^{ \frac \zeta 2}.\]
Taking expectations,
 and using the previous
lemma, we get
\[  F_{tD} (z,w,t^2)
 \leq
   c \, t^{-\zeta}
  \, |z|^{\frac \zeta 2}
 \,[\sin \theta_z]^{\frac \zeta 2-\frac 14 - \frac 2 \kappa
  } \,
\, |w|^{2-d} \, [\sin \theta_w]^{2-d-u}. \]

\end{proof}

\bibliographystyle{halpha}
\bibliography{scottfinal}

\end{document}